\newcommand{ \Rn} {\mathbb{R}^n}
\newcommand{ \Rm} {\mathbb{R}^m}
\newcommand{ \Ru} {\mathbb{R}}
\newcommand{ \rr} {\mathbb{R}}
\newcommand{\Om}{\Omega}
\newcommand{\eps}{\varepsilon}
\newcommand{\mA}{\mathcal{A}}
\newcommand{\scu}{\longrightarrow}
\newcommand{\lpl}[1]{L^{p}_{loc}(#1)}
\newcommand{\anso}[1]{W^{1,p}_X(#1)}
\newcommand{\ansol}[1]{W^{1,p}_{X,loc}(#1)}
\newcommand{\lul}{L^1_{loc}(\Om)}
\newcommand{\ch}{\overline{co}}
\newcommand{\sub}{\partial_{X,N}}
\newcommand{\winfx}[1]{W_X^{1,\infty}(#1)}
\newcommand{\hold}[1]{C_X^{0,\alpha}(#1)}
\newcommand{\holdl}[1]{C_{X,loc}^{0,\alpha}(#1)}
\newtheorem{thm}{Theorem}[section]
\newtheorem{prop}[thm]{Proposition}
\newtheorem{deff}[thm]{Definition}
\newtheorem{lem}[thm]{Lemma}
\newtheorem{cor}[thm]{Corollary}
\theoremstyle{definition}
\newtheorem*{rem}{Remark}
\newcommand{\average}{{\mathchoice {\kern1ex\vcenter{\hrule height.4pt
				width 6pt
				depth0pt} \kern-9.7pt} {\kern1ex\vcenter{\hrule height.4pt width 4.3pt
				depth0pt}
			\kern-7pt} {} {} }}
\newcommand{\ave}{\average\int}
\DeclareMathOperator{\Lip}{Lip}
\DeclareMathOperator{\diver}{div}
\DeclareMathOperator{\diverx}{div_X}   
\DeclareMathOperator{\lie}{Lie} 
\title[The asymptotic $p$-Poisson equation in Carnot-Carathéodory spaces]{The asymptotic $p$-Poisson equation as $p \to \infty$  in Carnot-Carathéodory spaces}
\author[L. Capogna]{Luca Capogna}
\address[Luca Capogna]{Department of Mathematical Sciences, Smith College, Northampton, MA 01060, USA}
\email[Luca Capogna]{lcapogna@smith.edu}\thanks{L.C. was partially supported by the National Science Foundation award  DMS1955992. G.G. was partially supported by INdAM-GNAMPA Project 2022 "Analisi geometrica in strutture subriemanniane". A.P. was partially supported by INdAM under the INdAM– GNAMPA Project 2022 "Problemi al bordo e applicazioni geometriche". A.P., G.G.  and S.V. were supported through funding from the University of Trento.}
\author[G. Giovannardi]{Gianmarco Giovannardi}
\address[Gianmarco Giovannardi]{Dipartimento di Matematica Informatica "U. Dini", Università degli Studi di Firenze, Viale Morgani 67/A, 50134, Firenze, Italy}
\email[Gianmarco Giovannardi]{gianmarco.giovannardi@unifi.it}
\author[A. Pinamonti]{Andrea Pinamonti}
\address[Andrea Pinamonti]{Department of Mathematics, University of Trento, Via Sommarive 14, 38123 Povo (Trento), Italy}
\email[Andrea Pinamonti]{andrea.pinamonti@unitn.it}
\author[S. Verzellesi]{Simone Verzellesi}
\address[Simone Verzellesi]{Department of Mathematics, University of Trento, Via Sommarive 14, 38123 Povo (Trento), Italy}
\email[Simone Verzellesi]{simone.verzellesi@unitn.it}
\numberwithin{equation}{section}
\thanks{Key words and phrases. Subelliptic $p-$Laplacian, Subelliptic $\infty-$Laplacian, Subelliptic $p-$Poisson equation. \\ MSC Classification: 35H20, 35D40, 35J92, 35J94.}
\begin{document}
\maketitle
 \dedicatory\centerline{In Memory of Emmanuele DiBenedetto }
\begin{abstract}
  In this paper we study the asymptotic behavior of solutions to the subelliptic $p$-Poisson equation as $p\to +\infty$ in Carnot Carath\'eodory spaces. In particular, introducing a suitable notion of differentiability, we extend the celebrated result of Bhattacharya,  DiBenedetto and Manfredi \cite{BDM} and we prove that limits of such solutions solve in the sense of viscosity a hybrid first and second order PDE involving the $\infty-$Laplacian and the Eikonal equation.
\end{abstract}

\section{Introduction}

The problem of finding the best possible Lipschitz extension of a given sample of a scalar function presents connections with  many fields of mathematics and  has several real-world applications. Although issues of existence of minimizers date back to the early 30's in the work of McShane and Whitney (see \cite{ACJ} and references therein for a detailed history), the work of Aronsson \cite{aronsson1, aronsson2} in the mid 60's represented truly a turning point, bringing a PDE point of view in the picture. 
A key novelty in Aronsson's approach was the notion of Absolutely Minimizing Lipschitz Extensition (AMLE): a Lipschitz function $u$ is an AMLE of its boundary datum on the boundary of an open set $\Omega\subset \mathbb R^n$ if for every subdomain $V\subset \Omega$ one has $\Lip(u,V)=\Lip(u,\partial V)$, where we have set $$\Lip(u,V)= \sup_{x\neq y, \ x,y\in V} \frac{u(x)-u(y)}{d(x,y)}.$$ This definition  in a sense characterizes a canonical optimal Lipschitz extension for Lipschitz boundary data, as it provides uniqueness. This notion is meaningful in every metric space, with no additional structure needed.  In the Euclidean case, uniqueness of AMLE was established by Jensen \cite{jensen1}. Following in the footprints of Aronsson, who had studied the $C^2$ case, Jensen proved that AMLE are viscosity solutions to the infinity Laplacian equation
\begin{equation}\label{inflapE}\Delta_\infty u:= \sum_{i,j=1}^n u_{ij} u_i u_j =0, \end{equation}
 along with a uniqueness theorem for such solutions. The infinity Laplacian operator arose from the work of Aronsson though a formal argument, based on $L^p$ approximation. Namely, for every $p>1$ Aronsson considered $C^2$ minimizers $u_p$ of the energy $\int_\Omega |\nabla u|^pdx$. These minimizers are $p-$harmonic, i.e.
 $\text{div} (|\nabla u_p|^{p-2} \nabla u_p)=0$. Taking the formal limit of this PDE as $p\to \infty$ one obtains \eqref{inflapE}. Since $p-$harmonic functions are not in general $C^2$,  it took several years to build a rigorous framework for Aronsson's  
 asymptotic approach. This was eventually accomplished thanks to the work of Bhattacharya,  DiBenedetto and Manfredi \cite[Propositions 2.1 and  2.2]{BDM}.

 In this paper we prove an extension of \cite[Propositions 2.1 and  2.2]{BDM}  to the non-Euclidean setting of Carnot-Carathéodory spaces and we also extend the non-homogenous case studied in \cite{BDM}.

Specifically, we are concerned with the asymptotic behavior, as $p\to \infty$, of vanishing trace critical points for the functionals
$$E_p(w, \Om) = \int_\Om \frac{1}{p}|Xw|^p dx - \int_{\Om} fw dx,$$
where $dx$ is the Lebesgue measure, $Xw$ denotes the horizontal gradient associated to a distribution $X=\{X_1,...,X_m\}$ of smooth vector fields satisfying H\"ormander's finite rank condition, that is
$$\text{dim} \ \text{Lie} (X_1,...,X_m)(x)=n,$$ for every point $x$ in a neighborhood of a bounded open set $\Om\subset \Rn$, and $f\in L^{p'}(\Om)$ is a given datum. In the rest of the paper, we will denote by $W^{1,p}_X$ (resp. $W^{1,p}_{X,0}$) the horizontal Sobolev spaces (resp. trace zero Sobolev spaces) associated to the frame $X_1,...,X_m$ (see \cite{RS}) and consider Lipschtiz and H\"older regularity with respect to the associated Carnot-Carathéodory control distance $d_\Omega$ (see Section 2). 

More specifically we consider
weak solutions  $u_p\in\anso{\Om}$ to the non-homogeneous boundary value  problem
\begin{equation}\label{main}
\begin{cases}
	\diverx(|Xu_p|^{p-2}Xu_p)=-f & \mbox{in } \Om, \\
 u_p=0 & \mbox{ in } \partial \Om.
\end{cases}
\end{equation}
In the homogenous case $f=0$ we will also consider non-zero Lipschtiz boundary values.
We will denote by  $\{ u_p\}_{p>1}$ the net of weak solutions to \eqref{main}. As in the Euclidean case, it is plausible to expect that its cluster point(s) $u_\infty$ solve an equation analogue to \eqref{inflapE} which is derived by \eqref{main} in the limit $p\to \infty$. A formal computation, in the special homogeneous case $f=0$, indicates that a likely candidate for such a limit is the $\infty-$Laplacian PDE 
\begin{equation}\label{inflap}
\Delta_{X,\infty} u_\infty =0,\end{equation}
where  $$\Delta_{X,\infty} u = \sum_{i,j=1}^m X_iX_j u X_iu X_j u=\sum_{i,j=1}^m \frac{X_iX_j u +X_j X_i u}{2} X_i u X_j u $$  denotes the subelliptic $\infty-$Laplacian. 

 Our main result in the homogenous case $f=0$ is the following
\begin{thm}\label{maint1} Let $g\in W^{1,\infty}_X(\Omega)$, and for each $p>1$ consider the  weak solution $u_p$ of the boundary value problem
\begin{equation}\label{homogeneousproblem}
\begin{cases}
\diverx(|Xu_p|^{p-2}Xu_p)=0 \ & \text{ in } \Omega
\\
u=g  &\text{ on } \partial \Omega 
\end{cases}
\end{equation}
    Every sequence $\{u_{p_k}\}$  of weak solutions to \eqref{homogeneousproblem} admits a subsequence converging locally uniformly on $\Om$ and weakly in $W_X^{1,m}(\Om)$, for any $m>1$, to a function  $u_\infty\in\winfx{\Om}\cap C(\Om)$  satisfying:
    \begin{enumerate} \item $\|Xu_\infty\|_\infty\leq\|Xg\|_\infty$.
     \item $u_\infty-g\in W^{1,p}_{X,0}(\Om)$ for any $p\in[1,\infty)$.
     \item $u_\infty-g\in C^{0,\alpha}_X(\Om)\cap C_0(\overline{\Om})$ for any $\alpha\in[0,1)$.
     \item If $g\in\winfx{\Om}\cap C(\overline\Om)$, then $u_\infty\in\winfx{\Om}\cap C(\overline\Om)$ and $u_\infty(x)=g(x)$ for any $x\in \partial \Om$.
     \item $u_\infty$ is a viscosity solution to \eqref{inflap}.
     \item $u_\infty$ is an AMLE.
    \end{enumerate}
\end{thm}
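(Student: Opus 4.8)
The plan is to run a compactness-and-stability argument in the spirit of \cite{BDM}, adapted to the Carnot--Carath\'eodory framework. \emph{First}, I would derive uniform a priori bounds: using $g$ itself as a competitor in the variational problem characterizing $u_p$ gives $\int_\Omega|Xu_p|^p\,dx\le\int_\Omega|Xg|^p\,dx\le|\Omega|\,\|Xg\|_\infty^p$, whence $\|Xu_p\|_{L^m(\Omega)}\le|\Omega|^{1/m}\|Xg\|_\infty$ for every $m<p$ by H\"older's inequality, and the subelliptic Poincar\'e inequality applied to $u_p-g\in W^{1,p}_{X,0}(\Omega)$ bounds $\{u_p\}$ in $W^{1,m}_X(\Omega)$ uniformly for $p>m$. \emph{Second}, a diagonal procedure over $m\in\mathbb{N}$ and over an exhaustion of $\Omega$ by compact sets, together with reflexivity of $W^{1,m}_X$ and the subelliptic Sobolev embedding $W^{1,m}_X\hookrightarrow C^{0,\alpha}_X$ for $m$ larger than the (local) homogeneous dimension $Q$ (which, through Arzel\`a--Ascoli, gives precompactness in $C_{\mathrm{loc}}(\Omega)$), extracts from the given sequence $\{u_{p_k}\}$, $p_k\to\infty$, a subsequence (still denoted $\{u_{p_k}\}$) converging locally uniformly on $\Omega$ and weakly in $W^{1,m}_X(\Omega)$ for every $m$, to a function $u_\infty\in C(\Omega)\cap W^{1,m}_{X,\mathrm{loc}}(\Omega)$. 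Weak lower semicontinuity of the norm gives $\|Xu_\infty\|_{L^m(\Omega)}\le|\Omega|^{1/m}\|Xg\|_\infty$, and sending $m\to\infty$ proves item (1) and shows $u_\infty\in W^{1,\infty}_X(\Omega)$.

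Items (2)--(4) are then soft consequences. Since $u_{p_k}-g\in W^{1,p_k}_{X,0}(\Omega)\subset W^{1,m}_{X,0}(\Omega)$ whenever $p_k>m$ and $W^{1,m}_{X,0}(\Omega)$ is weakly closed, $u_\infty-g\in W^{1,m}_{X,0}(\Omega)$ for every $m$, which is (2). For (3) I would extend $v:=u_\infty-g$ by zero outside $\Omega$, obtaining an element of $W^{1,m}_X(\mathbb{R}^n)$ for every $m$ with $\|Xv\|_\infty\le 2\|Xg\|_\infty$, and apply the subelliptic Morrey estimate on a bounded neighborhood of $\overline{\Omega}$ to get $v\in C^{0,1-Q/m}_X$ with $v\equiv0$ outside $\Omega$; letting $m\to\infty$ gives $v\in C^{0,\alpha}_X(\Omega)\cap C_0(\overline{\Omega})$ for every $\alpha\in[0,1)$. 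Writing $u_\infty=(u_\infty-g)+g$ and using $g\in C(\overline{\Omega})$ then yields (4).

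For item (5) one may assume $p$ large, so that $\diverx(|Xu|^{p-2}Xu)$ is non-singular. After recalling (or proving, by adapting the Euclidean equivalence to H\"ormander vector fields) that weak solutions of $\diverx(|Xu|^{p-2}Xu)=0$ are viscosity solutions, fix $\varphi\in C^2$ touching $u_\infty$ strictly from above at $x_0$. Local uniform convergence produces $x_k\to x_0$ at which $u_{p_k}-\varphi$ has a local maximum, and the viscosity subsolution inequality for $u_{p_k}$ at $x_k$, combined with the pointwise identity
\[
\diverx\!\left(|X\varphi|^{p-2}X\varphi\right)=|X\varphi|^{p-4}\Big(|X\varphi|^2\,\diverx(X\varphi)+(p-2)\,\Delta_{X,\infty}\varphi\Big),
\]
allows one to divide by $(p_k-2)|X\varphi(x_k)|^{p_k-4}$ (legitimate for large $k$ when $X\varphi(x_0)\neq0$) and pass to the limit, obtaining $\Delta_{X,\infty}\varphi(x_0)\ge0$; the case $X\varphi(x_0)=0$ is trivial since $\Delta_{X,\infty}\varphi$ is quadratic in $X\varphi$. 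The symmetric argument with test functions touching from below gives the reverse inequality, so $u_\infty$ solves \eqref{inflap} in the viscosity sense. This is the step I expect to be the main obstacle: beyond the bookkeeping of the non-divergence expansion in the presence of non-commuting fields and of the first-order term $\diverx(X\varphi)$, one must make sure the implication ``weak $\Rightarrow$ viscosity'' is available in this generality and treat the degeneracy set $\{X\varphi=0\}$ carefully.

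Finally, for item (6), given $V\Subset\Omega$ and a Lipschitz competitor $v$ with $v=u_\infty$ on $\partial V$, let $z_p$ be the $p$-harmonic function in $V$ with $z_p-u_\infty\in W^{1,p}_{X,0}(V)$. The comparison principle for the subelliptic $p$-Laplacian yields $\|z_{p_k}-u_{p_k}\|_{L^\infty(V)}\le\|u_\infty-u_{p_k}\|_{L^\infty(\partial V)}\to0$, so $z_{p_k}\to u_\infty$ uniformly on $V$, while minimality gives $\|Xz_{p_k}\|_{L^{p_k}(V)}\le|V|^{1/p_k}\|Xv\|_{L^\infty(V)}$; repeating the first step on $V$ produces $\|Xu_\infty\|_{L^\infty(V)}\le\|Xv\|_{L^\infty(V)}$. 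Combining this with the identification $\Lip_{d_\Omega}(w,V)=\|Xw\|_{L^\infty(V)}$ for $w\in W^{1,\infty}_X$, valid in Carnot--Carath\'eodory spaces, and choosing $v$ to be a McShane--Whitney extension of $u_\infty|_{\partial V}$, one concludes $\Lip_{d_\Omega}(u_\infty,V)=\Lip_{d_\Omega}(u_\infty,\partial V)$, i.e.\ $u_\infty$ is an AMLE; the only additional technical point here is precisely this metric identification of the horizontal Lipschitz constant with the sup norm of the horizontal gradient.
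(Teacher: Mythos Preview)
Your proposal is correct and follows essentially the same route as the paper's own proof in Section~5: the energy bound via comparison with $g$, H\"older and diagonal extraction for compactness and item~(1), weak closedness of $W^{1,m}_{X,0}$ for~(2), the Morrey embedding for~(3)--(4), the weak-to-viscosity implication (Proposition~\ref{weakvisc}) followed by maximum-point localization and division by $(p_k-2)|X\varphi(x_k)|^{p_k-4}$ for~(5), and comparison with auxiliary $p$-harmonic functions on $V\Subset\Omega$ for~(6). The only cosmetic differences are that the paper spells out~(2) via an explicit Mazur-lemma diagonal construction rather than simply invoking weak closedness, and proves~(6) directly in the supremal (gradient) formulation of absolute minimality, leaving the metric identification $\Lip_{d_\Omega}(\cdot,V)=\|X\cdot\|_{L^\infty(V)}$ that you flag as a technical point implicit.
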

In the case of the Heisenberg group, this theorem is due to  Bieske  \cite{bieskeuno}. 
Theorem \ref{maint1} can also be proved, more indirectly, by invoking results from three earlier papers \cite{wang,JS,DMV}, all of which draw from the geometric significance of equation \eqref{inflap} in the study of minimal Lipschtz extensions: in 2006,  Juutinen and Shanmugalingam \cite{JS}, studied the asymptotic limits as $p\to \infty$ of $p-$energy minimizers in the setting of metric measure spaces satisfying a doubling condition, a $p-$Poincar\`e inequalities and a {\it weak Fubini property}, proving that such limits are AMLE. In that paper, the notion of viscosity solution for the infinity Laplacian 
was substituted with the notions of comparison with cones and strongly Absolutely Minimizing Lipschitz Extensions (sAMLE), which they prove to be equivalent to AMLE. In the Carnot-Carathéodory setting the notion of sAMLE is equivalent to the notion of Absolutely Minimizing Gradient Extension (AMGS) (see \cite{DMV}, i.e. a Lipschitz function $u$ is an AMGS  of its boundary data in $\Omega$, if for every subdomain $U\subset \Omega$ and $v\in W^{1,\infty}_X(U)$ with $u-v\in W^{1,\infty}_{X,0}(U)$, one has $\| X u\|_{L^\infty(U)} \le \| X v\|_{L^\infty(U)}.$ In \cite{DMV}, Dragoni, Manfredi and Vittone prove that  Carnot-Carathéodory metrics satisfy the weak Fubini property and that AMGS is equivalent to sAMLE. Since the latter is equivalent to AMLE, it follows that the limits of $p-$energy minimizers $u_p$ as $p\to \infty$ converge to a function $u_\infty$ which is an AMGS. At this point one can invoke Wang's result  \cite{wang} (see also \cite{BC} in the case of Carnot groups), where it is proved that AMGS are viscosity solutions to \eqref{inflap}. 
By contrast, our proof is quite direct and it mirrors the strategy in \cite{BDM}. It also has the advantage of containing several technical steps upon which the non-homogeneous case rests. 
Before proceeding to the non-homogenous case, we want to note that the  properties of AMLE and comparison by cones are equivalent in every length space \cite{CDP}. In the presence of a weak Fubini property, they imply sAMLE. In the setting of Riemannian and subriemannian manifolds the latter agrees with AMGS and so it implies the property of being a viscosity solution to the $\infty-$Laplacian. The reverse implication follows from the uniqueness of solutions, and is known only for Carnot groups and Riemannian manifolds.  Further connections have been studied in the setting of doubling metric measure space that satisfy a weaker condition, the $\infty-$weak Fubini property (see \cite{DJS}).

\bigskip

In the general non-homogenous case $f\neq 0$, analogously to \cite{BDM}, one can prove that  $u_\infty$ solves a hybrid first and second order PDE in the viscosity sense.
Our main result is the following
\begin{thm}\label{maint2} If $f\in L^\infty(\Om)\cap C(\Om)$, and $f\ge 0$, then every sequence $\{u_{p_k}\}$  of weak solutions to \eqref{main} admits a subsequence converging uniformly on $\bar{\Om}$ and weakly in $W_X^{1,m}(\Om)$, for any $m>1$, to a function $u_\infty\in  Lip(\Om)\cap C(\bar{\Om})$ vanishing on the boundary.  Moreover, $u_\infty$ is a solution of 
    \begin{equation}\label{limitproblem}
    \begin{cases}
    \Delta_\infty u_\infty=0 & \text{on }\overline{\{f>0\}}^c, \\
        |Xu_\infty|= 1 & \text{on }\{f>0\},
    \end{cases}
	\end{equation}
 in the viscosity sense.
\end{thm}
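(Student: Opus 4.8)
The plan is to mirror the strategy of \cite{BDM}, building on the machinery already developed for the homogeneous case (Theorem \ref{maint1}). First I would establish the \emph{a priori estimates} needed to extract a limit. Fix $p$ large and test the weak formulation of \eqref{main} with $u_p$ itself; using the vanishing trace, H\"older's inequality and the subelliptic Poincar\'e inequality (valid for the frame $X$, see \cite{RS}) one obtains a bound of the form $\|Xu_p\|_{L^p(\Om)}^{p-1}\le C\|f\|_{L^{p'}(\Om)}\le C\|f\|_{L^\infty}|\Om|^{1/p'}$, hence $\|Xu_p\|_{L^p(\Om)}\le C$ with $C$ independent of $p$ (for $p$ large). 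By H\"older's inequality this gives, for each fixed $m$, a uniform bound on $\|Xu_p\|_{L^m(\Om)}$ as $p\to\infty$, and in fact, passing to the limit in the exponent, $\|Xu_\infty\|_{L^m(\Om)}\le C|\Om|^{1/m}$ for all $m$, so that $u_\infty\in\winfx{\Om}$ with $\|Xu_\infty\|_\infty\le C$. Combining the uniform $W^{1,m}_X$ bound with the compact embedding of horizontal Sobolev spaces into $C^{0,\alpha}$ with respect to $d_\Om$ (Morrey-type embedding in the Carnot--Carath\'eodory setting, for $m$ large) yields a subsequence $u_{p_k}$ converging locally uniformly on $\Om$, weakly in $W^{1,m}_X(\Om)$ for every $m>1$, to some $u_\infty\in\Lip(\Om)$; the uniform convergence up to $\overline\Om$ and the vanishing boundary values follow from the uniform H\"older bound together with the trace-zero condition, exactly as in the homogeneous analysis.

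Next I would identify the PDE satisfied by $u_\infty$ on the open set $\overline{\{f>0\}}^c$. On any subdomain $V$ compactly contained in $\overline{\{f>0\}}^c$ we have $f\equiv 0$, so each $u_{p_k}$ is subelliptically $p_k$-harmonic there; by the uniform convergence and the stability of viscosity solutions under uniform limits, together with the analysis already carried out for \eqref{inflap} in Theorem \ref{maint1} (items (5) there), the limit $u_\infty$ is a viscosity solution of $\Delta_{X,\infty}u_\infty=0$ on $\overline{\{f>0\}}^c$. I would spell this out via the usual test-function argument: if $\varphi$ touches $u_\infty$ from above at $x_0$, perturb to get strict touching, locate nearby touching points $x_k$ of $u_{p_k}-\varphi$, use that $u_{p_k}$ is a viscosity (equivalently weak, by the regularity theory for subelliptic $p$-Laplacians) supersolution of the $p_k$-Laplace equation at $x_k$, expand $\diverx(|X\varphi|^{p_k-2}X\varphi)$ and divide by $(p_k-2)|X\varphi|^{p_k-4}$, and pass to the limit to obtain $-\Delta_{X,\infty}\varphi(x_0)\ge 0$; the subsolution case is symmetric since $f=0$ there.

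The more delicate part is the behavior on $\{f>0\}$, where one must show $|Xu_\infty|=1$ in the viscosity sense. The lower bound $|Xu_\infty|\ge 1$ comes from testing: where $f>0$ the solutions $u_{p_k}$ cannot have small gradient on average, and a localized version of the energy estimate, combined with the strict positivity of $f$ on compact subsets of $\{f>0\}$, forces $\liminf_k\|Xu_{p_k}\|_{L^\infty}\ge 1$ locally; making this rigorous in the viscosity sense requires the test-function comparison argument of \cite{BDM} adapted to the sub-Riemannian operators, using that if $\varphi$ touches $u_\infty$ from below at $x_0\in\{f>0\}$ with $|X\varphi(x_0)|<1$, then for large $k$ the function $\varphi$ is a strict supersolution of the $p_k$-Laplace equation near $x_0$ (because $|X\varphi|^{p_k-2}\to 0$ there while $f$ stays bounded below by a positive constant), contradicting that $u_{p_k}-\varphi$ has an interior minimum. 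For the upper bound $|Xu_\infty|\le 1$ one uses the global Lipschitz bound: the uniform energy estimate refined on the whole domain gives $\|Xu_\infty\|_{L^\infty(\Om)}\le 1$ outright when $f\ge 0$ (this is the point where the sign condition and the normalization $\int f w$ matter — the constant in the estimate tends to exactly $1$), so that $|Xu_\infty|\le 1$ everywhere, and in particular on $\{f>0\}$. I expect the main obstacle to be precisely the sharp constant $1$ in these estimates: in the Euclidean case \cite{BDM} this rests on explicit comparison functions and on the interplay between the $L^{p'}$ norm of $f$ and the $L^p$ norm of $Xu_p$, and reproducing it here requires care that the subelliptic Poincar\'e and Morrey constants do not spoil the limit, which is why one works with the normalized functional $E_p$ and passes to the limit in the exponent along the lines of the homogeneous case before localizing.
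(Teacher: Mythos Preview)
Your outline captures the supersolution arguments correctly, but there is a genuine gap in the subsolution half of the Eikonal part, and it is precisely the point the paper identifies as its main technical contribution.

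You write that the global energy estimate yields $\|Xu_\infty\|_{L^\infty(\Om)}\le 1$ and conclude ``so that $|Xu_\infty|\le 1$ everywhere, and in particular on $\{f>0\}$.'' But the statement of the theorem asks for $|Xu_\infty|=1$ \emph{in the viscosity sense}, and an $L^\infty$ bound on $Xu_\infty$ is only an almost-everywhere inequality. Passing from ``$|Xu_\infty(x)|\le 1$ for a.e.\ $x$'' to ``$u_\infty$ is a viscosity subsolution of $|Xw|=1$'' is \emph{not} automatic in a Carnot--Carath\'eodory space: one needs to show that for every $C^1_X$ test function $\varphi$ touching $u_\infty$ from above at $x_0$, one has $|X\varphi(x_0)|\le 1$. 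In the Euclidean setting this follows from standard results (e.g.\ Barron--Jensen, or Soravia-type arguments), but in the horizontal setting there is no off-the-shelf result, and this is exactly what the paper's Section~3 (Theorem~\ref{aevhorm}) is built to do. The argument requires a notion of $X$-differential for $C^1_X$ functions (Proposition~\ref{differenziale}), the $(X,N)$-subgradient of \cite{PVW} to connect $Xu^+(x_0)$ with $\partial_{X,N}u(x_0)$ (Proposition~\ref{varisubdif}), and---since Proposition~\ref{differenziale} needs the linear-independence condition \eqref{lic} which the H\"ormander frame may fail---a Rothschild--Stein lifting step to reduce to the linearly independent case. None of this is visible in your sketch.

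A secondary point: your derivation of the bound $\|Xu_p\|_{L^p}\le C$ via Poincar\'e is not quite enough to get the \emph{sharp} constant $1$ in $\|Xu_\infty\|_\infty\le 1$, because the Poincar\'e constant depends on $p$. The paper instead uses the dual (Dirichlet-principle) characterization $E_p=\min\{\int_\Om |V|^{p'}:\ -\diverx V=f\}$ to show that $p\mapsto (|\Om|^{-1}E_p)^{(p-1)/p}$ is monotone decreasing (Lemma~\ref{lm:Epdec}); convergence of $E_p$ then forces $\lim_{p\to\infty}(E_p/|\Om|)^{1/p}=1$, which is exactly the sharp bound feeding into \eqref{eq:lipcost}. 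You correctly flag the sharp constant as the obstacle, but the mechanism you describe (``the constant in the estimate tends to exactly $1$'') is not the one the paper uses and would need justification.
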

In the Euclidean case, when $X_i=\partial_i$ and $m=n$, this  is a celebrated result due to Bhattacharaya, DiBenedetto and Manfredi \cite{BDM}.
To our knowledge, the present paper is the first extension of the results for the non-homogeneous problem in \cite{BDM} beyond the Euclidean setting.
One of the main challenges in this extension comes from the lack of linear structure and its role in the definition of viscosity solutions. Correspondingly, one of the key contributions of the paper is the study of differentiability, which is carried out in Section \ref{differentiability}. The main result of that section is Proposition \ref{differenziale}, which yields both the differentiability as well as an explicit form for the horizontal differential  ($X-$differential) of  suitably regular functions. Although in the proof of this result we need to assume the linear independence of the vector fields $X_1,...,X_m$, eventually when we apply this proposition later in the paper we will not need to do so, thanks to an argument reminiscent of the Rothschild-Stein lifting theorem \cite{RS}. We remark that our notion of differential in general lacks uniqueness, and can be used in a broader generality than   other notions of horizontal differentiability that have appeared in the subriemannian literature, such as the ones proposed by Pansu \cite{Pa} (for Carnot groups) and Margulis and Mostow \cite{MM} (for equiregular subriemannian structures).  However, in the presence of a Carnot group structure, our notion of differentiability agrees with Pansu's, whenever the $X-$differential commutes with the group operation and the intrinsic dilations. 
Another important feature of the paper is the study of the relationship between almost everywhere subsolutions and viscosity subsolutions to suitable first-order PDE, which is carried out in Section 3. Namely, exploiting the differentiability properties discussed in Section \ref{differentiability} and the notion of $(X,N)$-subgradient introduced in \cite{PVW} (cf. Section \ref{subgradient}), in Theorem \ref{aevhorm} we prove that in the setting of H\"ormander vector fields any almost everywhere subsolution to a first-horder PDE is a viscosity subsolution, provided that the associated Hamiltonian is quasiconvex in the gradient argument. We refer to \cite{BCD,So} for similar results in the Euclidean setting and in Carnot-Caratéodory spaces respectively. This result, although fundamental in the development of the paper, might be of independent interest.

\begin{rem} We note that  the property of being a (viscosity)  solution of either PDE in the mixed problem \eqref{limitproblem} could be separately be expressed in the setting of metric measure spaces: for the first order PDE see \cite{LSZ},  while for the infinity Laplacian one could use comparison by cones or AMLE, or (with a Fubini property hypothesis) sAMLE. One could then pose the question whether the conclusions of Theorem \ref{main} could continue to hold in the setting of PI spaces satisfying a weak Fubini property. Unfortunately,  in our proof of the convergence for the non-homogeneous case $f\neq 0$ we use in a crucial way the differential structure associated to the H\"ormander vector fields. More specifically, we rely on the non-divergence form formulation of \eqref{main}, which is not allowed in a general metric measure space, even with the additional hypotheses of doubling and Poincar\'e inequality. 
\end{rem}

\begin{rem}
It is interesting to note that in Theorem \ref{maint1} we do not require any regularity of the boundary of the domain. While this is sufficient to guarantee global Lipschitz continuity of $u_\infty$, there is no parallel regularity theory for $p-$harmonic functions. Indeed, even the case $p=2$ is quite involved and boundary regularity may fail even for smooth domains, in connection with their characteristic points (see \cite{Jer82}).
\end{rem}

The structure of the paper is the following: In Section \ref{prelims} we introduce the main geometric hypotheses on the structure of the spaces we will work with, the Carnot-Carathéodory spaces, with their control metric. We also recall some elements of analysis and potential theory in this setting, and discuss the issue of horizontal differentiability (see subsection \ref{differentiability}). Finally, we recall the notion of viscosity solutions for first and second-order PDE and the ones of supremal functional and absolute minimizer.
In Section 3 we study the relationship between almost everywhere and viscosity subsolution to first-order quasiconvex PDE, and we prove the aforementioned Theorem \ref{aevhorm}. It is in this theorem that we need the notion of $X-$differential and the H\"ormander finite rank condition hypothesis. The proof of the theorem is partially based on the lifting process introduced by Rothschild and Stein in \cite{RS}.  In Section 4 we turn our attention to the weak solutions to the $p-$Poisson equation and prove that they are also viscosity solutions (see also \cite{bieskeuno} and subsequent work of Bieske for earlier instances of this result in the setting of the Heisenberg group and Carnot groups). In the last two sections we study the limiting problems as $p\to \infty$ in the homogeneous and in the non-homogeneous regimes, proving Theorems \ref{maint1} and \ref{maint2}.
Some of our results continue to hold in a setting where the H\"ormander condition does not hold, but where one still has a well defined control metric. The appendix
provides a concrete example of a space satisfying the needed hypotheses.\\

\noindent{\it Acknowledgments:} The authors are grateful to Nages Shanmugalingam for many useful conversations and suggestions.

\section{Preliminaries}\label{prelims}
Unless otherwise specified, we let $m,n\in\mathbb{N}\setminus\{0\}$ with $m\leq n$, we denote by $\Om$ a bounded domain of $\Rn$ and by $\mA$ the class of all open subsets of $\Om$. Given two open sets $A$ and $B$, we write $A\Subset B$ whenever $\overline{A}\subseteq B$. We let $USC(\Om)$ and $LSC(\Om)$ be respectively the sets of upper semicontinuous and lower semicontinuous functions on $\Om$, and we denote by $C_0(\overline\Om)$ the set of continuous functions on $\overline\Om$ which vanish on $\partial\Om$.
For any $u,v\in\Rn$, we denote by $\langle u,v\rangle $ the Euclidean scalar product, and by $|v|$ the induced norm. We let $S^m$ be the class of all $m\times m$ symmetric matrices with real coefficients. We denote by $\mathcal{L}^n$ the restriction to $\Om$ of the $n$-th dimensional Lebesgue measure, and for any set $E\subseteq U$ we write $|E|:=\mathcal{L}^n(E)$. If $a<b$,
we denote by $AC((a,b),\Om)$ the set of absolutely continuous curves from $(a,b)$ to $\Om$.
\begin{comment}
    and $\gamma:[a,b]\scu\Om$ is an absolutely continuous curve, we denote its length by $\textit{l}(\gamma):=\int_a^b|\Dot{\gamma}(t)|dt$.
\end{comment}
Given $x\in\Rn$ and $R>0$ we let $B_R(x):=\{y\in\Rn\,:\,|x-y|<R\}$. Moreover, if $d$ is a distance on $\Om$ we let $B_R(x,d):=\{y\in\Om\,:\,d(x,y)<R\}$. 
If we have a function $g\in\lul$ and $x\in \Om$ is a Lebesgue point of $g$, when we write $g(x)$ we always mean that $$g(x)=\lim_{r\to 0^+}\ave_{B_r(0)}g(y)dy.$$
If $f(x,s,p)$ is a regular function defined on $\Om\times\Ru\times\Rm$, we denote by $D_xf=(D_{x_1}f,\ldots,D_{x_n}f)$, $D_sf$ and $D_pf=(D_{p_1}f,\ldots,D_{p_m}f)$ the partial gradients of $f$ with respect to the variables $x,s$ and $p$ respectively. In general we handle gradients as row vectors.

\subsection{Carnot-Carathéodory spaces}
Given a family $X=(X_1\ldots,X_m)$ of 
smooth vector fields defined in an open set $\Om\subseteq \mathbb{R}^n$, that is
\begin{equation*}
    X_j:=\sum_{i=1}^n c_{j,i}\frac{\partial}{\partial x_i}
    \end{equation*}
with $c_{ij}\in C^{\infty}(\Omega)$,
we denote by $C(x)$ the $m\times n$ matrix defined by \begin{equation}\label{coefficients}
C(x):=[c_{j,i}(x)]_{\substack{{i=1,\dots,n}\\{j=1,\dots,
m}}}
\end{equation}
and we call it the \emph{coefficient matrix of} $X$.
\begin{comment}
Moreover, we set 	$$C:=\max_{i,j}\|c_{j,i}\|_{\infty,\Om}.$$
	Since the vector fields are Lipschitz on $\Om$, it follows that $C<\infty$.
\end{comment}
If $u\in\lul$, we define the distributional $X$-gradient (or \emph{horizontal gradient}) of $u$ by 
\begin{equation*}
    \langle Xu,\varphi\rangle:=-\int_{\Omega} u \diverx (\varphi)dx\qquad\text{ for any }\varphi\in C^\infty_c(\Om,\Rm),
\end{equation*}
where the \emph{$X$-divergence} $\diverx$ is defined by
\begin{equation*}
    \diverx(\varphi):=\diver (\varphi\cdot C(x))
\end{equation*}
for any $\varphi\in C^1(\Om,\Rm)$.
Given $k\geq 1$, we define the \emph{horizontal} $C_{X}^k(\Omega)$ space by
\begin{equation*}
    C^k_X(\Om):=\{u\in C(\Om)\,:\,X_{i_1}\cdots X_{i_s}u\in C(\Om)\text{ for any $(i_1,\ldots,i_s)\in\{1,\ldots,m\}^s$ and $1\leq s\leq k$} \}.
\end{equation*}
Therefore, whenever we have a function $u\in C^2_X(\Om)$, we can define its \emph{horizontal Hessian} $X^2 u\in C(\Om,S^m)$ by
\begin{equation*}
    X^2 u(x)_{ij}:=\frac{X_iX_j u(x)+X_jX_i u(x)}{2}
\end{equation*}
for any $x\in \Om$ and $i=1,\ldots,n$, $j=1,\ldots,m$.
We extend the operator $\diverx$ to $C^1_X(\Om,\Rm)$ by setting
\begin{equation*}
    \diverx(\varphi):=\sum_{j=1}^m X_j\varphi_j+\sum_{j=1}^m\sum_{i=1}^n\varphi_j\frac{\partial c_{j,i}}{\partial x_i}
\end{equation*}
for any $\varphi=(\varphi_1,\ldots,\varphi_m)\in C^1_X(\Om,\Rm)$, and for a given function $u\in C^2_X(\Om)$ we define the \emph{$X$-Laplacian} of $u$ by
\begin{equation}\label{xlap}
    \Delta_X u:=\diverx (Xu)=\sum_{j=1}^m X_jX_ju+\sum_{j=1}^m\sum_{i=1}^nX_ju\frac{\partial c_{j,i}}{\partial x_i}.
\end{equation}
Finally, if $p\in[1,+\infty]$, we define the \emph{horizontal Sobolev spaces} by
	$$\anso{\Om}:=\{u\in L^p(\Om)\,:\,Xu\in L^p(\Om,\Rm)\},$$

	$$\ansol{\Om}:=\{u\in\lpl{\Om}\,:\,u|_{V}\in\anso{V},\quad\text{ for all}\, V\Subset \Om\}$$
 and
 \begin{equation*}
     W^{1,p}_{X,0}(\Om):=\overline{C^\infty_c(\Om)}^{\|\cdot\|_{\anso{\Om}}},
 \end{equation*}
 where
 \begin{equation*}
     \|u\|_{\anso{\Om}}:=\|u\|_{L^p(\Om)}+\|Xu\|_{L^p(\Om)}.
 \end{equation*}

 Moreover, when $g\in\anso{\Om}$, we let 
 \begin{equation*}
     W^{1,p}_{X,g}(\Om):=\{u\in\anso{\Om}\,:\,u-g\in W^{1,p}_{X,0}(\Om)\}.
 \end{equation*}
The following result is proved in \cite{FS}.
\begin{prop}
$(W_X^{1,p}(\Omega),\|\cdot\|_{\anso{\Om}})$ is a Banach space, reflexive if $1<p<\infty$.
\end{prop}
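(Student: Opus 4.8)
The plan is to identify $W^{1,p}_X(\Om)$ isometrically with a closed linear subspace of a finite product of Lebesgue spaces, and then to inherit both completeness and (for $1<p<\infty$) reflexivity from the corresponding properties of $L^p$. First I would record that $\|\cdot\|_{\anso{\Om}}$ is genuinely a norm: homogeneity and the triangle inequality follow at once from those of the $L^p$-norms, and $\|u\|_{\anso{\Om}}=0$ forces $\|u\|_{L^p(\Om)}=0$, hence $u=0$.

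Next I would introduce the linear map
\[
T\colon W^{1,p}_X(\Om)\longrightarrow L^p(\Om)\times L^p(\Om,\Rm),\qquad Tu:=(u,Xu),
\]
where the target carries the norm $\|(f,g)\|:=\|f\|_{L^p(\Om)}+\|g\|_{L^p(\Om)}$. By the very definition of $\|\cdot\|_{\anso{\Om}}$, the map $T$ is a linear isometry onto its image $\mathcal R:=T(W^{1,p}_X(\Om))$. The product $L^p(\Om)\times L^p(\Om,\Rm)$ is a Banach space, and it is reflexive when $1<p<\infty$ because $L^p$ is reflexive in that range and a finite product of reflexive spaces is reflexive. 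Since a closed linear subspace of a Banach (resp.\ reflexive) space is Banach (resp.\ reflexive), and since $T^{-1}\colon\mathcal R\to W^{1,p}_X(\Om)$ is an isometric isomorphism, the proposition will follow once I show that $\mathcal R$ is closed.

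To see that $\mathcal R$ is closed, let $(u_k)_k\subset W^{1,p}_X(\Om)$ be such that $(u_k,Xu_k)\to(u,v)$ in $L^p(\Om)\times L^p(\Om,\Rm)$; I must check that $v=Xu$ in the distributional sense, so that $(u,v)=Tu\in\mathcal R$. Fix $\varphi\in C^\infty_c(\Om,\Rm)$. Then $\varphi$ and $\diverx\varphi$ are continuous with compact support, hence belong to $L^{p'}(\Om,\Rm)$ and $L^{p'}(\Om)$ respectively, so by H\"older's inequality
\[
\int_{\Om}Xu_k\cdot\varphi\,dx\to\int_{\Om}v\cdot\varphi\,dx,\qquad
\int_{\Om}u_k\,\diverx\varphi\,dx\to\int_{\Om}u\,\diverx\varphi\,dx .
\]
On the other hand, the definition of the distributional $X$-gradient gives $\int_{\Om}Xu_k\cdot\varphi\,dx=-\int_{\Om}u_k\,\diverx\varphi\,dx$ for every $k$. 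Letting $k\to\infty$ yields $\int_{\Om}v\cdot\varphi\,dx=-\int_{\Om}u\,\diverx\varphi\,dx$ for all such $\varphi$, which is precisely the assertion $Xu=v$. Hence $u\in W^{1,p}_X(\Om)$ and $(u,v)\in\mathcal R$.

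I do not expect a serious obstacle: the one step that needs a little care is the identification of the distributional limit $v=Xu$, which hinges on the continuity of the $L^p$--$L^{p'}$ pairing and on the fact that test functions together with their $X$-divergences lie in $L^{p'}$. For the endpoint $p=\infty$, where only completeness is claimed, the same argument applies verbatim using the $L^\infty$--$L^1$ pairing against $\varphi\in C^\infty_c(\Om,\Rm)$; reflexivity genuinely fails for $p\in\{1,\infty\}$, consistently with the statement.
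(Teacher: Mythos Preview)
Your proof is correct and is the standard argument for this fact. The paper does not actually supply a proof of this proposition; it simply refers to \cite{FS}, and the approach you give---identifying $W^{1,p}_X(\Om)$ with a closed subspace of $L^p(\Om)\times L^p(\Om,\Rm)$ via $u\mapsto(u,Xu)$ and inheriting completeness and reflexivity---is precisely the classical route one finds there.
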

In analogy with the Euclidean setting, proceeding as in the proof of \cite[Theorem 10.41]{leoni}, it is easy to get the following Riesz-type Theorem.
\begin{prop}\label{riesz}
Let $1\leq p<\infty$, and let $(u_h)_h\subseteq\anso{\Om}$ and $u\in\anso{\Om}$. The following conditions are equivalent.
\begin{itemize}
    \item [$(i)$] $u_h\rightharpoonup u$ in $\anso{\Om}$.
    \item [$(ii)$] For $1/p'+1/p=1$ and for any $(g_0,\ldots,g_m)\in (L^{p'}(\Om))^{m+1}$ it holds that 
    \begin{equation*}
        \lim_{h\to\infty}\left(\int_\Om u_h\cdot g_0\, dx+\sum_{j=1}^m\int_{\Om}X_ju_h\cdot g_j\, dx\right)=\int_\Om u\cdot g_0\, dx+\sum_{j=1}^m\int_{\Om}X_ju\cdot g_j\, dx.
    \end{equation*}
\end{itemize}
\end{prop}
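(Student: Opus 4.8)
The plan is to run the standard functional-analytic argument used for classical Sobolev spaces (see \cite[Theorem 10.41]{leoni}), realizing $\anso{\Om}$ isometrically as a closed subspace of a product of Lebesgue spaces and then pulling the $L^p$--$L^{p'}$ Riesz representation theorem back through this identification. First I would equip $E:=\lp{\Om}\times\lp{\Om,\Rm}$ with the norm $\|(v_0,w)\|_E:=\|v_0\|_{\lp{\Om}}+\|w\|_{\lp{\Om,\Rm}}$, and consider the linear map $T\colon\anso{\Om}\scu E$ given by $Tu:=(u,Xu)$. By the very definition of $\|\cdot\|_{\anso{\Om}}$ one has $\|Tu\|_E=\|u\|_{\anso{\Om}}$, so $T$ is a linear isometry; since $\anso{\Om}$ is a Banach space, the image $V:=T(\anso{\Om})$ is complete, hence closed in $E$, and $T$ is an isometric isomorphism of $\anso{\Om}$ onto $V$.

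Next I would record the dual of $E$: since $\Om$ is bounded, $(\Om,\mathcal{L}^n)$ is a finite — in particular $\sigma$-finite — measure space, so for every $p\in[1,\infty)$ the duality $(\lp{\Om})^*\cong L^{p'}(\Om)$ holds, and consequently every $L\in E^*$ has the form
\[
L(v_0,w)=\int_\Om v_0\,g_0\,dx+\sum_{j=1}^m\int_\Om w_j\,g_j\,dx
\]
for a unique tuple $(g_0,g_1,\ldots,g_m)\in (L^{p'}(\Om))^{m+1}$, and conversely every such tuple induces an element of $E^*$.

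With these preliminaries in place, the two implications would follow at once. For $(i)\Rightarrow(ii)$, given $(g_0,\ldots,g_m)\in (L^{p'}(\Om))^{m+1}$ I would let $L\in E^*$ be the associated functional, note that $L\circ T\in(\anso{\Om})^*$, and obtain $(ii)$ by testing the weak convergence $u_h\rightharpoonup u$ against $L\circ T$. For $(ii)\Rightarrow(i)$, given an arbitrary $\Lambda\in(\anso{\Om})^*$, I would use that $T$ is an isometric isomorphism onto the closed subspace $V$ to view $\Lambda\circ T^{-1}$ as a bounded linear functional on $V$, extend it by Hahn--Banach to some $\widetilde L\in E^*$, and represent $\widetilde L$ by a tuple $(g_0,\ldots,g_m)\in (L^{p'}(\Om))^{m+1}$ as above; then for every $h$
\[
\Lambda(u_h)=\widetilde L(Tu_h)=\int_\Om u_h\,g_0\,dx+\sum_{j=1}^m\int_\Om X_ju_h\,g_j\,dx,
\]
and the right-hand side converges by $(ii)$ to $\int_\Om u\,g_0\,dx+\sum_{j=1}^m\int_\Om X_ju\,g_j\,dx=\widetilde L(Tu)=\Lambda(u)$; since $\Lambda$ was arbitrary, this is the weak convergence $u_h\rightharpoonup u$ in $\anso{\Om}$.

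The argument is entirely soft and I do not anticipate a genuine obstacle. The only two points deserving a word of care are the closedness of the range of the isometry $T$ — which is precisely where completeness of $\anso{\Om}$ is used — and the applicability of the $L^1$--$L^\infty$ duality in the borderline case $p=1$, which is guaranteed by the finiteness of $\Om$. In essence, being isometric to a closed subspace of $\lp{\Om}\times\lp{\Om,\Rm}$, the space $\anso{\Om}$ inherits the description of its weak topology from that of the ambient $L^p$-product.
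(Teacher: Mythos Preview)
Your proposal is correct and follows exactly the route the paper indicates: the paper does not spell out a proof but simply refers to \cite[Theorem 10.41]{leoni}, and your argument is precisely the standard isometric-embedding-into-$L^p\times L^p(\Om,\Rm)$ plus Hahn--Banach proof from that reference, adapted verbatim to the horizontal Sobolev setting.
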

If $\gamma:[0,T]\scu \Om$ is an absolutely continuous curve, we say that it is \emph{horizontal} when there are measurable functions $a_1,\ldots,a_m:[0,T] \scu \mathbb{R}$ such that
\begin{equation}\label{horiz}
    \Dot{\gamma}(t)=\sum_{j=1}^ma_j(t)X_j(\gamma(t))\qquad\text{ for a.e. }t\in[0,T],
\end{equation}
and we say that it is \emph{sub-unit} whenever it is horizontal with $\sum_{j=1}^m a_j^2(t)\leq 1$ for a.e. $t\in[0,T]$.
Moreover, we define the \emph{Carnot-Carathéodory distance} on $\Om$ by
\begin{equation*}
    d_\Om(x,y):=\inf\{T\,:\,\gamma:[0,T]\scu \Om\text{ is sub-unit, $\gamma(0)=x$ and $\gamma(T)=y$}\}.
\end{equation*}
If $d_\Om$ is a distance on $\Om$, then $(\Om,d_\Om)$ is called a \emph{Carnot-Carathéodory space}.
An equivalent definition of the Carnot-Carathèodory distance (see \cite{nagel}) is given by
	$$d_\Om(x,y)=\inf\left\{\left(\int_0^1|a(t)|^2dt\right)^{\frac{1}{2}}\,:\,\gamma:[0,1]\scu\Om\text{ is horizontal, }\gamma(0)=x\text{ and }\gamma(1)=y\right\},$$
 where $a(t)=(a_1(t),\ldots ,a_m(t))$ is as in \eqref{horiz}.

We say that the smooth distribution $X=(X_1,...,X_m)$ satisfies the \emph{H\"ormander condition} on $\Om$ if 
\begin{equation}\label{hormander}
\text{dim} \ \text{Lie} (X_1,...,X_m)(x)=n
\qquad\text{ for any }x\in \Om. 
\end{equation}
From \cite{gromov,nagel} one has the following result.
\begin{prop}\label{horm}
If $X$ satisfies \eqref{hormander} on $\Om$, then the following properties hold:
\begin{itemize}
    \item[(i)] $(\Om,d_\Om)$ is a Carnot-Carathéodory space.
    \item[(ii)] For any domain $\tilde\Om\subseteq \Om$ there exists a positive constant $C_{\tilde\Om}$ such that 
    \begin{equation*}
        C_{\tilde\Om}^{-1}|x-y|\leq d_\Om(x,y)\leq C_{\tilde\Om}|x-y|^{\frac{1}{r}}\qquad\text{ for any }x,y\in \tilde\Om,
    \end{equation*}
    where $r$ denotes the nilpotency step of $\lie(X_1,\ldots,X_m)$.
\end{itemize}
\end{prop}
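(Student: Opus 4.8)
I would split the statement into three independent parts. Part $(i)$ is the Chow--Rashevskii connectivity theorem, after which symmetry and the triangle inequality of $d_\Om$ are immediate from reversing and concatenating sub-unit curves, while non-degeneracy ($d_\Om(x,y)=0\Rightarrow x=y$) follows from the lower bound in $(ii)$; so establishing finiteness of $d_\Om$ together with that lower bound gives $(i)$. In $(ii)$, the lower bound is elementary and the upper bound is the ball--box estimate. I would prove $(ii)$ for $\tilde\Om\Subset\Om$ (the case actually used later), relying throughout on the freedom to pass between sub-unit and horizontal parametrizations via the equivalent $L^2$ definition of $d_\Om$ recalled above.

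\smallskip
\noindent\textbf{Connectivity.} For $x\in\Om$ let $\mathcal R(x)$ denote the set of points of $\Om$ reachable from $x$ by a horizontal curve staying in $\Om$. Reversal and concatenation make reachability an equivalence relation on $\Om$, so it suffices to show its classes are open. Fix $x$; by \eqref{hormander} I can choose iterated brackets $Y_1,\dots,Y_n$ of $X_1,\dots,X_m$, of bracket-lengths $\ell_1,\dots,\ell_n\le r$, that are linearly independent at $x$ and hence on a neighbourhood of $x$. Writing $\phi^Z_s$ for the local flow of a vector field $Z$, the map $\Phi(t):=\phi^{Y_n}_{t_n}\circ\cdots\circ\phi^{Y_1}_{t_1}(x)$ has $\Phi(0)=x$ and differential $D\Phi(0)=\big(Y_1(x)\,\big|\,\cdots\,\big|\,Y_n(x)\big)$, hence is a local diffeomorphism near $0$ with image a neighbourhood of $x$. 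Moreover each $\phi^{Y_i}_{t_i}$ (for either sign of $t_i$) is a limit of horizontal displacements: the flow $\phi^{[X_i,X_j]}_{s}$ is the uniform limit of the flow-commutators $\phi^{X_j}_{-\tau}\circ\phi^{X_i}_{-\tau}\circ\phi^{X_j}_{\tau}\circ\phi^{X_i}_{\tau}$ at $\tau=\sqrt{|s|}$ (negative times realized by running horizontal curves backwards), and this identity iterates along the bracket length. A standard end-point correction --- composing with a map of higher order and iterating, or a Brouwer-degree argument --- upgrades ``limit of horizontal displacements'' to ``attained by a horizontal displacement'', so $\Phi(t)\in\mathcal R(x)$ and each class is open. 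Since $\Om$ is connected, $\mathcal R(x)=\Om$, so $d_\Om$ is finite everywhere and $(i)$ follows once the lower bound below is in hand.

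\smallskip
\noindent\textbf{The two estimates.} For the lower bound, put $M:=\sup_{\overline{\tilde\Om}}\big(\sum_{j=1}^m|X_j|^2\big)^{1/2}<+\infty$ (finite since $\overline{\tilde\Om}$ is compact and the $c_{j,i}$ are smooth). If $\gamma:[0,T]\scu\tilde\Om$ is sub-unit with $\dot\gamma=\sum_j a_jX_j(\gamma)$, $\sum_j a_j^2\le1$, then $|\dot\gamma(t)|\le M$ by Cauchy--Schwarz, so $|x-y|\le\int_0^T|\dot\gamma|\le MT$ and $d_\Om(x,y)\ge M^{-1}|x-y|$. For the upper bound I would localize: at a fixed $x_0\in\overline{\tilde\Om}$ take brackets $Y_1,\dots,Y_n$ of lengths $\ell_i\le r$ independent near $x_0$, so the associated $\Phi$ is a diffeomorphism of $\{|t|<\delta\}$ onto a neighbourhood of $x_0$ and every nearby $y$ is $\Phi(t)$ with $|t|\lesssim|y-x_0|$. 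The crucial quantitative input is that the flow $\phi^Y_s$ of a length-$\ell$ bracket can be shadowed, uniformly on a neighbourhood, by a sub-unit curve of Carnot--Carathéodory length $\le C|s|^{1/\ell}$ for $|s|\le1$; the exponent $1/2$ for $\ell=2$ is exactly the flow-commutator identity, and $1/\ell$ follows by induction on $\ell$. Concatenating the $n$ sub-unit pieces realizing $\phi^{Y_1}_{t_1},\dots,\phi^{Y_n}_{t_n}$ --- with an end-point correction so the concatenation reaches $y$ exactly --- gives $d_\Om(x_0,y)\le C\sum_i|t_i|^{1/\ell_i}\le C'|y-x_0|^{1/r}$ for $|y-x_0|\le\rho(x_0)\le1$. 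Compactness of $\overline{\tilde\Om}$ makes $\rho(x_0)$ and the constant uniform, say $\rho$ and $C''$; finally $D:=\sup_{x,y\in\tilde\Om}d_\Om(x,y)<+\infty$ by $(i)$ (cover $\overline{\tilde\Om}$ by finitely many sets of the local estimate and chain), so for $|x-y|\ge\rho$ one gets $d_\Om(x,y)\le D\le D\rho^{-1/r}|x-y|^{1/r}$. Combining the two regimes yields $(ii)$.

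\smallskip
\noindent\textbf{Expected main obstacle.} The openness step in Chow--Rashevskii is soft. The real difficulty is the \emph{uniform} quantitative commutator-flow estimate behind the upper bound: making rigorous that the flow of a length-$\ell$ bracket over time $s$ is shadowed by a sub-unit curve of length $O(|s|^{1/\ell})$ uniformly over the region in play, and that concatenating $n$ such pieces --- whose mutual correction terms are of higher order --- still lands exactly at the prescribed target. The standard way to organize this is to introduce privileged (exponential) coordinates at each point and prove the Ball--Box theorem, which simultaneously produces $r$ as the correct exponent; this is precisely what is done in \cite{nagel,gromov}, which is why the statement is merely cited here.
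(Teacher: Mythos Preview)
The paper does not prove this proposition at all; it merely cites \cite{gromov,nagel} and states the result. Your sketch is exactly the standard route taken in those references (Chow--Rashevskii for connectivity, the elementary Euclidean-speed bound for the lower estimate, and the ball--box/privileged-coordinates machinery for the upper estimate), and you correctly flag at the end that this is why the paper only cites. So there is nothing to compare against, and your outline is sound.

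One small technical slip worth fixing in the lower bound: you write ``if $\gamma:[0,T]\to\tilde\Om$ is sub-unit\dots'', but $d_\Om$ is an infimum over sub-unit curves valued in $\Om$, not in $\tilde\Om$, and a competitor may well exit $\tilde\Om$. Taking $M=\sup_{\overline{\tilde\Om}}(\cdot)$ then does not control $|\dot\gamma|$ along the whole curve. The standard repair, since you are already assuming $\tilde\Om\Subset\Om$: pick an intermediate compact $\tilde\Om\Subset K\Subset\Om$, set $M:=\sup_K\big(\sum_j|X_j|^2\big)^{1/2}$, and observe that any sub-unit curve from $x\in\tilde\Om$ that leaves $K$ must have $T\ge M^{-1}\operatorname{dist}(\tilde\Om,\partial K)>0$, so for $|x-y|$ small the infimum is realized among curves staying in $K$ (where your bound applies), while for $|x-y|$ bounded away from zero the inequality is trivial after adjusting the constant. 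With that patch the argument is complete.
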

As a simple corollary of Proposition \ref{horm} we get that, under condition \eqref{hormander}, the topology induced by $d_\Om$ on $\Om$ is equivalent to the Euclidean topology.
Next, we recall an approximation result  based on an original argument due  to Friederichs in 1944 for the local version, which was extended to a global result in  \cite{GN,franchiserapserra}. Its proof can be carried out by means of similar techniques.
\begin{prop}\label{c2c1}
	Let $X$ satisfies \eqref{hormander} on $\Om$. If $v\in C^1_X(\Om)$, then for any open set $V\Subset\Om$ there exists a sequence $(v_h)_h\in C^\infty(\Om)$ such that $v_h\to u$ and $Xv_h\to Xu$ uniformly on $\overline V$.
\end{prop}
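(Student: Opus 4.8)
The plan is to combine a mollification argument in $\mathbb{R}^n$ with the commutation properties of convolution and differentiation, following the classical Friedrichs scheme adapted to Hörmander vector fields. First I would fix an open set $V \Subset \Om$ and choose an intermediate open set $W$ with $V \Subset W \Subset \Om$, so that $\overline{W}$ is a compact subset of $\Om$ on which the coefficients $c_{j,i}$ of the vector fields, together with their first derivatives, are bounded. For $h$ large I set $v_h := v * \rho_h$, where $\rho_h$ is a standard mollifier supported in $B_{1/h}(0)$; for $h$ sufficiently large (depending on $\operatorname{dist}(W, \partial\Om)$) the convolution $v_h$ is well-defined and smooth on $W$, and $v_h \to v$ uniformly on $\overline{V}$ since $v$ is (uniformly) continuous on $\overline{W}$.

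The heart of the argument is to control $X_j v_h - X_j v$ on $\overline{V}$. Writing $X_j v = \sum_{i=1}^n c_{j,i} \partial_i v$, the distributional identity $\partial_i(v * \rho_h) = (\partial_i v) * \rho_h$ gives
\begin{equation*}
X_j v_h = \sum_{i=1}^n c_{j,i}\, \big((\partial_i v) * \rho_h\big),
\end{equation*}
so that
\begin{equation*}
X_j v_h - (X_j v) * \rho_h = \sum_{i=1}^n \Big( c_{j,i} \, (\partial_i v * \rho_h) - (c_{j,i}\,\partial_i v) * \rho_h \Big).
\end{equation*}
Each term on the right is a commutator between multiplication by the smooth coefficient $c_{j,i}$ and convolution with $\rho_h$; the classical Friedrichs commutator lemma (proved exactly as in the Euclidean setting, using that $c_{j,i} \in C^\infty$ hence Lipschitz on $\overline{W}$, and $\partial_i v \in L^\infty_{loc}$ since $v \in C^1_X$ does not immediately give this — so here I would instead use that $X_j v \in C(\Om)$, hence locally bounded, and work with the quantities $(\partial_i v)$ only through the combinations $X_j v$) shows these commutators tend to $0$ locally uniformly. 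Concretely, it is cleaner to avoid the individual $\partial_i v$ and argue: $(X_j v) * \rho_h \to X_j v$ uniformly on $\overline{V}$ because $X_j v$ is continuous on $\overline{W}$, and the commutator defect $X_j v_h - (X_j v) * \rho_h$ is estimated by a bound of the form $\|\nabla c_{j,i}\|_{L^\infty(\overline{W})} \, \omega_h$, where $\omega_h \to 0$ is controlled via the modulus of continuity of $v$ after an integration by parts that moves a derivative off $v$; this is precisely the content of the Friedrichs lemma.

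The main obstacle, and the point requiring care, is that $v \in C^1_X(\Om)$ only guarantees continuity of the horizontal derivatives $X_i v$, not of the full Euclidean gradient $\nabla v$ — so one cannot naively mollify $\nabla v$. The resolution is to keep all estimates expressed through the horizontal quantities: one shows that the commutator $[c_{j,i}, \rho_h *] \partial_i v$, summed appropriately over $i$, can be rewritten (via integration by parts in the convolution) in terms of $v$ itself and $\nabla c_{j,i}$, yielding a bound involving only $\|v\|_{L^\infty(\overline{W})}$, the modulus of continuity of $v$, and the smooth coefficients — all finite on the compact set $\overline{W}$. This is exactly why the Hörmander condition \eqref{hormander} enters only indirectly, through Proposition \ref{horm}(ii) ensuring that $d_\Om$-balls and Euclidean balls are comparable so that "$\overline V$" makes sense uniformly in both topologies, and the statement reduces to the Euclidean Friedrichs approximation carried out componentwise. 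Having established $v_h \to v$ and $X v_h \to X v$ uniformly on $\overline{V}$, the proof is complete; I would cite \cite{GN, franchiserapserra} for the global version of this scheme and note that the local statement needed here follows by the same argument localized to $W$.
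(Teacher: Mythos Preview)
The paper does not supply a proof of this proposition; it only records that the local version goes back to Friedrichs and that the global extension is in \cite{GN,franchiserapserra}, adding that ``its proof can be carried out by means of similar techniques.'' Your approach---mollify $v$, split $X_j v_h - X_j v$ into $(X_j v)*\rho_h - X_j v$ plus a Friedrichs commutator, and handle the commutator via integration by parts so that only $v$ and the smooth coefficients (never the possibly nonexistent $\nabla v$) appear---is exactly the scheme the paper is invoking, and your identification of the main obstacle and its resolution is correct. One small correction: the H\"ormander condition \eqref{hormander} actually plays no role in this argument, which is pure Euclidean mollification with smooth coefficients (indeed \cite{franchiserapserra} proves it for Lipschitz vector fields with no bracket-generating hypothesis), so your appeal to Proposition~\ref{horm}(ii) is unnecessary.
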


The \emph{horizontal Lipschitz space} is defined by
\begin{equation*}
    \Lip(\Om,d_\Om):=\left\{u:\Om\scu\mathbb{R}\,:\,\sup_{x\neq y,\,x,y\in\Om}\frac{|u(x)-u(y)|}{d_\Om(x,y)}<+\infty\right\}
\end{equation*}
and we say that $u\in \Lip_{loc}(\Om,d_\Om)$ if every point $x\in \Omega$ has a neighbourhood $U$ such that $u\in \Lip(U,d_\Om)$.
Thanks to  \cite[Theorem 1.3]{GN} one has
\begin{equation*}
    W^{1,\infty}_{X,loc}(\Omega)=\Lip_{loc}(\Om,d_\Om).
\end{equation*}
Therefore, in the following we will identify functions $u\in W^{1,\infty}_{X,loc}(\Om)$ with their continuous representatives.
We also recall a Poincar\'e type inequality for trace zero functions (see \cite{CDG, maionepinafsk2} in the Carnot-Carathéodory setting and \cite[Theorem 6.21]{BBB} for a version in PI spaces).
\begin{thm}\label{poinc}
Let $X=(X_1,...,X_m)$ be a smooth family of H\"ormander vector fields in $\Om_0\subseteq \Rn$. Let $\Om\Subset\Om_0$ be a bounded domain and let $1\leq p<\infty$. Then there exists a constant $c=c(\Om,p)>0$ such that
\begin{equation*}
    \int_\Om |u|^p\, dx\leq c\int_\Om |Xu|^p\, dx
\end{equation*}
for any $u\in W^{1,p}_{X,0}(\Om)$.
\end{thm}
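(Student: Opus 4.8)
The plan is to reduce to smooth, compactly supported functions by the very definition of $W^{1,p}_{X,0}(\Om)$, and then prove the inequality for such functions by extending them by zero and feeding them into a Jerison-type Poincar\'e inequality on Carnot--Carath\'eodory balls, which is available under the H\"ormander condition. For the reduction: since $W^{1,p}_{X,0}(\Om)=\overline{C^\infty_c(\Om)}^{\|\cdot\|_{\anso{\Om}}}$, if we establish
\[
\int_\Om |u|^p\,dx\leq c\int_\Om |Xu|^p\,dx
\]
with a constant $c=c(\Om,p)$ uniform over all $u\in C^\infty_c(\Om)$, then for general $u\in W^{1,p}_{X,0}(\Om)$ we choose $u_h\in C^\infty_c(\Om)$ with $u_h\to u$ in $\anso{\Om}$, so that $\|u_h\|_{L^p(\Om)}\to\|u\|_{L^p(\Om)}$ and $\|Xu_h\|_{L^p(\Om)}\to\|Xu\|_{L^p(\Om)}$, and pass to the limit in the inequality for $u_h$. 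Thus it suffices to treat $u\in C^\infty_c(\Om)$.

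Fix $u\in C^\infty_c(\Om)$ and let $\tilde u$ be its extension by $0$ to $\Om_0$. Since $\operatorname{supp}u$ is a compact subset of $\Om\Subset\Om_0$, we have $\tilde u\in C^\infty_c(\Om_0)$, with $X\tilde u=Xu$ on $\Om$ and $X\tilde u=0$ on $\Om_0\setminus\operatorname{supp}u$; in particular $X\tilde u=0$ on $\Om_0\setminus\Om$. Because $\overline\Om$ is a compact subset of the open set $\Om_0$ and $d_{\Om_0}$ induces the Euclidean topology (Proposition \ref{horm}), we may fix a bounded open set $U$ with $\overline\Om\subseteq U\Subset\Om_0$ and $\mathcal{L}^n(U\setminus\overline\Om)>0$, on which a Poincar\'e inequality of the form
\[
\int_U |v-v_U|^p\,dx\leq C_U\int_U |Xv|^p\,dx\qquad\text{for all }v\in\anso{U}
\]
holds with $C_U=C_U(\Om,p)$, where $v_U:=\ave_U v\,dx$. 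Indeed, under the H\"ormander condition the CC-balls whose fixed dilates $\lambda B$ are compactly contained in $\Om_0$ support the Jerison--Poincar\'e inequality $\ave_{B}|v-v_B|^p\,dx\leq C r_B^{\,p}\ave_{\lambda B}|Xv|^p\,dx$ with constants uniform over such balls (this rests on the work of Nagel--Stein--Wainger on the doubling of $\mathcal{L}^n$ with respect to $d_{\Om_0}$ and on Jerison's Poincar\'e estimate); covering $\overline\Om$ by finitely many such balls and chaining them in the standard PI-space fashion produces $U$ together with $C_U$ depending only on $\Om$ and $p$.

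To conclude, apply the displayed Poincar\'e inequality on $U$ to $v=\tilde u|_U\in\anso{U}$. Since $\tilde u\equiv 0$ on $U\setminus\Om$, which has positive measure, and $X\tilde u=0$ off $\Om$ while $\operatorname{supp}u\subseteq\Om\subseteq U$, we get
\[
\mathcal{L}^n(U\setminus\Om)\,|\tilde u_U|^p=\int_{U\setminus\Om}|\tilde u-\tilde u_U|^p\,dx\leq\int_U |\tilde u-\tilde u_U|^p\,dx\leq C_U\int_U |X\tilde u|^p\,dx=C_U\int_\Om |Xu|^p\,dx,
\]
hence $|\tilde u_U|^p\leq \big(C_U/\mathcal{L}^n(U\setminus\Om)\big)\int_\Om |Xu|^p\,dx$. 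Then, using $(a+b)^p\leq 2^{p-1}(a^p+b^p)$,
\[
\int_\Om |u|^p\,dx=\int_U |\tilde u|^p\,dx\leq 2^{p-1}\Big(\int_U |\tilde u-\tilde u_U|^p\,dx+\mathcal{L}^n(U)\,|\tilde u_U|^p\Big)\leq 2^{p-1}C_U\Big(1+\tfrac{\mathcal{L}^n(U)}{\mathcal{L}^n(U\setminus\Om)}\Big)\int_\Om |Xu|^p\,dx,
\]
which is the desired inequality with $c=2^{p-1}C_U\big(1+\mathcal{L}^n(U)/\mathcal{L}^n(U\setminus\Om)\big)$, a constant depending only on $\Om$ and $p$.

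The only substantive ingredient is the CC-ball Poincar\'e inequality together with the passage from ball estimates to the estimate on $U$ by chaining, i.e.\ the fact that $(\Om_0,d_{\Om_0},\mathcal{L}^n)$ behaves as a PI space on compact subsets; this is precisely where the H\"ormander condition enters, and the hypothesis $\Om\Subset\Om_0$ is exactly what guarantees uniformity of the constants. The remaining steps---extension by zero, the zero-average trick on $U\setminus\Om$, and the density argument---are routine.
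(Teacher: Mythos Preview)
The paper does not prove this theorem; it merely cites \cite{CDG,maionepinafsk2} for the Carnot--Carath\'eodory setting and \cite[Theorem~6.21]{BBB} for the PI-space version. So there is no ``paper's own proof'' to compare against.

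Your argument is a correct and standard self-contained route: density reduction to $C^\infty_c(\Om)$, extension by zero, application of a mean-oscillation Poincar\'e inequality on a larger domain $U\supseteq\overline\Om$, and the zero-average trick exploiting that $\tilde u$ vanishes on $U\setminus\Om$. The computation at the end is clean and the constants are tracked correctly.

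The only step that is sketched rather than proved is the existence of $U$ with $\overline\Om\subseteq U\Subset\Om_0$ supporting the full Poincar\'e inequality $\int_U|v-v_U|^p\,dx\leq C_U\int_U|Xv|^p\,dx$. Obtaining this from Jerison's ball inequality via chaining requires $U$ to satisfy a Boman/John-type chain condition, which you assert but do not verify. A slightly cleaner way around this: since $\overline\Om$ is compact in $(\Om_0,d_{\Om_0})$, one can take $U=B_R(x_0,d_{\Om_0})$ a single Carnot--Carath\'eodory ball with $\overline\Om\subseteq U$ and $\lambda U\Subset\Om_0$ \emph{provided} $\Om_0$ is large enough; when it is not, the chaining over a finite cover of $\overline\Om$ by small balls (all with dilates in $\Om_0$, using connectedness of $\overline\Om$) is indeed the right move, and is exactly what the PI-space literature (e.g.\ \cite{BBB}) formalizes. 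Since you explicitly invoke this body of work, the gap is one of exposition rather than of mathematics. In effect, your proof unpacks what the paper's cited references contain.
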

\begin{cor}\label{quasipoincare}
In the same hypothesis as above, for every $g\in\anso{\Om}$  there exists a constant $K=K(\Om,p,g)>0$ such that
\begin{equation*}
    \int_\Om |u|^p\, dx\leq K\left(1+\int_\Om |Xu|^p\, dx\right)
\end{equation*}
for any $u\in W^{1,p}_{X,g}(\Om)$.
\end{cor}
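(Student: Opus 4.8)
The plan is to reduce the statement to Theorem \ref{poinc} by splitting an arbitrary $u\in W^{1,p}_{X,g}(\Om)$ as $u=(u-g)+g$. By the very definition of $W^{1,p}_{X,g}(\Om)$, the summand $u-g$ belongs to $W^{1,p}_{X,0}(\Om)$, so Theorem \ref{poinc} applies to it; moreover $g\in\anso{\Om}$ guarantees that $\int_\Om|g|^p\,dx$ and $\int_\Om|Xg|^p\,dx$ are finite. The only elementary ingredient I would record first is the convexity inequality $(a+b)^p\leq 2^{p-1}(a^p+b^p)$, valid for $a,b\geq 0$ and $p\geq 1$, which gets used twice: once to bound $|u|^p$ in terms of $|u-g|^p$ and $|g|^p$, and once to bound $|X(u-g)|^p=|Xu-Xg|^p$ in terms of $|Xu|^p$ and $|Xg|^p$.

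The computation is then straightforward bookkeeping. Integrating and applying the convexity inequality,
\begin{equation*}
\int_\Om|u|^p\,dx\leq 2^{p-1}\int_\Om|u-g|^p\,dx+2^{p-1}\int_\Om|g|^p\,dx,
\end{equation*}
while by Theorem \ref{poinc} there is $c=c(\Om,p)>0$ with $\int_\Om|u-g|^p\,dx\leq c\int_\Om|X(u-g)|^p\,dx$, and a further application of the convexity inequality yields
\begin{equation*}
\int_\Om|u-g|^p\,dx\leq 2^{p-1}c\int_\Om|Xu|^p\,dx+2^{p-1}c\int_\Om|Xg|^p\,dx.
\end{equation*}
Combining the two displays,
\begin{equation*}
\int_\Om|u|^p\,dx\leq 2^{2(p-1)}c\int_\Om|Xu|^p\,dx+\left(2^{2(p-1)}c\int_\Om|Xg|^p\,dx+2^{p-1}\int_\Om|g|^p\,dx\right),
\end{equation*}
and since $g$ is fixed the quantity in parentheses is a finite constant depending only on $\Om$, $p$ and $g$. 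Choosing
\begin{equation*}
K:=\max\left\{2^{2(p-1)}c,\ 2^{2(p-1)}c\int_\Om|Xg|^p\,dx+2^{p-1}\int_\Om|g|^p\,dx\right\}
\end{equation*}
gives $\int_\Om|u|^p\,dx\leq K\left(1+\int_\Om|Xu|^p\,dx\right)$, as claimed.

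I do not expect any genuine obstacle here: the argument is purely a tracking of constants, the only real input being the trace-zero Poincaré inequality of Theorem \ref{poinc} together with the membership $u-g\in W^{1,p}_{X,0}(\Om)$ built into the definition of $W^{1,p}_{X,g}(\Om)$. If one prefers a less crude constant one may replace $(a+b)^p\leq 2^{p-1}(a^p+b^p)$ by $(a+b)^p\leq (1+\eps)a^p+C_{\eps,p}b^p$, but this refinement is immaterial for the applications of the corollary in the sequel.
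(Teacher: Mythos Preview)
Your proof is correct and is precisely the standard argument one has in mind for this corollary; the paper itself states Corollary~\ref{quasipoincare} without proof, treating it as an immediate consequence of Theorem~\ref{poinc}. Your decomposition $u=(u-g)+g$ together with the convexity inequality is exactly the intended route.
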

\subsection{Subgradient in Carnot-Carathéodory Spaces}\label{subgradient}
In this section we recall some properties of the so-called \emph{$(X,N)$-subgradient} of a function $u\in W_{X,\emph{loc}}^{1,\infty}(\Om)$, introduced in \cite{PVW} as a generalization of the classical Clarke's subdifferential (cf. \cite{clarke}) and defined by
\begin{equation*}
	\sub u(x):=\ch\{\lim_{n\to\infty}Xu(y_n)\,: \,y_n\to x,\,y_n\notin N\text{ and the limit }\lim_{n\to\infty}Xu(y_n) \text{ exists}\}
\end{equation*}
for any $x\in \Om$, where $N\subseteq\Om$ is any Lebesgue-negligible set containing the non-Lebesgue points of $Xu$ and $\ch$ denotes the closure of the convex hull. The next two propositions, which can be found as \cite[Proposition 2.4]{PVW} and \cite[Proposition 2.5]{PVW}, describe some properties of the $(X,N)$-subgradient which will be useful in the sequel.
\begin{prop}\label{w1}
	Let $u$ and $N$ be as above. Then the following facts hold.
	\begin{itemize}
		\item[$(i)$] $\sub u(x)$ is a non-empty, convex, closed and bounded subset of $\Rm$ for any $x\in \Om$.
		\item[$(ii)$] if $u\in C^1_X(\Om)$, then 
		\begin{equation*}
			\sub u(x)=\{Xu(x)\}
		\end{equation*}
		for any $x\in \Om$.
	\end{itemize}
\end{prop}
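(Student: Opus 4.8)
The statement to prove is Proposition \ref{w1}: that the $(X,N)$-subgradient $\sub u(x)$ is non-empty, convex, closed and bounded for every $x \in \Om$, and that it reduces to $\{Xu(x)\}$ when $u \in C^1_X(\Om)$.

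My plan for part $(i)$ is to work directly from the definition
$$\sub u(x) = \ch\{\lim_{n\to\infty} Xu(y_n) : y_n \to x,\ y_n \notin N,\ \lim_{n\to\infty} Xu(y_n) \text{ exists}\}.$$
First I would establish boundedness: since $u \in W^{1,\infty}_{X,loc}(\Om)$, the horizontal gradient $Xu$ is essentially bounded on a neighbourhood of $x$, say $\|Xu\|_{L^\infty(U)} = L < \infty$ on some $U \ni x$. Every point $y_n \notin N$ used in the definition is a Lebesgue point of $Xu$, and at a Lebesgue point the value $Xu(y_n)$ (in the sense fixed in the Preliminaries, as the limit of averages) satisfies $|Xu(y_n)| \le L$; hence every cluster value lies in the closed ball $\overline{B_L(0)} \subseteq \Rm$, and so does its convex hull. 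This gives boundedness. Convexity and closedness then require a small argument: the set of cluster values $S := \{\lim Xu(y_n) : \ldots\}$ need not itself be convex, but its convex hull $\mathrm{co}(S)$ lies in $\overline{B_L(0)}$, so $\overline{\mathrm{co}(S)}$ is a closed bounded convex set; convexity and closedness are then immediate from taking the closed convex hull. For non-emptiness I would argue that $N$ is Lebesgue-negligible, so $U \setminus N$ has full measure and in particular accumulates at $x$; picking any sequence $y_n \to x$ with $y_n \in U \setminus N$, the values $Xu(y_n)$ lie in the compact set $\overline{B_L(0)}$, hence admit a convergent subsequence, whose limit belongs to $S$, so $S \neq \emptyset$ and therefore $\sub u(x) \neq \emptyset$.

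For part $(ii)$, assume $u \in C^1_X(\Om)$. Then $Xu$ is continuous on $\Om$, so $Xu$ has no non-Lebesgue points and we may take $N = \emptyset$ (or note the argument is independent of the admissible choice of $N$ since $N$ is negligible and $Xu$ is continuous). For any sequence $y_n \to x$ with the limit of $Xu(y_n)$ existing, continuity forces $\lim_{n} Xu(y_n) = Xu(x)$. Hence the set of cluster values $S = \{Xu(x)\}$ is a single point, which is already closed and convex, so $\sub u(x) = \ch\{Xu(x)\} = \{Xu(x)\}$.

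I do not expect any serious obstacle here; the only point requiring a little care is making sure that the "value of $Xu$ at a Lebesgue point", as fixed via averages in the Preliminaries, genuinely satisfies the pointwise bound $|Xu(y)| \le \|Xu\|_{L^\infty(U)}$ — this follows because the average of a function over a ball is bounded by its essential supremum, and the limit of such averages inherits the bound. Everything else is a routine application of compactness of closed balls in $\Rm$, negligibility of $N$, and elementary properties of the closed convex hull. Since this Proposition is quoted from \cite{PVW} (it is \cite[Proposition 2.4]{PVW}), in the paper itself one would simply cite that reference rather than reprove it; the sketch above is how one would reconstruct the short argument.
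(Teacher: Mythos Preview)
Your proposal is correct and complete. The paper itself does not prove this proposition at all: it is stated without proof and attributed to \cite[Proposition 2.4]{PVW}, exactly as you anticipated in your final paragraph. Your reconstruction of the argument---local essential boundedness of $Xu$ giving boundedness of the cluster set, closedness and convexity coming for free from the closed convex hull, non-emptiness from density of $\Om\setminus N$ combined with Bolzano--Weierstrass, and part $(ii)$ from continuity of $Xu$---is the standard one and is precisely what one finds in the cited reference.
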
 
\begin{prop}\label{w2}
	Assume that $X$ satisfies \eqref{hormander} on $\Om$ and let $C$ be the coefficient matrix of $X$ as in \eqref{coefficients}.
	Let  $u\in W^{1,\infty}_{X,loc}(\Om)$ and let $\gamma\in \emph{AC}([-\beta,\beta],\Om)$ be a horizontal curve with
	\begin{equation*}
		\Dot{\gamma}(t)=C(\gamma(t))^T\cdot A(t)\quad \mbox{a.e. t}\in [-\beta,\beta].
	\end{equation*}
	If $1\leq p\leq+\infty$, and $A\in L^p((-\beta,\beta),\Rm)$, 
	 then the function $t\mapsto u(\gamma(t))$ belongs to $W^{1,p}(-\beta,\beta)$, and there exists a function $g\in L^\infty((-\beta,\beta),\Rm)$ such that 
	\begin{equation*}
		\frac{d (u \circ \gamma)(t)}{d t}=g(t)\cdot A(t)
	\end{equation*}
	for a.e. $t\in (-\beta,\beta)$. Moreover
	\begin{equation*}
		g(t)\in\sub u(\gamma (t))
	\end{equation*}
	for a.e. $t\in (-\beta,\beta).$
\end{prop}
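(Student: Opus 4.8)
The plan is to reduce the statement to the one-dimensional chain rule for absolutely continuous functions, once we know that $u\circ\gamma$ is absolutely continuous with the right bound on its derivative. First I would handle the smooth case: if $u\in C^\infty(\Om)$, then $t\mapsto u(\gamma(t))$ is absolutely continuous and, using \eqref{horiz} together with $\Dot\gamma(t)=C(\gamma(t))^T\cdot A(t)$, the classical chain rule gives $\frac{d}{dt}(u\circ\gamma)(t)=\langle Xu(\gamma(t)),A(t)\rangle$ for a.e.\ $t$, so in this case $g(t)=Xu(\gamma(t))$ works and $g(t)=Xu(\gamma(t))\in\sub u(\gamma(t))$ by Proposition \ref{w1}$(ii)$. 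For the general case $u\in W^{1,\infty}_{X,loc}(\Om)$, I would cover the compact curve $\gamma([-\beta,\beta])$ by finitely many open sets $V\Subset\Om$ and work locally, so without loss of generality $u\in W^{1,\infty}_X(V)$ with $\|Xu\|_{L^\infty(V)}<\infty$.

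Next I would approximate. Using a mollification adapted to the vector fields (as in Proposition \ref{c2c1}, or standard Friedrichs-type mollifiers applied on a slightly larger open set), take $u_h\in C^\infty$ with $u_h\to u$ uniformly and $Xu_h\to Xu$ in, say, $L^q_{loc}$ for every $q<\infty$, and with $\|Xu_h\|_\infty$ bounded uniformly. Then $\frac{d}{dt}(u_h\circ\gamma)(t)=\langle Xu_h(\gamma(t)),A(t)\rangle$. The functions $u_h\circ\gamma$ converge uniformly to $u\circ\gamma$, and their derivatives are bounded in $L^p(-\beta,\beta)$ (since $|\langle Xu_h(\gamma(t)),A(t)\rangle|\le\|Xu_h\|_\infty|A(t)|$ and $A\in L^p$); passing to a weakly convergent subsequence of the derivatives identifies the weak limit as $(u\circ\gamma)'$, so $u\circ\gamma\in W^{1,p}(-\beta,\beta)$ and $|(u\circ\gamma)'(t)|\le\|Xu\|_\infty|A(t)|$ a.e. This also shows that the derivative of $u\circ\gamma$ only depends on $A$ in a way controlled by $\|Xu\|_\infty$, which lets one write $(u\circ\gamma)'(t)=g(t)\cdot A(t)$ for some $g\in L^\infty((-\beta,\beta),\Rm)$ — for instance by taking $g(t)$ to be the orthogonal projection of the natural candidate onto $\spann\{A(t)\}$ when $A(t)\neq 0$, and $g(t)=0$ otherwise; one checks $\|g\|_\infty\le\|Xu\|_\infty$.

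The main obstacle, and the crux of the proof, is the last assertion $g(t)\in\sub u(\gamma(t))$ for a.e.\ $t$. Here one cannot just pass to the limit, because $Xu_h(\gamma(t))$ need not converge pointwise. The strategy I would use is a density/Lebesgue-point argument: since $N$ is Lebesgue-negligible and $\gamma$ is sub-unit (hence, by Proposition \ref{horm}, absolutely continuous even in the Euclidean sense with controlled speed), one shows using the co-area-type or Fubini-type properties of these curves — this is where the H\"ormander condition \eqref{hormander} enters — that for a.e.\ $t$ the point $\gamma(t)$ is a Lebesgue point of $Xu$ approachable through points outside $N$, and moreover that along a suitable sequence $y_k\to\gamma(t)$, $y_k\notin N$, the difference quotients of $u$ along $\gamma$ near $t$ are controlled by $Xu(y_k)$. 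Combining this with the definition of $\sub u$ as a closed convex hull of such limits, and with the fact that $(u\circ\gamma)'(t)$ is by construction a limit of quantities $\langle Xu(y_k),A(t)\rangle$, one concludes that $(u\circ\gamma)'(t)=\langle w,A(t)\rangle$ for some $w\in\sub u(\gamma(t))$; taking $g(t)=w$ (or its projection) finishes the proof. Since this is quoted from \cite[Proposition 2.5]{PVW}, I would either reproduce that argument or, more economically, reduce to it by verifying that our hypotheses match theirs and citing it directly.
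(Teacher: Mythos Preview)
The paper does not prove this proposition: it is quoted verbatim from \cite[Proposition~2.5]{PVW} and simply cited there, so there is no ``paper's own proof'' to compare against. Your final suggestion---to verify that the hypotheses match and invoke \cite{PVW} directly---is exactly what the paper does.

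That said, since you also sketch an independent argument, a couple of comments on it. The approximation part (mollify, use the smooth chain rule, pass to the limit to get $u\circ\gamma\in W^{1,p}$ with the bound $|(u\circ\gamma)'(t)|\le \|Xu\|_\infty |A(t)|$) is fine and is indeed the standard first step. The genuine gap is in your construction of $g$. Writing $(u\circ\gamma)'(t)=g(t)\cdot A(t)$ by projecting ``the natural candidate'' onto $\spann\{A(t)\}$ does produce \emph{some} $g\in L^\infty$, but this particular $g(t)$ has no reason to lie in $\sub u(\gamma(t))$; the whole content of the statement is that a selection \emph{inside} the subgradient exists. Your subsequent Lebesgue-point/Fubini heuristic does not close this: the curve has Lebesgue measure zero, so knowing that $\gamma(t)$ is a Lebesgue point of $Xu$ for a.e.\ $t$ is not automatic, and even if it were, it would not by itself yield $(u\circ\gamma)'(t)=\langle Xu(\gamma(t)),A(t)\rangle$. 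The argument in \cite{PVW} proceeds instead via the upper semicontinuity of the set-valued map $x\mapsto\sub u(x)$ together with a mean-value/Filippov-type selection: one controls the difference quotients of $u\circ\gamma$ by elements of $\sub u$ at nearby points and then passes to the limit using that $\sub u(\gamma(t))$ is compact and convex. If you want a self-contained proof, that is the mechanism you need to supply.
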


As a consequence of Proposition \ref{w1} and Proposition \ref{w2}, the following holds.
\begin{prop}\label{fermat}
    Let $u\in C^2_X(\Om)$. Let $x_0$ be a local maximum (minimum) point of $u$. Then $Xu(x_0)=0$ and $X^2u(x_0)\leq\, (\geq)\,\, 0$.
\end{prop}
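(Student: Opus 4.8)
The plan is to reduce the statement to the classical one-variable second-derivative test along integral curves of horizontal directions. Fix $\xi=(\xi_1,\dots,\xi_m)\in\mathbb R^m$ and set $Y:=\sum_{j=1}^m\xi_j X_j$, a smooth vector field on $\Om$. By the Cauchy--Lipschitz theorem there are $\beta>0$ and an integral curve $\gamma\in C^\infty((-\beta,\beta),\Om)$ with $\dot\gamma(t)=Y(\gamma(t))$ and $\gamma(0)=x_0$; shrinking $\beta$ if necessary, we may assume $\gamma((-\beta,\beta))$ lies in a neighbourhood of $x_0$ on which $u$ attains its local maximum. Writing $X_j=\sum_i c_{j,i}\partial_i$ one checks that $\dot\gamma(t)=C(\gamma(t))^T\cdot\xi$, so $\gamma$ is a horizontal curve of the type to which Proposition \ref{w2} applies, with $A\equiv\xi\in L^\infty$.

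Next I would compute the derivatives of $\phi(t):=u(\gamma(t))$. Since $u\in C^2_X(\Om)\subseteq C^1_X(\Om)\subseteq W^{1,\infty}_{X,loc}(\Om)$, Proposition \ref{w2} (with $p=\infty$) gives $\phi\in W^{1,\infty}(-\beta,\beta)$ and $\phi'(t)=g(t)\cdot\xi$ for a.e. $t$ with $g(t)\in\sub u(\gamma(t))$; by Proposition \ref{w1}$(ii)$, $\sub u(\gamma(t))=\{Xu(\gamma(t))\}$, hence $\phi'(t)=\langle Xu(\gamma(t)),\xi\rangle=\sum_j\xi_j X_ju(\gamma(t))$ for a.e. $t$. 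As the right-hand side is continuous in $t$, in fact $\phi\in C^1$ and the identity holds everywhere. Applying the same argument to each $v_j:=X_ju\in C^1_X(\Om)\subseteq W^{1,\infty}_{X,loc}(\Om)$ along the same curve $\gamma$, Propositions \ref{w2} and \ref{w1}$(ii)$ yield $\frac{d}{dt}X_ju(\gamma(t))=\langle X(X_ju)(\gamma(t)),\xi\rangle=\sum_i\xi_i X_iX_ju(\gamma(t))$, again continuous. Therefore $\phi\in C^2(-\beta,\beta)$ and
\begin{equation*}
\phi''(t)=\sum_{i,j=1}^m\xi_i\xi_j\, X_iX_ju(\gamma(t))=\sum_{i,j=1}^m\xi_i\xi_j\,\frac{X_iX_ju+X_jX_iu}{2}(\gamma(t))=\langle X^2u(\gamma(t))\xi,\xi\rangle,
\end{equation*}
where the middle equality is just the symmetrization obtained by relabelling $i\leftrightarrow j$ in the double sum.

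Finally, since $x_0$ is a local maximum of $u$ and $\gamma(0)=x_0$, the $C^2$ function $\phi$ has a local maximum at $t=0$, so $\phi'(0)=0$ and $\phi''(0)\le0$, i.e. $\langle Xu(x_0),\xi\rangle=0$ and $\langle X^2u(x_0)\xi,\xi\rangle\le0$. Letting $\xi$ range over the standard basis of $\mathbb R^m$ gives $Xu(x_0)=0$, and letting $\xi$ be arbitrary gives $X^2u(x_0)\le0$ by definition of negative semidefiniteness; the minimum case follows by applying this to $-u$. I do not expect a serious obstacle: the only genuine subtlety is that $u$ need not be Euclidean-differentiable, so the chain rule for $\phi=u\circ\gamma$ (and for $v_j\circ\gamma$) must be supplied by Propositions \ref{w1}--\ref{w2} rather than by elementary calculus, and one must iterate that chain rule once to reach the second-order information; everything else is the one-variable extremum test.
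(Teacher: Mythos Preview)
Your proof is correct and follows essentially the same strategy as the paper: reduce to the one-variable second-derivative test along integral curves $\gamma$ of constant horizontal direction $\xi$, compute $\phi'(t)=\langle Xu(\gamma(t)),\xi\rangle$ and $\phi''(t)=\langle X^2u(\gamma(t))\xi,\xi\rangle$, and then read off the conclusions from $\phi'(0)=0$, $\phi''(0)\le 0$. The only cosmetic difference is that the paper cites \cite[Proposition 2.6]{PVW} directly for the chain rule, while you obtain it by combining Propositions \ref{w1}(ii) and \ref{w2}; your explicit iteration of that chain rule to reach $\phi''$ is a welcome clarification.
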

\begin{proof}
    We assume that $x_0$ is a local maximum, being the other case analogous. Let $\gamma$ be a smooth horizontal curve defined in a neighborhood of $0$, such that $\gamma(0)=x_0$ and $\Dot{\gamma}(t)=C(\gamma(t))^T\cdot A(t)$. Fix $i=1,\ldots,m$ and choose $A(t)=e_i$ where $e_i$ denotes the $i-$th element in the canonical basis of $\mathbb{R}^m$. Let $g(t):= u(\gamma(t))$. Then $g'(0)=0$ and $g''(0)\leq 0$. Thanks to \cite[Proposition 2.6]{PVW}, we know that 
    \begin{equation*}
        g'(t)=Xu(\gamma(t))\cdot A(t).
    \end{equation*}
    Hence, thanks to the choice of $A$, we conclude that $X_iu(x_0)=0$, and so $Xu(x_0)=0.$ To conclude, let us fix $\xi\in\Rm$ and let $A(t)=\xi$. Then, arguing as above,
    \begin{equation*}
        g'(t)=Xu(\gamma(t))\cdot \xi,
    \end{equation*}
    which implies that
    \begin{equation*}
        g''(t)=\sum_{i,j=1}^m X_iX_ju(\gamma(t))\xi_i\xi_j.
    \end{equation*}
Evaluating the previous identity in $t=0$ allows to conclude that $X^2 u(x_0)\leq 0.$
\end{proof}
We conclude this section with the following well-known property, whose proof in the smooth case goes back to \cite{jer} and which can be derived easily from Proposition \ref{w2}.

\begin{cor}
Assume that $X$ satisfies \eqref{hormander} and let $u\in W_{X,loc}^{1,\infty}(\Om)$. If  $Xu=0$ on $\Om$, then $u$ is constant on $\Om$.
\end{cor}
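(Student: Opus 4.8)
The plan is to deduce this from the path-integration property in Proposition \ref{w2}, exploiting the fact that under the H\"ormander condition any two points of $\Om$ can be joined by a horizontal curve. First I would fix an arbitrary point $x_0\in\Om$ and let $r>0$ be such that the Carnot-Carathéodory ball $B_r(x_0,d_\Om)$ is contained in $\Om$; since $u\in W^{1,\infty}_{X,loc}(\Om)$, Proposition \ref{horm} guarantees that $d_\Om$ is indeed a genuine distance inducing the Euclidean topology, so such $r$ exists. I would then pick any $y\in B_r(x_0,d_\Om)$ and choose a horizontal curve $\gamma\in \mathrm{AC}([0,T],\Om)$ with $\gamma(0)=x_0$, $\gamma(T)=y$, written as $\dot\gamma(t)=C(\gamma(t))^T A(t)$ with $A\in L^\infty$ (this is possible by the equivalent definition of $d_\Om$ via horizontal curves, and one can always take a Lipschitz sub-unit parametrization so that $A$ is bounded).

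Next I would apply Proposition \ref{w2} with $p=\infty$: the function $t\mapsto u(\gamma(t))$ lies in $W^{1,\infty}(0,T)$ and there is $g\in L^\infty((0,T),\Rm)$ with $\frac{d}{dt}(u\circ\gamma)(t)=g(t)\cdot A(t)$ for a.e.\ $t$, and $g(t)\in\sub u(\gamma(t))$ for a.e.\ $t$. But by hypothesis $Xu=0$ on $\Om$, hence $Xu$ has a continuous (identically zero) representative, so $u\in C^1_X(\Om)$ and Proposition \ref{w1}(ii) gives $\sub u(x)=\{Xu(x)\}=\{0\}$ for every $x\in\Om$. Therefore $g(t)=0$ for a.e.\ $t$, which forces $\frac{d}{dt}(u\circ\gamma)(t)=0$ for a.e.\ $t\in(0,T)$. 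Since $u\circ\gamma$ is absolutely continuous, it is constant on $[0,T]$, whence $u(y)=u(x_0)$.

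This shows $u$ is constant on $B_r(x_0,d_\Om)$ for every $x_0$, i.e.\ $u$ is locally constant in the $d_\Om$-topology; since $\Om$ is connected and (by Proposition \ref{horm}) the $d_\Om$-topology coincides with the Euclidean one, $u$ is constant on all of $\Om$. The only mild subtlety — and the step I would be most careful about — is the assertion that $Xu=0$ a.e.\ promotes $u$ to membership in $C^1_X(\Om)$ so that Proposition \ref{w1}(ii) applies; this is immediate once one identifies $u$ with a continuous representative (legitimate since $W^{1,\infty}_{X,loc}=\Lip_{loc}(\Om,d_\Om)$) and notes that its horizontal gradient then admits the continuous representative $0$. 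Alternatively, and perhaps more cleanly, one avoids invoking $C^1_X$ altogether by choosing in Proposition \ref{w2} the negligible set $N$ to contain all non-Lebesgue points of $Xu$ together with the (still negligible) set $\{Xu\neq 0\}$, so that every convergent sequence $Xu(y_n)$ with $y_n\notin N$ has limit $0$; then $\sub u(x)=\overline{co}\{0\}=\{0\}$ directly from the definition, and the argument proceeds as above without any regularity upgrade.
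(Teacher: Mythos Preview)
Your proposal is correct and follows precisely the approach the paper indicates: the paper does not spell out a proof but states that the result ``can be derived easily from Proposition \ref{w2}'', which is exactly what you do by integrating along horizontal curves and using that $\sub u \equiv \{0\}$. Your alternative argument---enlarging $N$ to absorb the null set $\{Xu\neq 0\}$ so that $\sub u(x)=\{0\}$ follows directly from the definition---is the cleaner of the two and avoids the (harmless but slightly awkward) detour through $C^1_X$ membership.
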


\subsection{Differentiability in Carnot-Carathéodory Spaces}\label{differentiability}
In this section we introduce a notion of differentiability for $C^1_X$ functions which is a generalization of the one introduced in \cite{M} to prove a Rademacher-type theorem for Lipschitz functions on suitable families of Carnot-Carathéodory spaces. The new notion will be crucial in the study of viscosity solutions for the asymptotic problem \eqref{limitproblem}.
The main result of the section is Proposition \ref{differenziale}, which yields  the differentiability and an explicit form for the differential of $C^1_X$ functions. 
We remark explicitly that although in the proof of this result we need to assume the linear independence of the vector fields $X_1,...,X_m$, later in the paper when we apply this proposition  we will not need to do so, thanks to an argument involving the Rothschild-Stein lifting theorem (cf. \cite{RS}.)
We say that a function $u\in C(\Om)$ is \emph{$X$-differentiable} at $x\in \Om$ if there exists a linear mapping $L_x:\Rn\scu\Ru$ such that
\begin{equation*}
	\lim_{d_\Om(x,y)\to 0}\frac{u(y)-u(x)-L_x(y-x)}{d_\Om(x,y)}=0.
\end{equation*}
In such a case we say that $d_Xu(x):=L_x$ is a \emph{$X$-differential of $u$ at $x$.} 
In order to guarantee the existence of a $X$-differential for a $C^1_X$ function, we   assume that the vector fields satisfy H\"ormander's condition \eqref{hormander} and in addition we also require that
\begin{equation}\label{lic}
\tag{LIC}
    \text{$X_1(x),\ldots,X_m(x)$ are linearly independent for any $x\in \Om$.}
\end{equation}
The additional hypothesis \eqref{lic} implies that the matrix $C(x)^T$ admits a left-inverse matrix for any $x\in \Om$.

\begin{prop}\label{leftinv}
	Assume that $X$ satisfies \eqref{lic}. Then, if we define $\tilde C$ as
	\begin{equation*}
		\tilde C(x):=(C(x)\cdot C(x)^T)^{-1}\cdot C(x)
	\end{equation*}
for any $x\in\Om$, then $\tilde C$ is well defined and continuous on $\Om$. Moreover it holds that 
\begin{equation*}
	\tilde C(x)\cdot C(x)^T=I_m
\end{equation*}
for any $x\in\Om$. Here $I_m$ denotes the $m\times m$ identity matrix.
\end{prop}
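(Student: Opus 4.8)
The statement is essentially linear algebra combined with a continuity observation, so the plan is elementary. First I would observe that hypothesis \eqref{lic} says precisely that for each fixed $x\in\Om$, the $n\times m$ matrix $C(x)^T$ has rank $m$ (its columns are the coordinate vectors of $X_1(x),\ldots,X_m(x)$, which are linearly independent). Consequently the Gram matrix $C(x)\cdot C(x)^T$ is an $m\times m$ symmetric matrix which is positive definite: for any $\xi\in\Rm\setminus\{0\}$ one has $\langle C(x)C(x)^T\xi,\xi\rangle=|C(x)^T\xi|^2>0$, since $C(x)^T\xi\neq 0$ by the rank condition. In particular $C(x)C(x)^T$ is invertible for every $x\in\Om$, so $\tilde C(x):=(C(x)\cdot C(x)^T)^{-1}\cdot C(x)$ is well defined pointwise on $\Om$.

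Next I would verify the left-inverse identity by a direct computation:
\begin{equation*}
\tilde C(x)\cdot C(x)^T=(C(x)\cdot C(x)^T)^{-1}\cdot C(x)\cdot C(x)^T=(C(x)\cdot C(x)^T)^{-1}\cdot\bigl(C(x)\cdot C(x)^T\bigr)=I_m,
\end{equation*}
which holds for every $x\in\Om$. This is the algebraic core of the statement and requires nothing beyond associativity of matrix multiplication.

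Finally, for continuity of $\tilde C$ on $\Om$, I would recall that the entries $c_{j,i}$ of $C$ are smooth (in particular continuous) on $\Om$, hence so are the entries of $C(x)^T$ and of the product $C(x)\cdot C(x)^T$. Since matrix inversion is a continuous (indeed smooth) operation on the open set of invertible matrices — each entry of the inverse is a rational function of the entries, namely a cofactor divided by the nonvanishing determinant — the map $x\mapsto(C(x)\cdot C(x)^T)^{-1}$ is continuous on $\Om$. Composing with the continuous map $x\mapsto C(x)$ via matrix multiplication (continuous in its arguments) yields that $\tilde C$ is continuous on $\Om$, completing the proof.

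There is no real obstacle here: the only point that needs a moment's care is the passage from the geometric statement \eqref{lic} (linear independence of the vector fields) to the algebraic statement that $C(x)C(x)^T$ is invertible, which is handled by the positive-definiteness argument via $|C(x)^T\xi|^2$ above. Everything else is routine.
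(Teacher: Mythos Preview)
Your proof is correct and follows essentially the same approach as the paper: invertibility of $C(x)C(x)^T$ from the rank condition (you argue via positive definiteness, the paper via the rank of a product), continuity via the adjugate/determinant formula, and the left-inverse identity by direct computation. The arguments differ only in cosmetic detail.
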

\begin{proof}
	Let us define $B(x):=C(x)\cdot C(x)^T$ for any $x\in\Om$. Thanks to \eqref{lic} we know that $C(x)$ and $C(x)^T$ have maximum rank, and so by standard linear algebra we know that $B(x)$ is a square matrix with maximum rank. Thus $B(x)$ is invertible and $\tilde C(x)$ is well defined. Moreover it holds that 
	$$\tilde C(x):=\frac{\emph{Adj}(B)(x)\cdot C(x)}{\det (B(x))},$$
	and so it is continuous on $\Om$. A trivial calculation shows that $\tilde C$ is a left inverse of $C^T$.
\end{proof}

\begin{lem}\label{nellapalla}
	Assume that $X$ satisfies \eqref{hormander}. Let $x,y\in\Om$ and $\varepsilon>0$. Assume that $\gamma\in AC([0,T],\Om)$ is a sub-unit curve such that $\gamma(0)=x$, $\gamma(T)=y$ and $T<d_\Om(x,y)+\varepsilon.$ Then it holds that
	\begin{equation}
		\gamma([0,T])\subseteq B_{d_\Om(x,y)+\varepsilon}(x,d_\Om).
	\end{equation}
\end{lem}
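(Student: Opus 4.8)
The statement to prove is Lemma \ref{nellapalla}: if $\gamma\in AC([0,T],\Om)$ is a sub-unit curve joining $x$ to $y$ with $T<d_\Om(x,y)+\varepsilon$, then the whole image $\gamma([0,T])$ lies in the ball $B_{d_\Om(x,y)+\varepsilon}(x,d_\Om)$.

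\textbf{Plan.} The strategy is to estimate, for an arbitrary $t\in[0,T]$, the Carnot–Carathéodory distance $d_\Om(x,\gamma(t))$ from above by the length of the restricted curve $\gamma|_{[0,t]}$, and then to observe that this length is at most $T$, which in turn is strictly less than $d_\Om(x,y)+\varepsilon$. Concretely, fix $t\in[0,T]$ and consider the reparametrization/restriction $\gamma_t:=\gamma|_{[0,t]}$, which is an absolutely continuous curve from $x=\gamma(0)$ to $\gamma(t)$, still sub-unit (the sub-unit bound $\sum_j a_j^2\le 1$ a.e.\ is inherited by any sub-interval). By the very definition of $d_\Om$ as an infimum over parameter-lengths of sub-unit curves joining the two endpoints, we get
\begin{equation*}
d_\Om(x,\gamma(t))\le t.
\end{equation*}

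\textbf{Key steps, in order.} First I would record that the restriction of a sub-unit curve to $[0,t]$ is again sub-unit with the same defining functions $a_1,\ldots,a_m$ (restricted), so it is an admissible competitor in the infimum defining $d_\Om(x,\gamma(t))$; hence $d_\Om(x,\gamma(t))\le t$. Second, since $t\le T$ and, by hypothesis, $T<d_\Om(x,y)+\varepsilon$, I conclude $d_\Om(x,\gamma(t))\le t\le T<d_\Om(x,y)+\varepsilon$, i.e.\ $\gamma(t)\in B_{d_\Om(x,y)+\varepsilon}(x,d_\Om)$. Since $t\in[0,T]$ was arbitrary, $\gamma([0,T])\subseteq B_{d_\Om(x,y)+\varepsilon}(x,d_\Om)$, which is exactly the claim. (Note that the hypothesis \eqref{hormander} is not strictly needed for the inequality itself; it only guarantees that $d_\Om$ is genuinely a distance so that the ball notation is meaningful, and that $\Om$ with $d_\Om$ is a Carnot–Carathéodory space as set up before the lemma.)

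\textbf{Main obstacle.} There is essentially no hard analytic point here; the only thing requiring a line of care is the claim that truncating a sub-unit curve at an interior time $t$ produces a sub-unit curve of parameter-length $t$ joining $x$ to $\gamma(t)$ — this is immediate from the definitions since sub-unitness is a pointwise a.e.\ condition on $\dot\gamma$ and is preserved under restriction, and the parameter-length of $\gamma|_{[0,t]}$ is just $t$. One small subtlety worth mentioning is that we do not need $\gamma(t)\neq x$; if $\gamma(t)=x$ then $d_\Om(x,\gamma(t))=0<d_\Om(x,y)+\varepsilon$ trivially, so the conclusion holds in that degenerate case as well. Thus the whole proof is a two-line consequence of the definition of the Carnot–Carathéodory distance as an infimum.
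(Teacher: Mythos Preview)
Your proof is correct and uses essentially the same idea as the paper: both rely on the key observation that $d_\Om(x,\gamma(t))\le t$ for any $t\in[0,T]$, since the restriction $\gamma|_{[0,t]}$ is an admissible sub-unit curve. The only cosmetic difference is that the paper phrases this as a one-line proof by contradiction, whereas you argue directly.
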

\begin{proof}
	Let $x,y,\gamma$ and $\varepsilon$ as above. Assume by contradiction that there exists $\overline t\in(0,T)$ such that $d_\Om(x,\gamma(\overline t))\geq d_\Om(x,y)+\varepsilon$. Then it follows that
	$$d_\Om(x,y)+\varepsilon\leq d_\Om(x,\gamma(\overline t))\leq \overline t<T<d_\Om(x,y)+\varepsilon,$$
 which is a contradiction.
\end{proof}
\begin{prop}\label{prediffuno}
	Assume that $X$ satisfies \eqref{hormander}. Let $g\in C^1_X(\Om)$ and let $x\in\Om$. Then
	\begin{equation*}
		\limsup_{y\to x}\frac{|g(y)-g(x)|}{d_\Om(x,y)}\leq|Xg(x)|.
	\end{equation*}
\end{prop}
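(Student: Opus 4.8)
The plan is to estimate $g(y)-g(x)$ along a nearly-length-minimizing sub-unit curve joining $x$ to $y$, integrating the derivative of $g$ along the curve and controlling it by the size of the horizontal gradient of $g$ near $x$. Fix $\varepsilon>0$. For $y$ close to $x$ pick a sub-unit curve $\gamma\in AC([0,T],\Om)$ with $\gamma(0)=x$, $\gamma(T)=y$ and $T<d_\Om(x,y)+\varepsilon^2$ (or any quantity that is $o$ of what we need); write $\Dot\gamma(t)=\sum_{j=1}^m a_j(t)X_j(\gamma(t))$ with $\sum_j a_j(t)^2\le 1$ a.e. Since $g\in C^1_X(\Om)$, the chain rule (available here because $g$ is $C^1_X$; one can either invoke Proposition \ref{w2} with $u=g$, where by Proposition \ref{w1}(ii) the subgradient is the singleton $\{Xg\}$, or approximate $g$ by $C^\infty$ functions via Proposition \ref{c2c1} and pass to the limit) gives
\begin{equation*}
\frac{d}{dt}\,g(\gamma(t))=\sum_{j=1}^m a_j(t)\,X_jg(\gamma(t))=\langle Xg(\gamma(t)),a(t)\rangle
\end{equation*}
for a.e. $t\in[0,T]$, hence by Cauchy--Schwarz and sub-unitarity $\bigl|\tfrac{d}{dt}g(\gamma(t))\bigr|\le |Xg(\gamma(t))|$ for a.e. $t$.

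Next I would integrate: $|g(y)-g(x)|\le \int_0^T |Xg(\gamma(t))|\,dt \le T\cdot \sup_{t\in[0,T]}|Xg(\gamma(t))|$. By Lemma \ref{nellapalla}, the whole trajectory $\gamma([0,T])$ lies in the metric ball $B_{d_\Om(x,y)+\varepsilon^2}(x,d_\Om)$, which for $y\to x$ shrinks to $x$. Therefore
\begin{equation*}
\frac{|g(y)-g(x)|}{d_\Om(x,y)}\le \frac{T}{d_\Om(x,y)}\cdot \sup_{z\in B_{d_\Om(x,y)+\varepsilon^2}(x,d_\Om)}|Xg(z)| \le \Bigl(1+\frac{\varepsilon^2}{d_\Om(x,y)}\Bigr)\cdot \sup_{z\in B_{d_\Om(x,y)+\varepsilon^2}(x,d_\Om)}|Xg(z)|.
\end{equation*}
To make the error term harmless one should not fix $\varepsilon$ independently of $y$: instead choose, for each $y$, a competitor curve with $T<d_\Om(x,y)+d_\Om(x,y)^2$ (possible since $d_\Om(x,y)$ is the infimum), so that $T/d_\Om(x,y)\to 1$ and simultaneously the enclosing ball has radius $d_\Om(x,y)+d_\Om(x,y)^2\to 0$. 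Taking $\limsup_{y\to x}$ and using continuity of $Xg$ at $x$ (so that the sup over the shrinking balls converges to $|Xg(x)|$) yields $\limsup_{y\to x}\frac{|g(y)-g(x)|}{d_\Om(x,y)}\le |Xg(x)|$.

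The only genuinely delicate point is justifying the chain rule $\frac{d}{dt}g(\gamma(t))=\langle Xg(\gamma(t)),a(t)\rangle$ for a merely absolutely continuous (not smooth) sub-unit curve with an a.e.-defined control $a$; everything else is Cauchy--Schwarz plus the elementary Lemma \ref{nellapalla} plus continuity. This is exactly what Proposition \ref{w2} is designed to provide (it gives $\frac{d(g\circ\gamma)}{dt}=g(t)\cdot A(t)$ with $g(t)\in\sub g(\gamma(t))$, and for $g\in C^1_X$ Proposition \ref{w1}(ii) forces $g(t)=Xg(\gamma(t))$), after rewriting the sub-unit condition $\Dot\gamma=\sum_j a_jX_j(\gamma)$ in the matrix form $\Dot\gamma(t)=C(\gamma(t))^T\cdot a(t)$ required there. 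Note that this argument uses only the Hörmander condition \eqref{hormander} and not the extra hypothesis \eqref{lic}, consistent with the statement.
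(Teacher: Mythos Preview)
Your argument is correct and follows essentially the same route as the paper's proof: pick a near-optimal sub-unit curve, apply the chain rule along it (the paper cites \cite[Proposition~2.6]{PVW} directly, while your derivation via Propositions~\ref{w1}(ii) and~\ref{w2} is an equivalent justification), bound by $\sup|Xg|$ over the trajectory, confine the trajectory via Lemma~\ref{nellapalla}, and conclude by continuity of $Xg$. The only cosmetic difference is bookkeeping: the paper fixes $\varepsilon$, passes to the infimum over all competitor curves (so $T$ becomes $d_\Om(x,y)$), and then sends $\varepsilon\to 0$ and $y\to x$ separately, whereas you tie $\varepsilon$ to $d_\Om(x,y)$ and take a single limit; both are fine.
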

\begin{proof}
	Let $x$ and $g$ be as in the statement. Let $\tilde\Om\Subset \Om$ be an open and connected neighborhood of $x$, and let $\beta=C_{\tilde\Om}^{-1}$ be as in Proposition \ref{horm}. 
	Let $R>0$ be such that $\overline{B_{2R}(x,d_\Om)}\subseteq \tilde\Om$. Choose now $y\in B_{R}(x,d_\Om)$ and $0<\varepsilon\leq R$. Then, thanks to Proposition \ref{horm}, it follows that 
	\begin{equation}\label{ballsuno}
	\overline{B_{d_\Om(x,y)+\varepsilon}(x,d_\Om)}\subseteq\overline{B_{\beta d_\Om(x,y)+\beta\varepsilon}(x)}.
	\end{equation}
Moreover, if we let $M$ be the family of all sub-unit curves $\gamma:[0,T]\scu\Om$ connecting $x$ and 
$y$ and such that $T<d_\Om(x,y)+\varepsilon$, then it is clear that
 $$d_\Om(x,y)=\inf\{T\,:\,\gamma:[0,T]\scu\Om,\,\gamma\in M\}.$$ 
 Fix now a curve $\gamma:[0,T]\scu\Om$, $\gamma\in M$ with horizontal derivative $A$. Then, 
thanks to \eqref{ballsuno}, \cite[Proposition 2.6]{PVW} and Lemma \ref{nellapalla}, it follows that 
\begin{comment}
    |u(y)-u(x)|&=\left|\int_0^T\langle Xg(\gamma(t)),A(t)\rangle dt \right|\\
	&\leq \|Xg\|_{\infty,\overline{B_{d_\Om(x,y)+\varepsilon}(x,d_\Om)}}\int_0^T|A(t)|dt\\
	&\leq T\|Xg\|_{\infty,\overline{B_{\beta d_\Om(x,y)+\beta \varepsilon}(x)}}
\end{comment}
\begin{equation}
	|g(y)-g(x)|=\left|\int_0^T\langle Xg(\gamma(t)),A(t)\rangle dt \right|\leq T\|Xg\|_{\infty,\overline{B_{\beta d_\Om(x,y)+\beta \varepsilon}(x)}}
\end{equation}
Therefore, passing to the infimum over $M$, it follows that
\begin{equation*}
	\frac{|g(y)-g(x)|}{d_\Om(x,y)}\leq \|Xg\|_{\infty,\overline{B_{\beta d_\Om(x,y)+\beta \varepsilon}(x)}}.
\end{equation*}
The conclusion follows letting $\varepsilon\to 0^+$ and $y\to x$, together with the continuity of $Xg$.
\end{proof}
Now we state our main differentiability result.
\begin{prop}\label{differenziale}
	Assume that $X$ satisfies \eqref{hormander} and \eqref{lic}, let $u\in C^1_X(\Om)$ and $x\in \Om$.
	Then $u$ is $X$-differentiable at $x$ and 
	\begin{equation*}
		d_Xu(x)(z)=\langle Xu(x)\cdot\tilde{C}(x),z\rangle,
	\end{equation*}
 where $\tilde{C}$ is as in Proposition \ref{leftinv} and $z \in \rr^n$.
\end{prop}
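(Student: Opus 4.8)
The plan is to establish the differentiability estimate by a two-sided argument. The upper bound on the difference quotient is essentially already contained in Proposition \ref{prediffuno}, applied to the function $g(y):=u(y)-L_x(y-x)$, where $L_x(z):=\langle Xu(x)\cdot\tilde C(x),z\rangle$ is the candidate differential. The crucial observation is to compute $Xg$ and evaluate it at the point $x$: since $z\mapsto L_x(z)$ is linear in $z$, and since the vector fields $X_j$ act on the affine function $y\mapsto L_x(y-x)$ by differentiating it, one gets $X_j\bigl(L_x(\,\cdot\,-x)\bigr)(y)=\langle Xu(x)\cdot\tilde C(x),\text{(row $j$ of $C(y)$)}\rangle$, i.e. $X\bigl(L_x(\,\cdot\,-x)\bigr)(y)=C(y)\cdot\tilde C(x)^T\cdot Xu(x)^T$ as a column vector — I would write this carefully, keeping track of row/column conventions as in the preliminaries. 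Evaluating at $y=x$ and using the left-inverse identity $\tilde C(x)\cdot C(x)^T=I_m$ from Proposition \ref{leftinv}, this becomes exactly $Xu(x)$. Hence $Xg(x)=Xu(x)-Xu(x)=0$.

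Now $g\in C^1_X(\Om)$ (the affine correction is smooth, hence in $C^1_X$, because \eqref{hormander} holds and smooth functions are $C^1_X$), and $Xg(x)=0$. Applying Proposition \ref{prediffuno} to $g$ at the point $x$ therefore yields
\begin{equation*}
	\limsup_{y\to x}\frac{|g(y)-g(x)|}{d_\Om(x,y)}\leq |Xg(x)|=0,
\end{equation*}
which is precisely
\begin{equation*}
	\lim_{d_\Om(x,y)\to 0}\frac{u(y)-u(x)-L_x(y-x)}{d_\Om(x,y)}=0,
\end{equation*}
since $g(x)=u(x)$ and $g(y)-g(x)=u(y)-u(x)-L_x(y-x)$. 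This proves that $u$ is $X$-differentiable at $x$ with $d_Xu(x)=L_x$, as claimed. Note that the limit here is taken as $d_\Om(x,y)\to 0$, which by Proposition \ref{horm}(ii) and the remark following it is equivalent to $y\to x$ in the Euclidean topology, so the application of Proposition \ref{prediffuno} is legitimate.

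The only real point requiring care is the bookkeeping in the computation of $X\bigl(L_x(\,\cdot\,-x)\bigr)$ and the verification that its value at $x$ equals $Xu(x)$; this is where \eqref{lic} enters, through the existence and the defining identity of $\tilde C$ in Proposition \ref{leftinv}. Everything else is a direct reduction to Proposition \ref{prediffuno}. I would also remark that, since the left-inverse $\tilde C(x)$ of $C(x)^T$ need not be unique when $m<n$ (although the specific choice $\tilde C(x)=(C(x)C(x)^T)^{-1}C(x)$ is fixed), the $X$-differential produced this way is one admissible differential but not necessarily the only one — consistent with the non-uniqueness of $d_Xu$ already flagged in the introduction.
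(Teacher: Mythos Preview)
Your proposal is correct and follows essentially the same approach as the paper: define $g(y)=u(y)-\langle Xu(x)\cdot\tilde C(x),y-x\rangle$, compute $Xg(y)=Xu(y)-Xu(x)\cdot\tilde C(x)\cdot C(y)^T$ so that $Xg(x)=0$ via the left-inverse identity from Proposition~\ref{leftinv}, and then invoke Proposition~\ref{prediffuno}. Your additional remarks on the equivalence of topologies and on non-uniqueness of the $X$-differential are also consistent with the paper's discussion.
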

\begin{proof}
	Let $x\in \Om$ be fixed.
	Define $g:\Omega\scu\mathbb{R}$ as $g(y):=u(y)-h(y)$, where
	\begin{equation*}
		h(y)=\langle Xu(x)\cdot\tilde{C}(x),y-x\rangle.
	\end{equation*}
	Then clearly $g\in C^1_X(U)$. Moreover, by explicit computations, we get that
	\begin{equation*}
		\begin{split}
			Xg(y)&=Xu(y)-X(\langle Xu(x)\cdot\tilde{C}(x),y-x\rangle)\\
			&=Xu(y)-D(\langle Xu(x)\cdot\tilde{C}(x),y-x\rangle)\cdot C(y)^T\\
			&=Xu(y)-Xu(x)\cdot\tilde{C}(x)\cdot C(y)^T,
		\end{split}
	\end{equation*}
	which in particular implies that
	\begin{equation*}
		Xg(x)=0.
	\end{equation*}
	The conclusion then follows by invoking  Proposition \ref{prediffuno}.
\end{proof}
\begin{rem}
A careful look at the above proof reveals that the $X$-differential exists in a general Carnot-Carathéodory space, provided that the generating vector fields satisfies \eqref{lic} and that the induced Carnot-Carathéodory distance is continuous with respect to the Euclidean topology. We refer to the Appendix for some remarks.
\end{rem}
\begin{rem}

It is clear from the proof of Proposition \ref{differenziale} that the $X$-differential is non-unique in general. Indeed, Proposition \ref{differenziale} remains true if we let 
\begin{equation*}
		d_Xu(x)(z)=\langle Xu(x)\cdot D(x),z\rangle,
	\end{equation*}
where $D(x)$ is any left-inverse matrix of $C^T(x)$. Since for a non-squared matrix the left-inverse matrix is non-unique in general, the non-uniqueness of the $X$-differential follows. As an instance, consider the Heisenberg group $\mathbb{H}^1$, i.e. the step-$2$ Carnot group whose Lie algebra is generated by the vector fields
\begin{equation*}
    X=\frac{\partial}{\partial x}-y\frac{\partial}{\partial t},\qquad Y=\frac{\partial}{\partial y}+x\frac{\partial}{\partial t}.
\end{equation*}
It is easy to see that the matrices 
\begin{equation*}
  \tilde C(x,y)= \frac{1}{1+x^2+y^2}\left[ \begin{array}{ccc}
	1+x^2 & xy & -y \\
	xy & 1+y^2 & x
\end{array}
\right ],\qquad D= \left[ \begin{array}{ccc}
	1 & 0 & 0 \\
	0 & 1 & 0
\end{array}
\right ]
\end{equation*}
are both left-inverse matrices of 
\[
C(x,y)^T=\left[ \begin{array}{ccc}
	1 & 0  \\
	0 & 1 \\
-y & x
\end{array}
\right ]
\]
Nevertheless, if in a Carnot group we require in addition that the $X$-differential is $H$-linear, i.e. it commutes with the group operation and the intrinsic dilations, then it is unique and it coincides with the classical Pansu differential (cf. \cite{Pa, SC}). Finally, we point out that when $n=m$ and $X_1(x),\ldots, X_n(x)$ are linearly independet for any $x$, i.e. the Riemannian case, then the $X$-differential is unique since $\tilde{C}(x)=(C(x)^T)^{-1}$. 
\end{rem}

\subsection{Embedding Theorems}
In this section we recall some Morrey-Campanato type embedding that we will use later. In the setting of  H\"ormander vector fields the results were first proved in \cite{Lu98}, and it was later realized that they continue to hold in the general setting of metric measure spaces satisfying 
 doubling property and a Poincar\'e inequality (cf. \cite[Lemma 9.2.12]{HKST}).
If $\alpha\in(0,1)$, we define the \emph{Folland-Stein  H\"older spaces} as 
\begin{equation*}
    \hold{\Om}:=\left\{u:\Om\scu\overline\Ru\,:\,\sup_{x\neq y,\, x,y\in\Om}\frac{|u(x)-u(u)|}{d_\Om(x,y)^\alpha}<+\infty\right\}
\end{equation*}
and
\begin{equation*}
    \holdl{\Om}:=\left\{u:\Om\scu\overline\Ru\,:\,\sup_{x\neq y,\, x,y\in K}\frac{|u(x)-u(u)|}{d_\Om(x,y)^\alpha}<+\infty\text{ for any compact set $K\Subset\Om$}\right\}.
\end{equation*}
Moreover, when $E\subseteq\Om$ and $u:\Om\scu\overline\Ru$ we set
\begin{equation*}
    \|u\|_{0,\alpha,E}:=\sup_{x\in E}|u(x)|+\sup_{x\neq y,\, x,y\in E}\frac{|u(x)-u(u)|}{d_\Om(x,y)^{\alpha}}.
\end{equation*}
From these definitions it is clear that $$\hold{\Om}\subseteq\holdl{\Om}\subseteq C(\Om).$$ %
As usual, in order to define a notion of convergence on $\holdl{\Om}$, we say that a sequence $(u_h)_h\subseteq\holdl{\Om}$ converges to $u\in\holdl{\Om}$ if it holds that 
$$\lim_{h\to\infty}\|u_h-u\|_{0,\alpha,K}=0$$
for any compact set $K\Subset\Om$. If we fix an increasing sequence $(\Om_k)_k$ of open subsets of $\Om$ such that $\Om_k\Subset\Om_{k+1}\Subset\Om$ and $\bigcup_{k=1}^\infty\Om_k=\Om$, and for any $u,v\in\holdl{\Om}$ we define 
$$\varrho(u,v):=\sum_{k=1}^\infty\frac{1}{2^k}\min\{1,\|u-v\|_{0,\alpha,\Om_k}\},$$
it is easy to see that $\varrho$ is a translation-invariant distance on $\holdl{\Om}$ which induces the above-defined convergence in $\holdl{\Om}$.
Thanks to \cite{Lu98,HKST}, the following Morrey-Campanato type embedding theorem holds.
\begin{prop}\label{embeddings}
Assume that $X$ satisfies \eqref{hormander}. There exists $Q\in(1,\infty)$, which depends only on $n,\Om$ and $X$, such that the following facts hold:
\begin{itemize}
    \item[$(i)$] $\anso{\Om}\subseteq C_{X,loc}^{0,1-\frac{Q}{p}}(\Om)$ for any $p>Q$, and the inclusion is continuous.
    \item[$(ii)$] the inclusion $\anso{\Om}\subseteq C_{X,loc}^{0,\beta}(\Om)$ is compact for any $p>Q$ and for any $\beta\in[0,1-\frac{Q}{p})$.
    \item[$(iii)$] $W^{1,p}_{X,0}(\Om)\subseteq C_{X}^{0,1-\frac{Q}{p}}(\Om)\cap C(\overline\Om)$ for any $p>Q$.
    
\end{itemize}
\end{prop}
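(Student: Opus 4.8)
The plan is to reduce everything to the classical Morrey–Campanato machinery available in doubling metric measure spaces supporting a $(1,p)$-Poincaré inequality, and then read off the three assertions. First I would recall that under the Hörmander condition \eqref{hormander} the Carnot–Carathéodory space $(\Om, d_\Om, \mathcal L^n)$ is locally doubling and supports a local $(1,1)$-Poincaré inequality with respect to the horizontal gradient; these are the results of Nagel–Stein–Wainger and Jerison, and after freezing an intermediate domain $\tilde\Om$ with $\Om\Subset\tilde\Om$ (or working on relatively compact subdomains) one obtains genuine global doubling and Poincaré constants on $\tilde\Om$. With these structural properties in hand, the abstract embedding theorem \cite[Lemma 9.2.12]{HKST} (equivalently the original subelliptic version \cite{Lu98}) applies: there is an exponent $Q>1$, the local homogeneous dimension associated to $(n,\Om,X)$, such that functions in $W^{1,p}_X$ with $p>Q$ have Hölder continuous representatives with the sharp exponent $1-\tfrac{Q}{p}$, locally in $\Om$. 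This is exactly item $(i)$; the continuity of the inclusion comes from tracking the constants in the Morrey estimate, which are controlled by the doubling and Poincaré constants and hence are uniform on compact subsets.

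For item $(ii)$, the compactness of $\anso{\Om}\hookrightarrow C^{0,\beta}_{X,loc}(\Om)$ for $\beta<1-\tfrac{Q}{p}$, I would argue as in the Euclidean case: given a bounded sequence $(u_h)$ in $\anso{\Om}$, fix an exhausting family $(\Om_k)$ with $\Om_k\Subset\Om_{k+1}\Subset\Om$. On each $\Om_k$ the sequence is bounded in $C^{0,1-Q/p}_X(\Om_k)$ by $(i)$, hence equibounded and equicontinuous, so by Arzelà–Ascoli (which is valid for the metric space $(\overline{\Om_k},d_\Om)$, itself compact by Proposition \ref{horm}) a subsequence converges uniformly on $\overline{\Om_k}$. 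A diagonal argument over $k$ produces a subsequence converging uniformly on compacta to some $u$, and the standard interpolation inequality
\[
\|v\|_{0,\beta,\Om_k}\leq C\,\|v\|_{\infty,\Om_k}^{1-\beta/\alpha}\,\|v\|_{0,\alpha,\Om_k}^{\beta/\alpha},
\qquad \alpha:=1-\tfrac{Q}{p},
\]
upgrades uniform convergence to convergence in the $C^{0,\beta}_X$-seminorm on each $\Om_k$, i.e. convergence in the metric $\varrho$. This gives the asserted compactness.

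Item $(iii)$ is the global statement for the zero-trace space. Here the point is that a function $u\in W^{1,p}_{X,0}(\Om)$ can be extended by zero to a slightly larger domain $\Om_0\Supset\Om$ on which $X$ still satisfies \eqref{hormander}, and the extension lies in $W^{1,p}_X(\Om_0)$ with controlled norm (using that $C^\infty_c(\Om)$ is dense in $W^{1,p}_{X,0}(\Om)$, extend the smooth approximants by zero and pass to the limit). Applying $(i)$ on $\Om_0$ to the extension yields a representative in $C^{0,1-Q/p}_X$ on any compact subset of $\Om_0$, in particular on $\overline\Om$, with global Hölder seminorm bounded by $\|u\|_{\anso{\Om}}$; and since the extension vanishes identically outside $\Om$, the representative is continuous on $\overline\Om$ and vanishes on $\partial\Om$, i.e. it lies in $C^{0,1-Q/p}_X(\Om)\cap C(\overline\Om)$. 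I expect the main obstacle to be purely bookkeeping rather than conceptual: one must be careful that $Q$ is a single exponent working simultaneously for $(i)$–$(iii)$, that all doubling/Poincaré constants are taken on a fixed intermediate domain so that the estimates are genuinely uniform, and — for $(iii)$ — that the zero-extension argument does not secretly require boundary regularity of $\Om$ (it does not, precisely because one extends the Sobolev function and not a trace). Modulo these care points, everything follows from \cite{Lu98} and \cite[Lemma 9.2.12]{HKST} as cited.
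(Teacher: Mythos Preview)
Your proposal is correct and matches the paper's approach: the paper does not give a proof at all, but simply cites \cite{Lu98} and \cite[Lemma 9.2.12]{HKST} immediately before the statement, which are exactly the references you reduce to. Your write-up just supplies the details (doubling and Poincar\'e via Nagel--Stein--Wainger and Jerison, Arzel\`a--Ascoli plus interpolation for $(ii)$, zero-extension for $(iii)$) that the paper leaves implicit.
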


\subsection{Viscosity Solutions to First and Second-Order PDE}
Given a function $F:\Om\times\Ru\times\Rm\times S^m\scu\Ru$, we say that $F$ is \emph{horizontally elliptic} if 
\begin{equation*}
	F(x,s,p,X)\leq F(x,s,p,Y)
\end{equation*}
whenever $x\in \Om$, $s\in\Ru$, $p\in\Rm$ and $X,Y\in S^m$ with $Y\leq X$ (i.e. $X-Y$ is positive semidefinite). It is clear that when $F$ is independent of $X$, i.e. it describes a first-order differential operator, then it is automatically horizontally elliptic. Therefore this definition is relevant only when dealing with second-order differential operators. According to \cite{visco,wang}, we start by recalling the definition of viscosity solutions to first-order PDE. We point out that our notion of viscosity solution is a bit stronger than the one given in  \cite{wang}, since he considers test functions in $C^k_X$ rather than in $C^k$.

\begin{deff}\label{viscodeff}
	Let $H:\Om\times\Ru\times\Rm\scu\Ru$ be continuous. We say that $u\in USC(\Om)$ is a \emph{viscosity subsolution} to 
	\begin{equation}\label{hje}
		H(x,u(x),Xu(x))=0\quad\text{in }\Om
	\end{equation}
	if
	\begin{equation*}
		H(x,u(x_0),X\varphi(x_0))\leq 0
	\end{equation*}
	for any $x_0\in \Om$ and for any $\varphi\in C^1_X(\Om)$ such that 
	\begin{equation*}
		u(x_0)-\varphi(x_0)\geq u(x)-\varphi(x)
	\end{equation*}
	for any $x$ in a neighborhood of $x_0$.
	We say that $u\in LSC(\Om)$ is a \emph{viscosity supersolution} to \eqref{hje} if 
	\begin{equation*}
		H(x_0,u(x_0),X\varphi(x_0))\geq 0
	\end{equation*}
	for any $x_0\in \Om$ and for any $\varphi\in C^1_X(\Om)$ such that 
	\begin{equation*}
		u(x_0)-\varphi(x_0)\leq u(x)-\varphi(x)
	\end{equation*}
	for any $x$ in a neighborhood of $x_0$.
	Finally we say that $u$ is a \emph{viscosity solution} to \eqref{hje} if it is both a viscosity subsolution and a viscosity supersolution. 
\end{deff}

Similarly, we recall the definition of viscosity solutions to second-order horizontally elliptic partial differential equations.

\begin{deff}\label{secondcont}
	 Let $F:\Om\times\Ru\times\Rn\times S^m\scu\Ru$ be continuous and horizontally elliptic. We say that $u\in USC(U)$ is  a \emph{viscosity subsolution} to the equation
	\begin{equation}\label{second}
		F(x,w(x),Xw(x),X^2w(x))=0\quad\text{in }\ \Om
	\end{equation}
	if
	\begin{equation}\label{rem}
		F(x_0, u(x_0),X\varphi(x_0),X^2\varphi(x_0))\leq 0
	\end{equation}
	for any $x_0\in \Om$ and for any $\varphi\in C^2_X(\Om)$ such that 
	\begin{equation}\label{maximumvisco}
		u(x_0)-\varphi(x_0)\geq u(x)-\varphi(x)
	\end{equation}
	for any $x$ in a neighborhood of $x_0$.
	We say that $u\in LSC(\Om)$ is a \emph{viscosity supersolution} to \eqref{second} if 
	\begin{equation*}
		F(x_0,u(x_0),X\varphi(x_0),X^2\varphi(x_0))\geq 0
	\end{equation*}
	for any $x_0\in \Om$ and for any $\varphi\in C^2_X(\Om)$ such that 
	\begin{equation*}
		u(x_0)-\varphi(x_0)\leq u(x)-\varphi(x)
	\end{equation*}
	for any $x$ in a neighborhood of $x_0$.
	Finally we say that $u$ is a \emph{viscosity solution} to \eqref{second} if it is both a viscosity subsolution and a viscosity supersolution. 
	
\end{deff}

\begin{rem}
	As usual, when dealing with viscosity solutions to partial differential equations, there are many equivalent ways to define this notion. For instance, one can check the inequality \eqref{rem} only in the more restrictive case when in \eqref{maximumvisco} $x_0$ is a strict minimum point. 
 \begin{comment}
 Indeed, assume that \eqref{maximumvisco} holds, and define $\tilde\varphi(x):=\varphi(x)+|x-x_0|^4$. Then it is clear that 
	\begin{equation*}
	F(x_0, u(x_0),X\tilde\varphi(x_0),X^2\tilde\varphi(x_0))=F(x_0, u(x_0),X\varphi(x_0),X^2\varphi(x_0))
	\end{equation*} 
 and that 
 \begin{equation*}
 	u(x_0)-\tilde\varphi(x_0)> u(x)-\tilde\varphi(x)
 \end{equation*}
 for any $x$ in a neighborhood of $x_0$. 
 \end{comment}
 Moreover, one can equivalently require that 
 \begin{equation*}
 	F(x_0, \varphi(x_0),X\varphi(x_0),X^2\varphi(x_0))\leq 0
 \end{equation*} 
for any $x_0\in \Om$ and for any $\phi\in C^2_X(\Om)$ such that 
\begin{equation*}
	0=u(x_0)-\varphi(x_0)> u(x)-\varphi(x)
\end{equation*}
for any $x$ in a neighborhood of $x_0$. Similar equivalences hold for the other cases. Finally, we note that thanks to Proposition \ref{fermat}, it is not difficult to show that a  function in $C^1_X(\Om)$ (resp. $C^2_X(\Om)$) is a classical solution to \eqref{hje} (resp. \eqref{second}) if and only if it is a viscosity solution to \eqref{hje} (resp. \eqref{second}).
\end{rem}

\subsection{Supremal functionals and absolute minimizers}
In this section we recall the notion of supremal functional associated to suitable Hamiltonian functions, together with the related notions of absolute minimizers and absolute minimizing Lipschitz extensions. We refer to \cite{ACJ, bjw, crandall, wang} for an extensive account of the topic. Given a non-negative function $f\in C(\Om\times\Ru\times\Rm)$, we define its associated \emph{supremal functional} $F:\winfx{\Om}\times\mA\scu[0,+\infty]$ by
\begin{equation*}
    F(u,V):=\|f(x,u,Xu)\|_{L^\infty(V)}
\end{equation*}
for any $V\in\mA, u\in\winfx{V}$, where $\mA$ is the class of all
open subsets of $\Om$.  We say that $u\in\winfx{\Om}$ is an \emph{absolute minimizer} of $F$ if
\begin{equation*}
    F(u,V)\leq F(v,V)
\end{equation*}
for any $V\Subset \Om$ and for any $v\in\winfx{V}$ with $v|_{\partial V}=u|_{\partial V}$.
If $f$ belongs to $C^1(\Omega\times\Ru\times\Rm)$, we can define $A_f:\Om\times\Ru\times\Rm\times S^m\scu\Ru$ by
\begin{equation*}
    A_f(x,s,p,Y):=-(Xf(x,s,p)+D_sf(x,s,p)p+D_pf(x,s,p)\cdot Y)\cdot D_pf(x,s,p),
\end{equation*}
and we say that 
\begin{equation}\label{ae}
    A_f[\phi](x):=A_f(x,\phi,X\phi,X^2\phi)=0
\end{equation}
is the \emph{Aronsson equation} associated to $F$.
It is easy to check that $A_f$ is continuous and horizontally elliptic. In the Euclidean setting it is well known (\cite{bjw, cyw}) that, under suitable assumptions on the Hamiltonian function, absolute minimizer are viscosity solution to the Aronsson equation. The same kind of results holds in greater generality in the Carnot-Carathéodory setting (\cite{wang,wy,PVW}). %
 In the particular case in which $f(x,u,p)=|p|^2$, then absolute minimizers are known as \emph{absolute minimizing Lipschitz extensions} (AMLE for short). Moreover, its associated Aronsson equation becomes the well known \emph{infinite Laplace equation}
\begin{equation*}
	-\Delta_{X,\infty}\phi=0,
\end{equation*}
where the operator $\Delta_{X,\infty}$ is defined by
\begin{equation}
\label{eq:infXLap}
	\Delta_{X,\infty}w:=Xw\cdot X^2w\cdot Xw^T.
\end{equation}
The notions of AMLEs and the $\infty$-Laplace equation in the Euclidean setting have been extensively studied during the last fifty years (see for example \cite{aronsson1, aronsson2, aronsson3, jensen1} and references therein) and part of the theory has been extended to the setting of Carnot Groups and Carnot-Carath\'eodory spaces (see \cite{B2, BC, BDM, DMV, FM} and references therein).

\section{Viscosity and Almost Everywhere Solutions}
In this section we relate the notion of viscosity solutions to first-order partial differential equations to solutions defined through horizontal jets, extending the results of \cite{bieskeuno} to the Carnot-Carathéodory setting. Exploiting this relation we prove that almost everywhere subsolutions to quasiconvex first-order partial differential equations associated to a family of H\"ormander vector fields turns to be viscosity subsolutions.
The proof of this fact is divided in two steps. First we deal with a family $X$ of vector fields which satisfies \eqref{hormander} and the additional condition \eqref{lic}, in order to exploit Proposition \ref{differenziale}. Then, thanks to a lifting argument à la Rothschild-Stein (cf. \cite{RS}) we extend the result to an arbitrary family of H\"ormander vector fields. %
We begin by introducing the first-order horizontal subjet and superjet.
\begin{deff}
	Assume that $X$ satisfies \eqref{hormander} and \eqref{lic}. If $u\in USC(\Om)$ and $x_0\in\Om$, we define the \emph{first-order horizontal superjet} of $u$ at $x_0$ by 
	\begin{equation*}
		Xu^+(x_0):=\{p\in\Rm\,:\,u(x)\leq u(x_0)+\langle p\cdot\tilde{C}(x_0),x-x_0\rangle+o(d_\Om(x,x_0))\text{ as }d_\Om(x,x_0)\to 0\}.
	\end{equation*}
	If $u\in LSC(\Om)$ and $x_0\in\Om$, we define the \emph{first-order horizontal subjet} of $u$ at $x_0$ by 
	\begin{equation*}
		Xu^-(x_0):=\{p\in\Rm\,:\,u(x)
  \geq u(x_0)+\langle p\cdot\tilde{C}(x_0),x-x_0\rangle+o(d_\Om(x,x_0))\text{ as }d_\Om(x,x_0)\to 0\}.
	\end{equation*}
\end{deff}
In the Euclidean setting, it is well known that the notion of viscosity solution given in terms of comparison with sufficiently smooth  tests functions is equivalent to the notion involving jets. In our framework the following result still holds.

\begin{prop}\label{twodef}
	Assume that $X$ satisfies \eqref{hormander} and \eqref{lic}. The following facts hold.
	\begin{itemize}
		\item Assume that $u\in USC(\Om)$ satisfies
		\begin{equation*}
			H(x_0,u(x_0),p)\leq 0
		\end{equation*}
	for any $x_0\in\Om$ and $p\in Xu^+(x_0)$. Then $u$ is a viscosity subsolution to \eqref{hje}.
	\item Assume that $u\in LSC(\Om)$ satisfies
	\begin{equation*}
		H(x_0,u(x_0),p)\geq 0
	\end{equation*}
	for any $x_0\in\Om$ and $p\in Xu^-(x_0)$. Then $u$ is a viscosity supersolution to \eqref{hje}.
	\end{itemize}
\end{prop}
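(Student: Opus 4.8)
The plan is to show that comparison against $C^1_X$ test functions is subsumed by the jet formulation, so that the hypothesis on jets immediately yields the viscosity subsolution property; the supersolution case is entirely symmetric. First I would fix $x_0\in\Om$ and a test function $\varphi\in C^1_X(\Om)$ with $u(x_0)-\varphi(x_0)\geq u(x)-\varphi(x)$ for all $x$ in a neighborhood of $x_0$. The goal is to extract an element of $Xu^+(x_0)$ built out of $X\varphi(x_0)$, and then invoke the assumed inequality $H(x_0,u(x_0),p)\leq 0$.

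The key step is the following chain of identifications. Since $X$ satisfies \eqref{hormander} and \eqref{lic}, Proposition \ref{differenziale} applies to $\varphi$: it is $X$-differentiable at $x_0$ with
\begin{equation*}
	d_X\varphi(x_0)(z)=\langle X\varphi(x_0)\cdot\tilde C(x_0),z\rangle,
\end{equation*}
so that $\varphi(x)=\varphi(x_0)+\langle X\varphi(x_0)\cdot\tilde C(x_0),x-x_0\rangle+o(d_\Om(x,x_0))$ as $d_\Om(x,x_0)\to 0$. Combining this expansion with the comparison inequality $u(x)-u(x_0)\leq \varphi(x)-\varphi(x_0)$, valid near $x_0$, gives
\begin{equation*}
	u(x)\leq u(x_0)+\langle X\varphi(x_0)\cdot\tilde C(x_0),x-x_0\rangle+o(d_\Om(x,x_0)),
\end{equation*}
which is precisely the statement that $p:=X\varphi(x_0)\in Xu^+(x_0)$. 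By the hypothesis of the proposition, $H(x_0,u(x_0),X\varphi(x_0))\leq 0$, and since $x_0$ and $\varphi$ were arbitrary this is exactly Definition \ref{viscodeff} for a viscosity subsolution. For the supersolution half one reverses all inequalities: a lower test function $\varphi$ produces, via the same $X$-differentiability expansion, an element $X\varphi(x_0)\in Xu^-(x_0)$, and the assumed inequality $H(x_0,u(x_0),p)\geq 0$ on the subjet closes the argument.

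I do not expect a serious obstacle here: the proposition is essentially a repackaging lemma, and the only nontrivial input, the first-order Taylor-type expansion of a $C^1_X$ function with respect to $d_\Om$, is furnished verbatim by Proposition \ref{differenziale} (this is exactly why \eqref{lic} is assumed at this stage). The one point requiring a little care is purely bookkeeping: the comparison hypothesis in Definition \ref{viscodeff} is local ("for any $x$ in a neighborhood of $x_0$"), while the defining condition of $Xu^+(x_0)$ is an asymptotic statement as $d_\Om(x,x_0)\to 0$; since the Euclidean and $d_\Om$ topologies coincide under \eqref{hormander} (Proposition \ref{horm} and the remark following it), shrinking the neighborhood to a $d_\Om$-ball is harmless, so the local comparison does imply the asymptotic inequality. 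Everything else is a direct substitution.
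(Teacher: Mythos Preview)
Your proposal is correct and follows essentially the same approach as the paper's own proof: fix $x_0$ and a $C^1_X$ test function $\varphi$ touching $u$ from above, apply Proposition \ref{differenziale} to expand $\varphi$ at $x_0$, combine with the touching inequality to conclude $X\varphi(x_0)\in Xu^+(x_0)$, and then invoke the hypothesis. Your additional remark on the equivalence of the Euclidean and $d_\Om$ topologies is a helpful clarification but not a departure from the paper's argument.
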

\begin{proof}
	Since the two statements follow from similar arguments, we prove only the first one. Let $x_0\in\Om$ and let $\varphi\in C^1_X(\Om)$ be an admissible function in the definition of viscosity subsolution. Then, thanks to Proposition \ref{differenziale}, we obtain
	\begin{equation*}
		\begin{split}
			u(x)&=u(x_0)+u(x)-u(x_0)\leq u(x_0)+\varphi(x)-\varphi(x_0)\\
			&=u(x_0)+\langle X\varphi(x_0)\cdot\tilde{C}(x_0),x-x_0\rangle+o(d_X(x,x_0)).
		\end{split}
	\end{equation*}
Therefore one has $X\varphi(x_0)\in Xu^+(x_0)$. In view of  the hypothesis then one has 
\begin{equation*}H(x_0,u(x_0),X\varphi(x_0))\leq 0, \end{equation*} concluding the proof.
\end{proof}
To establish our desired implication we need some technical, but still intuitive, preliminary results, which are based on the notion of $(X,N)$-subgradient previously introduced.
\begin{prop}\label{zerosta}
	Assume that $X$ satisfies \eqref{hormander}. Let $u\in W_{X,loc}^{1,\infty}(\Om)$ and assume that $x_0\in\Om$ is either a point of local minimum or a point of local maximum for $u$. Then $0\in\sub u(x_0)$. 
\end{prop}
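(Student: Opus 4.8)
The plan is to reduce the statement to the one-variable situation handled by Proposition \ref{w2}, exactly as in the proof of Proposition \ref{fermat}. Assume, with no loss of generality, that $x_0$ is a point of local maximum for $u$; the minimum case is symmetric. First I would fix an arbitrary direction $\xi\in\Rm$ and invoke the existence (via the H\"ormander condition) of a horizontal curve $\gamma\in\mathrm{AC}([-\beta,\beta],\Om)$ with $\gamma(0)=x_0$ and $\dot\gamma(t)=C(\gamma(t))^T\cdot\xi$ for a.e.\ $t$, whose image stays in the neighborhood of $x_0$ on which $u$ attains its maximum; such a curve exists for $\beta$ small by standard ODE theory, since the coefficients $c_{j,i}$ are smooth. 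Then the scalar function $h(t):=u(\gamma(t))$ has a local maximum at $t=0$.

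Next I would apply Proposition \ref{w2} with $A(t)\equiv\xi$ and $p=\infty$: this gives $h\in W^{1,\infty}(-\beta,\beta)$ together with a function $g\in L^\infty((-\beta,\beta),\Rm)$ such that $h'(t)=g(t)\cdot\xi$ for a.e.\ $t$, and moreover $g(t)\in\sub u(\gamma(t))$ for a.e.\ $t$. Since $h$ is Lipschitz and has a local maximum at $0$, for a.e.\ small $t>0$ we have $h'(t)\le 0$ (along a sequence of differentiability points $t_k\downarrow 0$ with $g(t_k)\in\sub u(\gamma(t_k))$), and symmetrically $h'(t)\ge 0$ for a.e.\ small $t<0$. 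Passing to the limit $t_k\to 0^+$ along such points and using that $\sub u$ is a closed set which is upper semicontinuous along $\gamma$ (the limit of $g(t_k)$, extracted from the bounded sequence $\{g(t_k)\}$, lies in $\sub u(x_0)$ by the very definition of $\sub u$ as a closed convex hull of limits of $Xu$-values, together with Proposition \ref{w1}), one obtains a vector $q\in\sub u(x_0)$ with $q\cdot\xi\le 0$. The analogous argument from the left yields $q'\in\sub u(x_0)$ with $q'\cdot\xi\ge 0$.

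Now I would use the convexity of $\sub u(x_0)$, guaranteed by Proposition \ref{w1}$(i)$, to conclude. For each $\xi$ we have produced elements of the convex compact set $\sub u(x_0)$ on opposite sides of the hyperplane $\{p:p\cdot\xi=0\}$, hence $\min_{p\in\sub u(x_0)}p\cdot\xi\le 0\le\max_{p\in\sub u(x_0)}p\cdot\xi$ for every $\xi\in\Rm$. If $0\notin\sub u(x_0)$, then by the Hahn--Banach separation theorem applied to the point $0$ and the compact convex set $\sub u(x_0)$ there would be a $\xi\in\Rm$ with $p\cdot\xi>0$ for all $p\in\sub u(x_0)$, contradicting $\min_{p\in\sub u(x_0)}p\cdot\xi\le 0$. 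Therefore $0\in\sub u(x_0)$, as claimed.

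The main obstacle I anticipate is the passage to the limit identifying a subsequential limit of $\{g(t_k)\}$ as an element of $\sub u(x_0)$: one must be careful that the $g(t_k)$ are genuine subgradient values at points $\gamma(t_k)$ converging to $x_0$, and then argue that the closed convex hull defining $\sub u(x_0)$ absorbs such limits — this is essentially a compactness-plus-closedness statement, and it is worth checking it against the precise definition of $\sub u$ and the stability properties recorded in Proposition \ref{w1}. Everything else is routine one-variable calculus and elementary convex geometry.
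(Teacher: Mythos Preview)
Your overall strategy is sound and leads to the same conclusion, but the step you correctly flag as the ``main obstacle'' is not justified by what you write. The assertion that a subsequential limit $q$ of $g(t_k)\in\sub u(\gamma(t_k))$ lies in $\sub u(x_0)$ does \emph{not} follow ``by the very definition'': the definition only puts limits of \emph{values of $Xu$} at points $y_n\to x_0$ (with $y_n\notin N$) into $\sub u(x_0)$, whereas your $g(t_k)$ are elements of the closed convex hulls $\sub u(\gamma(t_k))$, not values of $Xu$. What you actually need is the closed-graph (upper semicontinuity) property of the multimap $x\mapsto\sub u(x)$, and Proposition~\ref{w1} does not record this. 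It \emph{is} true, but proving it amounts to the following: if $q\notin\sub u(x_0)$, separate $q$ from the compact convex set $\sub u(x_0)$ by a hyperplane $\{p:\langle p,a\rangle=-\alpha\}$, argue that $\langle Xu(y),a\rangle<-\alpha$ for all $y\in B_r(x_0)\setminus N$ and some $r>0$ (otherwise a sequence of violators would produce a point of $\sub u(x_0)$ on the wrong side), and deduce $\sub u(y)\subseteq\{p:\langle p,a\rangle\le-\alpha\}$ for all $y\in B_r(x_0)$, contradicting $g(t_k)\to q$. This is precisely the technical claim the paper establishes inline.

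Once that gap is filled, your argument and the paper's are reorderings of the same ingredients. The paper proceeds more economically: it separates $0$ from $\sub u(x_0)$ \emph{first}, obtaining a single direction $a$; proves the semicontinuity claim only in that direction; and then runs the curve argument once, along $\dot\gamma=C(\gamma)^T a$, to reach a contradiction with extremality of $x_0$. You instead run the curve argument for \emph{every} $\xi$, extract limits in $\sub u(x_0)$ on both sides of each hyperplane $\{p\cdot\xi=0\}$, and invoke separation only at the end. Both routes work, but the paper's ordering avoids the need to pass to limits of subgradients altogether (the integral $\int_0^{1/n}\langle g(t),a\rangle\,dt\le-\alpha/n$ is negative for every $n$, no limiting subgradient needed), which is why it sidesteps the very obstacle you identified.
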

\begin{proof}
	We prove the statement assuming that $x_0$ is a minimum point, since the argument for  the other case is analogous. Assume by contradiction that $0\notin\sub u(x_0)$. Since $\sub u(x_0)$ is convex and compact, then  by the hyperplane separation theorem there exists $a\in\Rm$ and $\alpha>0$ such that 
	\begin{equation}\label{hb}
		\max_{p\in\sub u(x_0)}\langle p,a\rangle <-\alpha.
	\end{equation}
	Now we claim that there exists $r>0$ such that
	\begin{equation}\label{claim}
		\langle p,a\rangle \leq -\alpha
	\end{equation}
for any $p\in\sub u(y)$ and for any $y\in B_r(x_{0})$.
To prove this fact we first show that there exists $r>0$ such that
\begin{equation*}
	\langle Xu(y),a\rangle<-\alpha
\end{equation*}
for any $y\in B_r(x_0)\setminus N$. If it is not the case, then there is a sequence $(y_n)_n\subseteq \Rn\setminus N$ such that $y_n\to x_0$ and
\begin{equation}\label{34}
\langle Xu(y_n),a\rangle\geq-\alpha.
\end{equation}
Moreover, since $u\in W^{1,\infty}_{X,loc}(\Omega)$ we can assume that up to a subsequence
$$\exists \lim_{n\to\infty}Xu(y_n)=:p,$$
and by construction we have that $p\in\sub u(x_0)$.
Therefore, recalling \eqref{hb} and \eqref{34}, we conclude that 
$$-\alpha\leq\lim_{n\to\infty}\langle Xu(y_n),a\rangle=\langle p,a\rangle<-\alpha,$$ 
which is a contradiction. 
Let us now define $$A:=\{p\in\Rm\,:\,\langle p,a\rangle\leq-\alpha\},$$
and, for any $y\in B_r(x_0)$, the set
$$S_y:=\left\{\lim_{n\to\infty} Xu(y_n)\,:\,y_n\to y,\,y_n\notin N\right\}$$
so that $\sub u(y)=\ch (S_y)$. Since $A$ is convex and closed, our claim is proved if we show that $S_y\subseteq A$. Let us take a sequence $(y_n)_n$ converging to $y$ and such that $y_n\notin N$ and the sequence $Xu(y_n)$ has a limit. Then up to a subsequence we have that $(y_n)_n\subseteq B_r(x_0)\setminus N$, and so thanks to the previous claim we conclude that 
$$\lim_{n\to\infty} \langle Xu(y_n),a \rangle \leq-\alpha.$$
Hence $S_y\subseteq A$, and so \eqref{claim} is proved. Let now $\gamma:[0,1]\scu\Om$ be a solution to 
\begin{equation}\label{ode}
	\begin{cases}
		\Dot{\gamma}(t)=C(\gamma(t))^T\cdot a \\ \gamma(0)=x_0.
	\end{cases}
\end{equation}
Then by construction $\gamma$ is a horizontal curve. Moreover, if we define $x_n:=\gamma(\frac{1}{n})$, it follows that $x_n\to x_0$, and so up to a subsequence we can assume that $(x_n)_n\subseteq \gamma ([0,\delta])\subseteq B_r(x_0)$ for some $\delta>0$ small enough. Therefore, thanks to these facts, Proposition \ref{w2} and \eqref{claim}, there exists $g\in L^\infty(0,1)$ such that $g(t)\in\sub u(\gamma(t))$ for a.e. $t\in (0,1)$ and
\begin{equation*}
	u(x_n)-u(x_0)=u\left(\gamma\left(\frac{1}{n}\right)\right)-u(\gamma(0))=\int_0^\frac{1}{n}\langle g(t),a\rangle dt\leq-\frac{\alpha}{n}<0.
\end{equation*}
Therefore we conclude that $u(x_0)>u(x_n)$ for any $n\in\mathbb{N}$, which is a contradiction with the fact that $x_0$ is a point of local minimum.
\end{proof}
\begin{prop}\label{sublin}
	Assume that $X$ satisfies \eqref{hormander}. Let $u,v\in W^{1,\infty}_{X,loc}(\Om)$ and let $N$ be a negligible set which contains the non-Lebesgue points of $Xu$ and $Xv$. Then
	\begin{equation*}
		\sub (u-v)(x)\subseteq \sub u(x)-\sub v(x)
	\end{equation*}
for any $x\in\Om$.
\end{prop}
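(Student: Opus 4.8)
The plan is to work directly with the definition of the $(X,N)$-subgradient and reduce everything to a statement about limits of ordinary (Lebesgue) horizontal gradients. Fix $x\in\Om$ and let $q\in\sub(u-v)(x)$. By definition, $\sub(u-v)(x)=\ch S$, where $S$ is the set of limits $\lim_n X(u-v)(y_n)$ over sequences $y_n\to x$ with $y_n\notin N$; note that since $N$ contains the non-Lebesgue points of both $Xu$ and $Xv$, it also contains those of $X(u-v)$, so this $N$ is admissible for $u-v$ as well, and moreover $X(u-v)(y)=Xu(y)-Xv(y)$ pointwise off $N$. The right-hand side $\sub u(x)-\sub v(x)=\ch S_u - \ch S_v$ is itself convex and closed (it is the Minkowski difference of two compact convex sets, hence compact and convex, by Proposition \ref{w1}(i)). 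Since the convex hull of $S$ is the smallest convex set containing $S$, it suffices to prove the inclusion $S\subseteq \sub u(x)-\sub v(x)$; the claim for $q\in\ch S$ then follows because the right-hand side is convex.

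So take $p=\lim_n\big(Xu(y_n)-Xv(y_n)\big)\in S$, with $y_n\to x$ and $y_n\notin N$. The key step is a compactness extraction: since $u,v\in W^{1,\infty}_{X,loc}(\Om)$, the sequences $(Xu(y_n))_n$ and $(Xv(y_n))_n$ are bounded in $\Rm$ (they are essentially bounded near $x$, and $y_n$ are Lebesgue points), so after passing to a subsequence we may assume $Xu(y_n)\to p_u$ and $Xv(y_n)\to p_v$ for some $p_u,p_v\in\Rm$. By the very definition of the subgradient, $p_u\in S_u\subseteq\sub u(x)$ and $p_v\in S_v\subseteq\sub v(x)$. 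On the other hand, passing to the limit along this subsequence in $Xu(y_n)-Xv(y_n)$ gives $p=p_u-p_v$. Hence $p\in\sub u(x)-\sub v(x)$, which establishes $S\subseteq\sub u(x)-\sub v(x)$ and completes the argument.

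The only mild subtlety — and the step I would be most careful about — is the bookkeeping around the negligible set $N$: one must make sure that the \emph{same} set $N$ serves simultaneously as an admissible exceptional set for $u$, for $v$, and for $u-v$, so that the three subgradients $\sub u(x)$, $\sub v(x)$, $\sub(u-v)(x)$ are all computed with respect to it (the definition of $\sub$ a priori allows any negligible set containing the relevant non-Lebesgue points, and the value of the subgradient does not depend on this choice, but it is cleanest to fix one $N$ working for all three). Given that, the rest is a routine diagonal/subsequence argument together with the closure and convexity properties recorded in Proposition \ref{w1}; no use of the H\"ormander condition beyond what is already built into the definition of $W^{1,\infty}_{X,loc}$ is needed here, so the hypothesis \eqref{hormander} in the statement is used only to guarantee that the ambient $(X,N)$-subgradient theory of \cite{PVW} applies.
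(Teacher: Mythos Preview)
Your proposal is correct and follows essentially the same route as the paper's own proof: reduce to showing that the generating set $S$ of limits lies in $\sub u(x)-\sub v(x)$ (using that the latter is convex and closed), then extract convergent subsequences of $Xu(y_n)$ and $Xv(y_n)$ via the local $L^\infty$ bound to split the limit as a difference. Your write-up is in fact more explicit than the paper's about why a common $N$ works and why convexity/closedness of the target set suffices; the only cosmetic quibble is that what you call the ``Minkowski difference'' is really the Minkowski sum $\sub u(x)+(-\sub v(x))$, but this does not affect the argument.
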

\begin{proof}
	Fix $x\in\Om$. Since $\sub u(x)-\sub v(x)$ is convex and closed, it suffices to show that the set
	$$\left\{\lim_{n\to\infty}X(u-v)(y_n)\,:\,y_n\notin N,\,y_n\to x\right\}$$
	is contained in $\sub u(x)-\sub v(x)$. Therefore let $(y_n)_n\subseteq \Rn\setminus N$ be such that $y_n\to x$. Since $u,v\in W^{1,\infty}_{X,loc}(\Om)$ we can assume that, up to a subsequence, both the limits of $(Xu(y_n))_n$ and $(Xv(y_n))_n$ exist. Therefore it follows that
	\begin{equation*}
		\lim_{n\to\infty}X(u-v)(y_n)=\lim_{n\to\infty}(Xu(y_n)-Xv(y_n))=\lim_{n\to\infty}Xu(y_n)-\lim_{n\to\infty}Xv(y_n).
	\end{equation*}
Since the right hand side belongs to $\sub u(x)-\sub v(x)$, the thesis follows.
\end{proof}
\begin{prop}\label{varisubdif}
	Assume that $X$ satisfies \eqref{hormander} and \eqref{lic}. Let $x_0\in\Om$, $u\in W^{1,\infty}_{X,loc}(\Om)$ and $N$ be a negligible set which contains the non-Lebesgue points of $Xu$ and $d_{\Omega}(\cdot,x_0)$.
	Then
	\begin{equation*}
		Xu^+(x_0)\cup Xu^-(x_0)\subseteq \sub u(x_0).
	\end{equation*}
\end{prop}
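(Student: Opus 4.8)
The plan is to show that if $p \in Xu^+(x_0)$, then $p \in \sub u(x_0)$; the case $p \in Xu^-(x_0)$ is entirely analogous and follows by replacing $u$ with $-u$ (noting that $X(-u)^-(x_0) = -Xu^+(x_0)$ and $\sub(-u)(x_0) = -\sub u(x_0)$, the latter being immediate from the definition of the $(X,N)$-subgradient). So fix $p \in Xu^+(x_0)$, and consider the auxiliary function $v(x) := u(x) - \langle p \cdot \tilde C(x_0), x - x_0\rangle$, which belongs to $W^{1,\infty}_{X,loc}(\Om)$ since the linear function $x \mapsto \langle p \cdot \tilde C(x_0), x - x_0 \rangle$ is smooth. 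The defining inequality for the superjet reads $u(x) \le u(x_0) + \langle p \cdot \tilde C(x_0), x - x_0\rangle + o(d_\Om(x,x_0))$, which is exactly $v(x) \le v(x_0) + o(d_\Om(x,x_0))$ as $d_\Om(x,x_0) \to 0$.

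The key step is to convert this "approximate local maximum" statement into an honest local maximum, so that Proposition \ref{zerosta} can be applied. The standard trick is to perturb $v$ by a function that dominates the $o(d_\Om)$-error near $x_0$. I would set $w(x) := v(x) - \psi(d_\Om(x,x_0))$ where $\psi$ is chosen so that $\psi(t)/t \to 0$ as $t \to 0^+$ but $\psi$ still beats the error term — for instance one can take $\psi(t) = \omega(t)\, t$ for a suitable modulus $\omega$, or more concretely argue directly: for every $\eps > 0$ there is $r_\eps > 0$ with $v(x) - v(x_0) \le \eps\, d_\Om(x,x_0)$ on $B_{r_\eps}(x_0, d_\Om)$. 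The subtlety here is that $d_\Om(\cdot, x_0)$ need not be smooth, so $w$ need not be $C^1_X$; but it is Lipschitz with respect to $d_\Om$, hence lies in $W^{1,\infty}_{X,loc}(\Om)$, which is all Proposition \ref{zerosta} requires. Applying Proposition \ref{zerosta} at the local max point of $w$ — or rather, applying it to a sequence of perturbations indexed by $\eps$ and passing to the limit — one gets $0 \in \sub w(x_0)$. Since $N$ contains the non-Lebesgue points of both $Xu$ and $d_\Om(\cdot,x_0)$, Proposition \ref{sublin} applies and yields $\sub w(x_0) \subseteq \sub v(x_0) - \sub(\psi(d_\Om(\cdot,x_0)))(x_0)$, and $\sub v(x_0) = \sub u(x_0) - \{p \cdot \tilde C(x_0) \cdot C(x_0)^T\} = \sub u(x_0) - \{p\}$ by the computation in the proof of Proposition \ref{differenziale} together with Proposition \ref{w1}(ii).

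The main obstacle is handling the perturbation term $\psi(d_\Om(\cdot,x_0))$ cleanly, since the Carnot–Carathéodory distance is not differentiable and its $(X,N)$-subgradient must be controlled. The cleanest route is probably to avoid introducing $\psi$ altogether: instead, for each $\eps>0$ directly compare $v$ against the radial Lipschitz bump and extract, via Proposition \ref{w2} applied along the integral curve $\gamma$ of $C(\gamma(\cdot))^T \cdot a$ (exactly as in the proof of Proposition \ref{zerosta}), that for every unit direction $a$ one has $\sup_{q \in \sub v(x_0)} \langle q, a \rangle \ge -\eps \cdot \mathrm{const}$; letting $\eps \to 0$ and using that $a$ is arbitrary together with the convexity and closedness of $\sub v(x_0)$ gives $0 \in \sub v(x_0)$, i.e. $p \in \sub u(x_0)$. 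One must be slightly careful that the factor relating $d_\Om$ to arc length of $\gamma$ stays bounded near $x_0$, which is guaranteed by Proposition \ref{horm}(ii) (or rather by sub-unitarity, $d_\Om(x_0, \gamma(t)) \le t$). I expect the write-up to mirror closely the contradiction argument already deployed in Proposition \ref{zerosta}, with the approximate-maximum inequality playing the role that the genuine local-minimum hypothesis played there.
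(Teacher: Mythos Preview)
Your proposal is essentially correct and follows the same strategy as the paper, but you overcomplicate the perturbation step. The paper does not bother with a modulus $\psi$ satisfying $\psi(t)/t\to 0$, nor does it redo the contradiction argument of Proposition \ref{zerosta} by hand. Instead it simply takes the \emph{linear} perturbation
\[
v_n(x)=u(x)-\langle p\cdot\tilde C(x_0),x-x_0\rangle-\tfrac{1}{n}\,d_\Om(x,x_0),
\]
observes that the superjet inequality $v(x)\le v(x_0)+o(d_\Om(x,x_0))$ forces $x_0$ to be an honest local maximum of $v_n$ for each fixed $n$, and applies Proposition \ref{zerosta} and Proposition \ref{sublin} directly. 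The point you were worried about---controlling $\sub\bigl(\tfrac{1}{n}d_\Om(\cdot,x_0)\bigr)(x_0)$---is handled by the elementary fact that $|Xd_\Om(\cdot,x_0)|\le 1$ a.e., so this subgradient sits inside $\overline{B_{1/n}(0)}$. One then has $0\in\sub u(x_0)-\{p\}-B_{1/n}(0)$ for every $n$, and letting $n\to\infty$ together with closedness of $\sub u(x_0)$ gives $p\in\sub u(x_0)$.

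Your alternative ``cleanest route'' (pushing the integral-curve argument through directly for every direction $a$ and every $\eps$) would also work, but it amounts to reproving Proposition \ref{zerosta} inside this proof rather than invoking it, and so is strictly more laborious. The linear-in-$d_\Om$ perturbation is the key simplification you were missing.
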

\begin{proof}
	Fix $x_0\in\Om$ and $N$ as in the statement. We only show that $Xu^+(x_0)\subseteq\sub u(x_0)$, being the proof of the other inclusion completely analogous. Let $p\in Xu^+(x_0)$. For any $n\in\mathbb{N}\setminus\{0\}$, we define 
	$$v_n(x):=u(x)-\langle p\cdot \tilde{C}(x_0),x-x_0\rangle-\frac{1}{n}d_\Om(x,x_0).$$
	Using \cite{GN} it is easy to see that $v_n\in W^{1,\infty}_{X,loc}(\Om)$ and that $v_n(x_0)=u(x_0)$. Moreover, since $p\in Xu^+(x_0)$, it follows that
	\begin{equation*}
		\begin{split}
			v_n(x)&=v_n(x_0)+u(x)-u(x_0)-\langle p\cdot \tilde{C}(x_0),x-x_0\rangle-\frac{1}{n}d_\Om(x,x_0)\\
			&\leq v_n(x_0)-\frac{1}{n}d_\Om(x,x_0)+o(d_\Om(x,x_0))
		\end{split}
	\end{equation*}
as $d_\Om(x,x_0)\to 0$, thus
\begin{equation*}
	\begin{split}
	v_n(x_0)&\geq v_n(x)+\frac{1}{n}d_\Om(x,x_0)+o(d_\Om(x,x_0))\\
	&=v_n(x)+\frac{1}{n}d_\Om(x,x_0)\left[1+\frac{o(d_\Om(x,x_0))}{d_\Om(x,x_0)}\right]
	\end{split}
\end{equation*}
as $d_\Om(x,x_0)\to 0$. Therefore $x_0$ is a point of local maximum of $v_n$ which together with Proposition \ref{zerosta} and Proposition \ref{sublin} gives
$$0\in\sub u(x_0)-\sub (\langle p\cdot\tilde{C}(x_0),\cdot-x_0\rangle))(x_0)-\sub \left(\frac{1}{n}d_\Om(\cdot,x_0)\right)(x_0).$$ 
We start by noticing that $x\mapsto \langle p\cdot\tilde{C}(x_0),x-x_0\rangle$ is in $C^1(\Om)\subseteq C^1_X(\Om)$, and so, thanks to Proposition \ref{w1}, it follows that
$$\sub (\langle p\cdot\tilde{C}(x_0),\cdot-x_0\rangle)(x_0)=\{X (\langle p\cdot\tilde{C}(x_0),\cdot-x_0\rangle)(x_0)\}=\{p\cdot\tilde{C}(x_0)\cdot C(x_0)^T\}=\{p\}.$$
Moreover, thanks for instance to \cite{GN}, we know that $|X(\frac{1}{n}d_{\Omega}(\cdot,x_0))(x)|\leq \frac{1}{n}$ for a.e. $x\in\Om$, and using the definition of $X-$subdifferential we infer
$$\sub \left(\frac{1}{n}d_\Om(\cdot,x_0)\right)(x_0)\subseteq B_{\frac{1}{n}}(0).$$
Putting all together we get that 
\begin{equation*}
	0\in\sub u(x_0)-\{p\}-B_{\frac{1}{n}}(0)
\end{equation*}
for any $n\in\mathbb{N}\setminus\{0\}$. Since $\bigcap_{n=1}^{\infty}B_{\frac{1}{n}}(0)=\{0\}$, we conclude that
$$0\in\sub u(x_0)-\{p\}-\{0\}=\sub u(x_0)-\{p\},$$
which is the thesis.
\end{proof}
We have developed all the tools that we need to prove the main result assuming \eqref{lic}. Before giving the precise statement, let us recall that a function $g:\Rm\to\Ru$ is said to be \emph{quasiconvex} whenever
$$g(t\xi+(1-t)\eta)\leq\max\{g(\xi),g(\eta)\}$$
for any $\xi,\eta\in\Rm$ and $t\in[0,1]$, or equivalently when its sublevel sets
$$\{\xi\in\Rm\,:\,g(\xi)\leq\alpha\}$$
are convex for any $\alpha\in\Ru.$ Clearly every convex function is quasiconvex. Moreover, if $g$ is convex and $h:\Ru\to\Ru$ is non-decreasing, it is easy to check that $h\circ g$ is quasiconvex.
\begin{prop}\label{aev}
	Assume that $X$ satisfies \eqref{hormander} and \eqref{lic}. Let $H:\Om\times\Ru\times \Rn\scu\Ru$ be a continuous function such that $p\mapsto H(x,u,p)$ is quasiconvex for any $x\in\Om$ and any $u\in\Ru$. Let $u\in W_{X,loc}^{1,\infty}(\Om)$ be such that 
	\begin{equation}\label{aex}
		H(x,u(x),Xu(x))\leq 0
	\end{equation}
	for a.e. $x\in\Om$. Then $u$ is a viscosity subsolution to \eqref{hje}.
\end{prop}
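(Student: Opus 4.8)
The plan is to reduce the problem to the pointwise information provided by Proposition \ref{varisubdif} and Proposition \ref{twodef}, using the quasiconvexity of $H$ in the gradient slot as the glue. By Proposition \ref{twodef}, it suffices to show that $H(x_0,u(x_0),p)\le 0$ for every $x_0\in\Om$ and every $p\in Xu^+(x_0)$. Fix such an $x_0$ and $p$. Choose a negligible set $N$ containing the non-Lebesgue points of $Xu$ and of $d_\Om(\cdot,x_0)$; then Proposition \ref{varisubdif} gives $p\in\sub u(x_0)$. By the very definition of the $(X,N)$-subgradient, $\sub u(x_0)=\ch(S_{x_0})$ where $S_{x_0}$ is the set of limits $\lim_n Xu(y_n)$ along sequences $y_n\to x_0$, $y_n\notin N$. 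Since almost every point is a Lebesgue point at which \eqref{aex} holds, we may enlarge $N$ (still keeping it negligible) so that $H(y,u(y),Xu(y))\le 0$ at every $y\notin N$.

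The key step is then a Carathéodory-type argument. Since $p\in\ch(S_{x_0})$ and $\ch(S_{x_0})\subseteq\Rm$, by Carathéodory's theorem $p$ is a convex combination $p=\sum_{j=1}^{m+1}\lambda_j q_j$ of points $q_j\in\overline{S_{x_0}}$ (one must be slightly careful, since $\ch$ is the closed convex hull; a standard approximation reduces to finite convex combinations of elements of $S_{x_0}$ itself, and then one passes to the limit at the end using continuity of $H$). Each $q_j$ is a limit of horizontal gradients $Xu(y^{(j)}_n)$ with $y^{(j)}_n\to x_0$; by continuity of $H$ in all variables and of $u$, and since $H(y^{(j)}_n,u(y^{(j)}_n),Xu(y^{(j)}_n))\le 0$, passing to the limit gives $H(x_0,u(x_0),q_j)\le 0$ for each $j$. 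Now quasiconvexity of $p\mapsto H(x_0,u(x_0),p)$ yields
\begin{equation*}
H\Bigl(x_0,u(x_0),\textstyle\sum_{j=1}^{m+1}\lambda_j q_j\Bigr)\le\max_{j}H(x_0,u(x_0),q_j)\le 0,
\end{equation*}
that is $H(x_0,u(x_0),p)\le 0$. This is exactly the jet inequality needed to invoke Proposition \ref{twodef}, completing the proof.

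The main obstacle, as I see it, is handling the closure in the definition of $\ch$ cleanly: one needs that membership of $p$ in the \emph{closed} convex hull of $S_{x_0}$ still allows the quasiconvexity estimate to be pushed through. The safe route is to first prove the inequality $H(x_0,u(x_0),p)\le 0$ for $p$ in the (non-closed) convex hull $\operatorname{co}(S_{x_0})$ via the finite Carathéodory representation above, and then observe that the set $\{p\,:\,H(x_0,u(x_0),p)\le 0\}$ is closed (continuity of $H$) and contains $\operatorname{co}(S_{x_0})$, hence contains its closure $\ch(S_{x_0})\ni p$. A secondary technical point is the enlargement of $N$: one must intersect the full-measure set where \eqref{aex} holds with the full-measure set of Lebesgue points of $Xu$, and ensure this is compatible with the set $N$ fixed when applying Proposition \ref{varisubdif}; since any negligible superset of the bad set is admissible in the definition of $\sub u$, this causes no difficulty. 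Everything else is a routine limiting argument exploiting the continuity hypotheses on $H$ and the local boundedness of $Xu$ coming from $u\in W^{1,\infty}_{X,loc}(\Om)$.
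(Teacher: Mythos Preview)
Your proof is correct and follows the same skeleton as the paper's: reduce via Proposition~\ref{twodef} to the jet inequality, invoke Proposition~\ref{varisubdif} to place $p\in\sub u(x_0)$, and then use quasiconvexity in $p$ to pass from the a.e.\ inequality to the subgradient. The only difference is that the paper outsources your Carath\'eodory/closure step to \cite[Lemma~2.7]{PVW}, which asserts exactly that $H(x,u(x),p)\le 0$ for every $p\in\sub u(x)$ under the quasiconvexity hypothesis; you have essentially reproved that lemma inline. One minor simplification: since $u\in W^{1,\infty}_{X,loc}$, the set $S_{x_0}$ is bounded and (by a diagonal argument) already closed, hence compact, so $\ch(S_{x_0})=\operatorname{co}(S_{x_0})$ in $\Rm$ and your closure workaround is not actually needed.
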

\begin{proof}
	We already know that $u\in C(\Om)$. In view of Proposition \ref{twodef} it suffices to show that
	\begin{equation*}
		H(x_0,u(x_0),p)\leq 0
	\end{equation*}
	for any $x_0\in\Om$ and for any $p\in Xu^+(x_0)$. Fix then $x_0\in\Om$, and let $N$ be a negligible set which contains the non-Lebesgue points of $Xu$ and of $Xd_\Om(\cdot,x_0)$ and the points where \eqref{aex} is not satisfied. Then thanks to \cite[Lemma 2.7]{PVW} we know that 
	\begin{equation}\label{penultima}
		H(x,u(x),p)\leq 0
	\end{equation}
	for any $x\in\Om$ and for any $p\in \sub u(x)$. Therefore, thanks to the choice of $N$, we can apply Proposition \ref{varisubdif}, which combined with \eqref{penultima} allows to conclude that 
	\begin{equation*}
		H(x_0,u(x_0),p)\leq 0
	\end{equation*}
	for any $p\in Xu^+(x_0)$. Being $x_0$ arbitrary, the thesis follows.
\end{proof}
Exploiting the previous result and the lifting scheme in \cite{RS}, we can finally drop hypothesis \eqref{lic} and prove the following theorem.
\begin{thm}\label{aevhorm}
	Let $X$ satisfy \eqref{hormander}.
	Let $H:\Om\times\Ru\times \Rn\scu\Ru$ be a continuous function such that $p\mapsto H(x,u,p)$ is quasiconvex for any $x\in\Om$ and $u\in\Ru$. Let $u\in W_{X,loc}^{1,\infty}(\Om)$ be such that \eqref{aex} holds  for a.e. $x\in\Om$. Then $u$ is a viscosity subsolution to \eqref{hje}.
\end{thm}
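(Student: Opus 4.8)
The plan is to reduce Theorem \ref{aevhorm} to Proposition \ref{aev}, where the additional assumption \eqref{lic} is available, by means of the Rothschild--Stein lifting construction \cite{RS}. Recall that the lifting theorem produces, in a neighborhood of any given point, a new family of vector fields $\widetilde X=(\widetilde X_1,\ldots,\widetilde X_m)$ on an open set $\widetilde\Om\subseteq\Rn\times\Rk=\R^N$ which still satisfies H\"ormander's condition \eqref{hormander}, which is \emph{free up to the relevant step} and in particular satisfies \eqref{lic} (the lifted fields are linearly independent at every point, since a free nilpotent Lie algebra has no relations among the generators in degree one), and which projects onto $X$ under the canonical submersion $\pi:\widetilde\Om\scu\Om$, $\pi(x,\xi)=x$, in the sense that $\widetilde X_j(f\circ\pi)=(X_jf)\circ\pi$ for every $j$ and every $f\in C^1_X(\Om)$. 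Since the statement is local (being a viscosity subsolution is a local property, and one may cover $\Om$ by such lifting neighborhoods), it suffices to prove the claim on each such neighborhood.

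The key steps, in order, are as follows. First I would fix $x_0\in\Om$ and a lifting neighborhood, obtaining $\widetilde X$ on $\widetilde\Om$ with $\pi$ as above. Second, I would set $\widetilde u:=u\circ\pi$ and check that $\widetilde u\in W^{1,\infty}_{\widetilde X,loc}(\widetilde\Om)$ with $\widetilde X\widetilde u=(Xu)\circ\pi$ a.e.; this follows from the intertwining relation and the fact that $\pi$ is a submersion, so that preimages of negligible sets are negligible and Lebesgue points are essentially preserved along the fibers. Third, I would introduce the lifted Hamiltonian $\widetilde H:\widetilde\Om\times\Ru\times\Rm\scu\Ru$ by $\widetilde H(y,s,p):=H(\pi(y),s,p)$, which is continuous and, crucially, still quasiconvex in the $p$ variable for each fixed $(y,s)$, since quasiconvexity of $p\mapsto H(\pi(y),s,p)$ is inherited verbatim from that of $p\mapsto H(x,s,p)$. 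Then $\widetilde H(y,\widetilde u(y),\widetilde X\widetilde u(y))=H(\pi(y),u(\pi(y)),Xu(\pi(y)))\leq 0$ for a.e. $y\in\widetilde\Om$, because the exceptional set upstairs is the $\pi$-preimage of a null set. Fourth, I would apply Proposition \ref{aev} to $\widetilde X$, $\widetilde H$, $\widetilde u$ on $\widetilde\Om$, concluding that $\widetilde u$ is a viscosity subsolution of $\widetilde H(y,\widetilde u,\widetilde X\widetilde u)=0$ on $\widetilde\Om$. Fifth, and last, I would push this down: given $\varphi\in C^1_X(\Om)$ touching $u$ from above at $x_0$, the function $\widetilde\varphi:=\varphi\circ\pi\in C^1_{\widetilde X}(\widetilde\Om)$ touches $\widetilde u$ from above at any point $y_0$ in the fiber over $x_0$, and $\widetilde X\widetilde\varphi(y_0)=X\varphi(x_0)$, so the viscosity inequality for $\widetilde u$ at $y_0$ reads $H(x_0,u(x_0),X\varphi(x_0))=\widetilde H(y_0,\widetilde u(y_0),\widetilde X\widetilde\varphi(y_0))\leq 0$, which is exactly the desired inequality downstairs.

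The main obstacle I expect is the second step: verifying carefully that $\widetilde u=u\circ\pi$ genuinely lies in $W^{1,\infty}_{\widetilde X,loc}$ with the expected horizontal gradient \emph{almost everywhere}, and that the a.e.\ subsolution inequality genuinely transfers. The subtlety is that $u$ is only Sobolev, so $Xu$ is defined a.e., and one must ensure that the bad set for $u$ pulls back to a bad set of measure zero for $\widetilde u$ and that no new bad behavior is created transversally to the fibers; this is where the submersion structure of $\pi$ (it is, in the Rothschild--Stein coordinates, essentially a coordinate projection) and Fubini's theorem do the work. A secondary technical point is that the lifting is only local, so one must make sure the viscosity property — which is itself local — is correctly recombined over a covering of $\Om$; this is routine but should be stated. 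Everything else (continuity and quasiconvexity of $\widetilde H$, the chain rule $\widetilde X(\varphi\circ\pi)=(X\varphi)\circ\pi$ for $C^1_X$ functions, and the equality of horizontal gradients along fibers) is a direct consequence of the intertwining relation and requires no real calculation.
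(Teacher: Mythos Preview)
Your proposal is correct and follows essentially the same route as the paper: lift locally so that \eqref{lic} holds, transfer the a.e.\ inequality and the test-function touching condition upstairs via $\widetilde u=u\circ\pi$ and $\widetilde\varphi=\varphi\circ\pi$, and then invoke the \eqref{lic} case (Proposition \ref{aev}) to conclude. The only cosmetic differences are that the paper uses a more elementary lifting than the full Rothschild--Stein free construction---it simply adds $\partial/\partial t_i$ to the linearly dependent generators, $\bar X_i=X_i+\partial/\partial t_i$, which already suffices for \eqref{lic} and \eqref{hormander}---and it unpacks the superjet inclusion $X\varphi(x_0)\in\bar X\bar u^+(x_0,0)$ explicitly rather than citing Proposition \ref{aev} as a black box; your packaging is arguably cleaner.
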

\begin{proof}
	As usual we can assume $u\in C(\Om)$. Let $x_0\in\Om$ and let $\varphi\in C^1_X(\Om)$ be such that there exists an open neighborhood $U$ of $x_0$ in $\Om$ such that
	\begin{equation}\label{viscoineq}
	    u(x)-u(x_0)\leq\varphi(x)-\varphi(x_0)
	\end{equation}
	for any $x\in U$. Invoking an argument as in \cite[Part II]{RS} one has that there exists an open and connected neighborhood $V\subseteq U$ of $x_0$, $r\in\mathbb{N}$ with $0\leq r < m$, and $\delta>0$ such that, setting $V_\delta:=V\times (-\delta,\delta)^r$, $t=(t_1,\ldots,t_r)$,
	\begin{equation*}
	    \bar X_i(x,t):=X_i(x)
	\end{equation*}
	for $i=1,\ldots,m-r$ and
	\begin{equation*}
	    \bar X_i(x,t):=X_i(x)+\frac{\partial}{\partial t_i}
	\end{equation*}
	for $i=m-r+1,\ldots,m$, (where
 we have assumed that, up to reordering, the vector fields  $X_1,\ldots,X_{m-r}$ are linearly independent at $x_0$),
	then $\bar X:=(\bar X_1,\ldots,\bar X_m)$ are linearly independent and satisfy the H\"ormander condition at every point $(x,t)\in V_\delta$. 
	Denote by $d_{\bar X}$ the Carnot-Carathéodory distance induced by $\bar X$ on $V_\delta$. 
	It is clear that given $v\in W^{1,1}_{X,loc}(\Om)$ and setting $\bar v(x,t):=v(x)$ for any $(x,t)\in V_{\delta}$, then
\begin{equation}\label{barnonbar}
    \bar{X}\bar{v}(x,t)=Xv(x).
\end{equation}
 Therefore it is easy to see that $\bar u\in W^{1,\infty}_{\bar X,loc}(V_\delta)$ and $\bar\varphi\in C^1_{\bar X}(V_\delta).$ Moreover, \eqref{viscoineq} implies that
	\begin{equation*}
	    \bar u(x,t)-\bar u (x_0,0)\leq\bar \varphi(x,t)-\bar\varphi(x_0,0)
	\end{equation*}
	for any $(x,t)\in V_\delta$, which is an open neighborhood of $(x_0,0)$. Therefore, proceeding as in  the proof of Proposition \ref{twodef} and using \eqref{viscoineq} and \eqref{barnonbar} we get that
	\begin{equation} \label{meta}
	    X\varphi(x_0)\in \bar{X}\bar u^+(x_0,0),
	\end{equation}
    where the horizontal superjet is considered with respect to the Carnot - Carath\'odory distance induced by the family $\bar{X}$, $d_{\bar X}$ on $V_\delta$. To conclude the proof, set 
	\begin{equation*}
	    \bar H(x,t,s,p):=H(x,s,p)
	\end{equation*}
	for any $(x,t)\in V_\delta$, $s\in\Ru$ and $p\in\Rm$. It is clear that $\bar H$ is continuous and that $p\mapsto\bar H(x,t,s,p)$ is quasiconvex for any $(x,t)\in V_\delta$ and $s\in\Ru$. We show that \eqref{aex} implies that
		\begin{equation}\label{suba}
			\bar{H}(x_0,t_0,\bar u(x_0,t_0),p)\leq 0
		\end{equation}
	for any $(x_0,t_0)\in V_\delta$ and for any $p\in \bar X\bar u^+(x_0,t_0)$. This and \eqref{meta} allow to conclude. To prove \eqref{suba} it suffices to notice that by \eqref{aex} it holds that
	\begin{equation*}
	    \bar H(x,t,\bar u(x,t),\bar X\bar u(x,t))=H(x,u(x),Xu(x))\leq 0
	\end{equation*}
	for a.e. $(x,t)\in V_\delta$. Then \eqref{suba} follows as in the proof of Proposition \ref{aev}.
\end{proof}

\section{Some Properties of the $p$-Poisson Equation}
In this section we study some properties of the $p$-Poisson equation associated to a family $X$ of vector fields. 
From now on, unless otherwise specified, we assume that $X$ satisfies the H\"ormander condition on a domain $\Om_0$, with $\Om\Subset\Om_0$. The reason for which we require the H\"ormader condition to be satisfied on $\Om_0$ is twofold. On the one hand, we will need to exploit Theorem \ref{poinc}. On the other hand, at some stage we will need to give a meaning to the Carnot-Carathéodory distance from $\partial\Om$.\\
\begin{comment}
    Let $p\in (1,+\infty)$ and $p'=\frac{p}{p-1}$. We say that a function $u\in\anso{\Om}$ is a \emph{weak subsolution} to the $p$-Poisson equation
\begin{equation}\label{pp}
	-\diverx(|Xw|^{p-2}Xw)=f \qquad  \mbox{in } \Om,
\end{equation}
for a given datum $f\in L^{p'}(\Om)$, if
\begin{equation}\label{weaksubsol}
	\int_\Om|Xu|^{p-2}Xu\cdot X\varphi dx\leq \int_\Om f\varphi dx
\end{equation}
for any non-negative $\varphi\in W^{1,p}_{X,0}(\Om)$.
Analogously, $u$ is a \emph{weak supersolution} to the $p$-Poisson equation if 
\begin{equation}\label{weaksupersol}
	\int_\Om|Xu|^{p-2}Xu\cdot X\varphi dx\geq \int_\Om f\varphi dx
\end{equation}
for any non-negative $\varphi\in W^{1,p}_{X,0}(\Om)$.
\end{comment}
Let $p\in (1,+\infty)$ and $p'=\frac{p}{p-1}$. We say that a function $u\in\anso{\Om}$ is a \emph{weak subsolution} (\emph{weak supersolution}) to the $p$-Poisson equation
\begin{equation}\label{pp}
	-\diverx(|Xw|^{p-2}Xw)=f \qquad  \mbox{in } \Om,
\end{equation}
for a given datum $f\in L^{p'}(\Om)$, if
\begin{equation*}\label{weaksubsol}
	\int_\Om|Xu|^{p-2}\langle Xu, X\varphi\rangle\,  dx\leq\,(\geq)\, \int_\Om f\varphi dx
\end{equation*}
for any non-negative $\varphi\in W^{1,p}_{X,0}(\Om)$.
Finally, $u$ is a \emph{weak solution} to the $p$-Poisson equation if it is both a weak subsolution and a weak supersolution, i.e. if 
\begin{equation}\label{weaksolp}
	\int_\Om|Xu|^{p-2}\langle Xu, X\varphi\rangle\,dx= \int_\Om f\varphi dx
\end{equation}
for any $\varphi\in W^{1,p}_{X,0}(\Om)$.
We begin our investigation with an existence result to the minimization problem associated to \eqref{pp}.

\begin{prop}\label{esun}
	Let $p\in(1,\infty)$, $f\in L^{p'}(\Om$), $g\in \anso{\Om}$ and let us define the functional $I_p:W^{1,p}_{X,g}(\Om)\scu\Ru$ by
	\begin{equation}
	\label{eq:Ipfunctional}
		I_p(u):=\frac{1}{p}\int_\Om|Xu|^pdx-\int_\Om fu\,dx.
	\end{equation}
	Then there exists a unique $u_p\in W^{1,p}_{X,g}(\Om)$ such that
	\begin{equation}\label{epp}
		I_p(u_p)=\min_{u\in W^{1,p}_{X,g}(\Om)}I_p(u).
	\end{equation}
	Moreover, if $p\geq 2$, $u_p$ is the unique weak solution to \eqref{pp}.
\end{prop}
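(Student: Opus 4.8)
The plan is to prove existence and uniqueness of the minimizer via the direct method of the calculus of variations, and then to identify the minimizer with the weak solution through a convexity (monotonicity) argument.

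\textbf{Existence.} First I would observe that $I_p$ is bounded below on $W^{1,p}_{X,g}(\Om)$. Indeed, by Young's inequality, for every $\eps>0$ one has $\left|\int_\Om fu\,dx\right|\le \eps\|u\|_{L^p(\Om)}^p + C(\eps)\|f\|_{L^{p'}(\Om)}^{p'}$, and then by Corollary \ref{quasipoincare} the term $\|u\|_{L^p(\Om)}^p$ is controlled by $K(1+\|Xu\|_{L^p(\Om)}^p)$; choosing $\eps$ small enough, this shows $I_p(u)\ge \tfrac{1}{2p}\|Xu\|_{L^p(\Om)}^p - C$, so $I_p$ is bounded below and coercive with respect to $\|Xu\|_{L^p(\Om)}$, hence (again by Corollary \ref{quasipoincare}) with respect to the full norm $\|\cdot\|_{\anso{\Om}}$ on the affine space $W^{1,p}_{X,g}(\Om)$. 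Take a minimizing sequence $(u_k)_k$; by coercivity it is bounded in $\anso{\Om}$, which is reflexive for $1<p<\infty$, so up to a subsequence $u_k\rightharpoonup u_p$ weakly in $\anso{\Om}$, and $u_p\in W^{1,p}_{X,g}(\Om)$ because $W^{1,p}_{X,0}(\Om)$ is a closed subspace, hence weakly closed, and $u_k-g\rightharpoonup u_p-g$. The functional $u\mapsto \tfrac1p\int_\Om|Xu|^p\,dx$ is convex and strongly continuous on $\anso{\Om}$, hence weakly lower semicontinuous, while $u\mapsto\int_\Om fu\,dx$ is weakly continuous (it is a bounded linear functional, and one can use Proposition \ref{riesz} with $g_0=f$, $g_j=0$). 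Therefore $I_p(u_p)\le\liminf_k I_p(u_k)=\inf I_p$, so $u_p$ is a minimizer.

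\textbf{Uniqueness of the minimizer.} This follows from strict convexity. The map $p\mapsto|p|^p$ on $\Rm$ is strictly convex for $p>1$, so $u\mapsto\int_\Om|Xu|^p\,dx$ is convex, and it is strictly convex modulo the kernel of $X$: if $\int_\Om|X(\tfrac{u+v}{2})|^p\,dx=\tfrac12\int_\Om|Xu|^p+\tfrac12\int_\Om|Xv|^p$ then $Xu=Xv$ a.e., and since $u-v\in W^{1,p}_{X,0}(\Om)$ has $X(u-v)=0$, the Poincaré inequality of Theorem \ref{poinc} forces $u=v$. Combined with linearity of the $f$-term, $I_p$ is strictly convex on $W^{1,p}_{X,g}(\Om)$ in this sense, so the minimizer is unique.

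\textbf{Identification with the weak solution.} Since $I_p$ is convex and Gâteaux differentiable (the differentiability of $t\mapsto|Xu+tX\varphi|^p$ under the integral is justified by dominated convergence, using $p\ge 2$ to control the difference quotients, and here is where that hypothesis enters), a point $u_p$ is a minimizer if and only if the Euler–Lagrange equation $\frac{d}{dt}\Big|_{t=0}I_p(u_p+t\varphi)=0$ holds for all $\varphi\in W^{1,p}_{X,0}(\Om)$, which is precisely \eqref{weaksolp}. Conversely any weak solution is a critical point of the convex functional $I_p$ hence a minimizer, and by the uniqueness above it equals $u_p$; alternatively, uniqueness of the weak solution can be seen directly from the strict monotonicity of the operator $Xw\mapsto|Xw|^{p-2}Xw$, subtracting the weak formulations for two solutions and testing with their difference. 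The main obstacle I anticipate is a clean justification of differentiation under the integral sign for the $p$-energy term when $p$ is close to (but at least) $2$; everything else is a routine application of the direct method together with the Poincaré inequality already available as Theorem \ref{poinc} and Corollary \ref{quasipoincare}.
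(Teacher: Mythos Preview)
Your proof is correct and follows essentially the same approach as the paper: direct method (coercivity via Corollary \ref{quasipoincare}, weak lower semicontinuity from convexity, reflexivity, weak closedness of the affine constraint), strict convexity for uniqueness, and identification of the minimizer with the weak solution via the Euler--Lagrange equation. The only minor remark is that your attribution of the hypothesis $p\ge 2$ to the justification of differentiation under the integral sign is not quite on target---the G\^ateaux derivative of $u\mapsto\int_\Om|Xu|^p\,dx$ exists for all $p>1$ by a standard dominated-convergence argument---but this does not affect the correctness of your argument in the stated range.
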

\begin{proof}
	We wish to apply the direct method of the calculus of variations. To this aim, we notice that $W^{1,p}_{X,g}(\Om)$ is a closed and convex subset of $\anso{\Om}$, and so it is weakly closed. Moreover, $I_p$ is strictly convex and strongly lower semicontinuous, and so it is weakly sequentially lower semicontinuous. 
	Finally, thanks to Corollary \eqref{quasipoincare} and the H\"older inequality it follows that
	\begin{equation*}
		\begin{split}
			\int_\Om|Xu|^pdx- \int_\Om f u &\ge \min\left\{\frac{1}{2},\frac{1}{2K}\right\}\|u\|_{W^{1,p}_X}^p- \|f\|_{L^{p'}} \|u\|_{L^p}-\frac{1}{2}\\
			& \ge \min\left\{\frac{1}{2},\frac{1}{2K}\right\}\|u\|_{W^{1,p}_X}^p- \|f\|_{L^{p'}}\|u\|_{W^{1,p}_X}-\frac{1}{2} \to + \infty
		\end{split}
	\end{equation*}
	as $\|u\|_{W^{1,p}_X} \to + \infty$. Therefore $I_p$ is sequentially weakly coercive. Hence there exists $u_p\in W^{1,p}_{X,g}(\Om)$ which minimizes $I_p$. The strict convexity of $I_p$ yields the uniqueness of such a minimizer. It is now standard calculus to observe that a function $u$ minimizes $I_p$ if and only if it is a weak solution to \eqref{pp}. 
\end{proof}

As in the Euclidean setting (cf. \cite{L} for an elementary proof) the following comparison principle holds.
\begin{lem}\label{comparison}
	Let $u,v\in C^0(\overline\Om)$ be a weak subsolution and a weak supersolution to \eqref{pp} respectively. Then the following facts hold:
	\begin{itemize}
		\item [$(i)$] If $u\leq v$ on $\partial\Om$, then $u\leq v$ on $\Om$.
		\item[$(ii)$]It holds that
		\begin{equation*}
			\sup_{x\in\Om}(u-v)\leq\sup_{x\in\partial\Om}(u-v).
		\end{equation*}
	\end{itemize}
	Moreover, if $u,v$ are both weak solutions, it holds that 
	$$\|u-v\|_{\infty,\Om}\leq\|u-v\|_{\infty,\partial\Om}.$$
\end{lem}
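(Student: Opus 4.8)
Here is my plan for proving the comparison principle, Lemma~\ref{comparison}.

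\textbf{Approach.} The natural strategy is the classical Lindqvist-style argument adapted to the subelliptic setting, testing the weak formulations of the sub- and supersolution against the positive part of their difference. First I would prove $(i)$. Suppose $u \leq v$ on $\partial\Om$ and, towards a contradiction, that the open set $\Om^+ := \{x \in \Om : u(x) > v(x)\}$ is nonempty. The function $\varphi := (u-v)^+$ belongs to $C^0(\overline\Om)$, vanishes on $\partial\Om$ since $u \leq v$ there, and one checks that $\varphi \in W^{1,p}_{X,0}(\Om)$: indeed $\varphi = 0$ outside $\Om^+$, and on $\Om^+$ one has $X\varphi = Xu - Xv$, so $\varphi$ is an admissible nonnegative test function. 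Using $\varphi$ in the weak subsolution inequality for $u$ and in the weak supersolution inequality for $v$, and subtracting, gives
\begin{equation*}
\int_{\Om^+} \left(|Xu|^{p-2}Xu - |Xv|^{p-2}Xv\right) \cdot (Xu - Xv)\, dx \leq \int_{\Om^+} f(\varphi - \varphi)\, dx = 0.
\end{equation*}
The integrand on the left is pointwise nonnegative by the standard monotonicity inequality for the map $\xi \mapsto |\xi|^{p-2}\xi$ on $\Rm$ (strictly positive unless $Xu = Xv$), hence $Xu = Xv$ a.e. on $\Om^+$, i.e. $X\varphi = 0$ a.e. on $\Om$. Since $X$ satisfies \eqref{hormander}, the Corollary following Proposition~\ref{w2} forces $\varphi$ to be constant on each connected component of $\Om$; as $\varphi$ vanishes on $\partial\Om$ (using continuity up to the boundary and that each component of a bounded domain has boundary points), $\varphi \equiv 0$, contradicting $\Om^+ \neq \emptyset$. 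Hence $u \leq v$ on $\Om$.

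\textbf{Deriving $(ii)$ and the solution estimate.} For $(ii)$, set $M := \sup_{\partial\Om}(u - v)$, which is finite by continuity on the compact set $\overline\Om$ (if $M = +\infty$ there is nothing to prove, and if $M < 0$ one can still run the argument, or simply note $u \le v + M$ on $\partial \Om$). The constant function $v + M$ is again a weak supersolution to \eqref{pp} — adding a constant does not change the horizontal gradient, hence leaves the weak formulation unchanged — and $u \leq v + M$ on $\partial\Om$. Applying part $(i)$ with $v$ replaced by $v + M$ yields $u \leq v + M$ on $\Om$, i.e. $\sup_\Om(u - v) \leq M = \sup_{\partial\Om}(u - v)$. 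Finally, if $u$ and $v$ are both weak solutions, then each is simultaneously a weak subsolution and a weak supersolution, so $(ii)$ applies both to $u - v$ and to $v - u$, giving $\sup_\Om|u-v| = \max\{\sup_\Om(u-v), \sup_\Om(v-u)\} \leq \max\{\sup_{\partial\Om}(u-v), \sup_{\partial\Om}(v-u)\} = \|u-v\|_{\infty,\partial\Om}$, which is the asserted bound.

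\textbf{Main obstacle.} The routine analytic inputs — the monotonicity inequality for the $p$-gradient nonlinearity and the admissibility $\varphi \in W^{1,p}_{X,0}(\Om)$ — transfer verbatim from the Euclidean case, so the only genuinely subelliptic point is concluding that $X\varphi = 0$ a.e. forces $\varphi$ constant; this is exactly the content of the Corollary to Proposition~\ref{w2}, which relies on the H\"ormander condition \eqref{hormander} (a function with vanishing horizontal gradient need not be locally constant for a general frame). One should also be slightly careful that $(u-v)^+$ genuinely lies in the trace-zero space $W^{1,p}_{X,0}(\Om)$ and not merely in $W^{1,p}_X(\Om)$ with zero boundary values in a weaker sense; since $u, v \in C^0(\overline\Om)$ and $(u-v)^+$ vanishes on a neighborhood of $\partial\Om$ inside $\Om$ up to the boundary — more precisely, it is the uniform limit of its truncations $(u - v - \eps)^+$, each of which is compactly supported in $\Om$ — a standard cutoff-and-mollify argument (using Proposition~\ref{c2c1}) places it in $W^{1,p}_{X,0}(\Om)$. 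This is the step that requires the most care, though it is still essentially standard.
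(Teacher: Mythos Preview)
Your proof is correct and follows essentially the same route as the paper's (commented-out) argument: test with the positive part of $u-v$ to obtain, via monotonicity of $\xi\mapsto|\xi|^{p-2}\xi$, that the horizontal gradient of the test function vanishes, then use connectedness under H\"ormander's condition to force the function to be identically zero; parts $(ii)$ and the solution estimate are derived exactly as you do, by shifting $v$ by a constant and swapping roles. The only cosmetic difference is that the paper works from the outset with the $\varepsilon$-shifted test function $\eta_\varepsilon:=(u-v-\varepsilon)^+$, which is compactly supported in $\Om$ and hence manifestly in $W^{1,p}_{X,0}(\Om)$, whereas you use $(u-v)^+$ directly and recover the trace-zero membership by the same $\varepsilon$-truncation in your ``Main obstacle'' paragraph; either way the argument is the standard Lindqvist one.
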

\begin{comment}
    \begin{proof}
	Let us prove $(i)$. Fix $\varepsilon>0$ and set $D_\varepsilon:=\{x\in\Om\,:\,u(x)>v(x)+\varepsilon\}$. Since $u$ and $v$ are continuous, then $D_\varepsilon$ is open. Assume by contradiction that $D_\varepsilon\neq\emptyset$. Then let us define $\eta_\varepsilon:=\max\{u-v-\varepsilon,0\}$. Since by assumption $u\leq v$ on $\partial\Om$ it holds that $\eta_\varepsilon\in W^{1,p}_{X,0}(\Om)$. Therefore, from our assumtpions on $u$ and $v$, and thanks to Simon's inequality (cf. \cite{simon}), it follows that
	\begin{equation*}
		\int_{D_\varepsilon}|Xu-Xv|^pdx\leq\int_{D_\varepsilon}\left(|Xu^{p-2}Xu-|Xv|^{p-2}Xv|\right)\left(Xu-Xv\right)dx\leq 0,
	\end{equation*}
	which implies that $u-v-\epsilon$ is constant and positive on every connected component of $D_\varepsilon$. A contradiction then follows. For proving $(ii)$ it suffices to notice that $v+\alpha$ is still a weak supersolution for any $\alpha\in\Ru$. Then, noticing that $u\leq v+\sup_{\partial\Om}(u-v)$ on $\partial\Om$, and thanks to $(i)$, $(ii)$ follows. Finally, the last statement follows exchanging the roles of $u$ and $v$ in $(ii)$.
\end{proof}
\end{comment}

In the next result we study the relationships between weak and viscosity solutions to \eqref{pp}. %
It is easy to see that when evaluated on $C^2_X(\Om)$ functions, equation \eqref{pp} becomes
\begin{equation*}\label{ppv}
	-|Xw|^{p-2}\Delta_Xw-(p-2)|Xw|^{p-4}\Delta_{X,\infty}w=f.
\end{equation*}
The associated differential operator, that is 
\begin{equation*}
    F(x,\xi,X)=-|\xi|^{p-2}\left(\text{trace}(X)+\sum_{j=1}^m\sum_{i=1}^n\xi_j\frac{\partial c_{j,i}}{\partial x_i}\right)-(p-2)|\xi|^{p-4}\xi\cdot X\cdot \xi^T-f(x),
\end{equation*}
is horizontally elliptic and continuous, provided that $p\geq 4$ and $f$ is continuous.
 Therefore we require in addition that $p\geq 4$ and that $f\in L^{p'}(\Om)\cap C(\Om)$. %
 The proof of the following result is inspired by \cite{MO}.
\begin{prop}\label{weakvisc}
	Let $p\geq 4$, $f\in L^{p'}(\Om)\cap C(\Om)$ and let $u\in\anso{\Om}\cap C(\Om)$ be a weak solution to \eqref{pp}. Then $u$ is a viscosity solution to \eqref{pp}. 
\end{prop}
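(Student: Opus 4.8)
The plan is to prove the two inclusions separately, namely that a weak solution $u$ is a viscosity supersolution and that it is a viscosity subsolution, the two arguments being symmetric so I will concentrate on the subsolution case. Suppose, for the sake of contradiction, that $u$ fails to be a viscosity subsolution at some point $x_0\in\Om$. Then there exists $\varphi\in C^2_X(\Om)$ touching $u$ from above at $x_0$ (i.e. $u(x_0)=\varphi(x_0)$ and $u(x)<\varphi(x)$ for $x\neq x_0$ in a neighborhood of $x_0$, after the standard reduction to a strict maximum recalled in the Remark after Definition \ref{secondcont}) such that
\[
F(x_0,X\varphi(x_0),X^2\varphi(x_0))=-|X\varphi(x_0)|^{p-2}\Delta_X\varphi(x_0)-(p-2)|X\varphi(x_0)|^{p-4}\Delta_{X,\infty}\varphi(x_0)-f(x_0)>0.
\]
By continuity of all the data involved — here I use that $\varphi\in C^2_X$, that the coefficients $c_{j,i}$ are smooth, and that $f\in C(\Om)$ — the strict inequality persists on a small ball $B=B_r(x_0,d_\Om)\Subset\Om$: there is $\theta>0$ with $-\diverx(|X\varphi|^{p-2}X\varphi)-f\ge\theta$ pointwise on $B$, where on $C^2_X$ functions the divergence form operator coincides with the expanded second-order expression above. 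In other words $\varphi$ is a classical, hence (by Proposition \ref{fermat} and the standard integration by parts for $C^2_X$ test functions against $C^\infty_c$ functions, using Proposition \ref{c2c1} to approximate) a weak, strict supersolution of \eqref{pp} on $B$.

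The next step is the comparison/contact argument that produces the contradiction. Since $u(x_0)=\varphi(x_0)$ and $u<\varphi$ on $\partial B$, let $m:=\min_{\partial B}(\varphi-u)>0$ and consider $\tilde\varphi:=\varphi-m/2$. Then $\tilde\varphi$ is still a weak strict supersolution on $B$, $\tilde\varphi>u$ on $\partial B$, but $\tilde\varphi(x_0)=u(x_0)-m/2<u(x_0)$, so the open set $D:=\{x\in B:u(x)>\tilde\varphi(x)\}$ is a nonempty open subset of $B$ with $u=\tilde\varphi$ on $\partial D$ (the part of $\partial D$ inside $B$) and $u<\tilde\varphi$ near $\partial B$, so $\overline D\subset B$. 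Now I test: using $(u-\tilde\varphi)^+\in W^{1,p}_{X,0}(D)$ extended by zero, the weak subsolution inequality for $u$ and the weak (strict) supersolution inequality for $\tilde\varphi$ give
\[
\int_D\langle |Xu|^{p-2}Xu-|X\tilde\varphi|^{p-2}X\tilde\varphi,\,Xu-X\tilde\varphi\rangle\,dx\le -\theta\int_D(u-\tilde\varphi)^+\,dx<0,
\]
which contradicts the monotonicity of the map $\xi\mapsto|\xi|^{p-2}\xi$ on $\Rm$ (the integrand on the left is pointwise $\ge 0$, by the elementary inequality $\langle |a|^{p-2}a-|b|^{p-2}b,a-b\rangle\ge 0$). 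Hence no such $\varphi$ exists and $u$ is a viscosity subsolution.

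The supersolution case is proved symmetrically, using a test function $\psi\in C^2_X$ touching $u$ from below at $x_0$, the reverse strict inequality, and the set where $u<\tilde\psi$, with the roles of weak sub/supersolution exchanged. I expect the main obstacle to be bookkeeping rather than conceptual: one must be careful that the divergence-form operator $\diverx(|X\varphi|^{p-2}X\varphi)$ is genuinely well-defined and continuous for $\varphi\in C^2_X$ — this is where $p\ge 4$ and the smoothness of the $c_{j,i}$ are used, since $|X\varphi|^{p-2}X\varphi$ must be a $C^1_X$ vector field so that $\diverx$ of it makes classical sense, and then that this classical value equals the weak action $\int X\varphi\cdot X\phi$ against $\phi\in C^\infty_c$; the approximation Proposition \ref{c2c1} handles passing from the $C^2_X$ computation to the genuine weak formulation. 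One should also take a little care that $\tilde\varphi$ (resp. $\tilde\psi$), being only defined and smooth on $B$, is a legitimate competitor: this is fine because all the testing takes place inside $D\Subset B$, so the test functions $(u-\tilde\varphi)^+$ have compact support in $B$. Everything else — the reduction to strict extrema, the continuity persistence of the strict inequality, the Poincaré-type control — is routine given the machinery already set up in Section \ref{prelims}.
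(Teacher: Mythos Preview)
Your argument is correct and follows essentially the same route as the paper: assume by contradiction that the viscosity inequality fails at a strict contact point, use continuity to make the test function a classical (hence weak) supersolution on a small ball, shift it by a positive constant, and reach a contradiction via comparison. The only difference is that the paper invokes the comparison principle (Lemma~\ref{comparison}) as a black box to conclude $u\le v-m$ on $B_R(x_0)$, whereas you re-prove the needed comparison step inline by testing with $(u-\tilde\varphi)^+$ and appealing directly to the monotonicity of $\xi\mapsto|\xi|^{p-2}\xi$; your use of the strictness margin $\theta>0$ is not actually needed (monotonicity already forces $Xu=X\tilde\varphi$ a.e.\ on $D$, hence $u-\tilde\varphi$ constant on each component of $D$, contradicting $u=\tilde\varphi$ on $\partial D$ with $D\neq\emptyset$), but it does no harm.
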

\begin{proof}
	We only prove that $u$ is a viscosity subsolution, being the other half of the proof completely analogous. We already know that $u\in C(\Om)$. Therefore, arguing by contradiction, we assume that there exists $x_0\in\Om$, $v\in C^2_X(\Om)$ and $R>0$ such that $B_R(x_0)\Subset\Om$,
	\begin{equation}\label{stimasotto}
		0=v(x_0)-u(x_0)<v(x)-u(x)\qquad\text{ on }\overline{B_R(x_0)}
	\end{equation}
	and
	$$-|Xv(x_0)|^{p-2}\Delta_X v(x_0)-(p-2)|Xv(x_0)|^{p-4}\Delta_{X,\infty}v(x_0)>f(x_0).$$
	Hence, thanks to the continuity of the $p$-Poisson operator, the continuity of $f$ and the fact that $v\in C^2_X(\Om)$, up to choosing $R$ small enough we can assume that 
	\begin{equation*}
		-|Xv(x)|^{p-2}\Delta_X v(x)-(p-2)|Xv(x)|^{p-4}\Delta_{X,\infty}v(x)\geq f(x)
	\end{equation*}
	for any $x\in B_R(x_0)$. Therefore $v$ is a classical supersolution to the $p$-Poisson equation on $B_R(x_0)$, and so it is in particular a weak supersolution. Since $u\in C(\overline{B_R(x_0)})$ it is well defined the number $m:=\min_{\partial B_R(x_0)}(v-u)$ and by \eqref{stimasotto} we get $m>0$. Now we notice that $v-m$ is still a weak supersolution to the $p$-Poisson equation and $u\leq v-m$ on $\partial B_R(x_0)$. 
	Therefore, thanks to Lemma \ref{comparison}, we conclude that $u\leq v-m$ on $B_R(x_0)$. Recalling that $v(x_0)=u(x_0)$ we get $m\leq 0$ which is a contradiction. Hence $u$ is a viscosity subsolution, and the proof is complete.
\end{proof}

\section{Variational Solutions to the $\infty$-Laplace Equation}
In this section we study the limiting behavior of solutions to \eqref{homogeneousproblem} and we prove Theorem \ref{maint1}. %
\subsection{Existence and Properties of Variational Solutions}
Our approach follows the scheme employed in \cite{BDM}.
We fix a function $g\in\winfx{\Om}$ and $p\in (4,\infty)$.
Let us denote by $u_p$ the unique weak solution to \eqref{pp}, coming from Proposition \ref{esun}, with boundary datum $g$ and $f=0$.
Since $u_p-g$ is an admissible test function in \eqref{weaksolp}, it follows from H\"older's inequality that
\begin{equation*}
\begin{split}
    \int_\Om |Xu_p|^pdx&\leq\int_\Om |Xu_p|^{p-1}|Xg|dx\\
    &\leq\left(\int_\Om|Xu_p|^p \right)^{\frac{p-1}{p}}\left(\int_\Om|Xg|^p\right)^{\frac{1}{p}},
\end{split}
\end{equation*}
which implies that 
\begin{equation}\label{bound}
    \int_\Om|Xu_p|^pdx\leq\int_\Om|Xg|^pdx.
\end{equation}
Let us fix a non-decreasing sequence $(m_k)_k\subseteq (4,\infty)$ with $\lim_{k\to\infty}m_k=\infty$.
We are going to show that the family $(Xu_p)_{p>m_0}$ is bounded in $L^{m_0}(\Om)$. Indeed, if $p>m_0$ then using  \eqref{bound}, H\"older's inequality and the fact that $g\in\winfx{\Om}$, we get
\begin{equation}\label{bounddue}
\int_\Om|Xu_p|^{m_0}dx\leq\|Xu_p\|_{p}^{m_{0}}|\Om|^{\frac{p-m_{0}}{p}}\\
        \leq\left(\|Xg\|_{\infty}^p|\Om|\right)^{\frac{m_0}{p}}|\Om|^{\frac{p-m_0}{p}}\\
        =|\Om|\|Xg\|_{\infty}^{m_0}.
\end{equation}

Thanks to Corollary \ref{quasipoincare} and \eqref{bounddue}, we can conclude that the family $(u_p)_{p>m_0}$ is bounded in $W^{1,m_0}_{X}(\Om)$. Therefore, by reflexivity, we know that there exists a subsequence $(u_{p_h})_h$ and a function $u_\infty\in W^{1,m_0}_{X}(\Om)$ such that
\begin{equation*}
    u_{p_h}\rightharpoonup u_\infty\quad\text{in }\quad W^{1,m_0}_{X}(\Om)\quad \mbox{as}\ h\to \infty.
\end{equation*}
We call $u_\infty$ a \emph{variational solution} to the $\infty$-Laplace equation. Next, we prove points (1)-(4) in Theorem \ref{maint1}.

\begin{proof}[Proof of (1)-(4) in Theorem \ref{maint1}]
The proof of the weak convergence in $W^{1,m}_X(\Om)$ for any $m\in(1,\infty)$ follows repeating the same steps employed for finding $u_\infty$ for each $k\in\mathbb{N}$ and by a standard diagonal argument. The uniform convergence follows by the previous fact and thanks to Proposition
\ref{embeddings}. 
Let us prove $(1)$. From the lower semicontinuity of the $L^{m_k}$-norm with respect to the weak convergence, and the analogous of \eqref{bounddue} with $m_k$ in place of $m_0$ we get
\begin{equation*}
    \|Xu_\infty\|_{m_k}\leq|\Om|^{\frac
    {1}{m_k}}\|Xg\|_{\infty}
\end{equation*}
for any $k\in\mathbb{N}$. Therefore, passing to the limit as $k$ goes to infinity, we conclude that 
\begin{equation*}
    \|Xu_\infty\|_{\infty}\leq\|Xg\|_{\infty}.
\end{equation*}
This, together with Corollary \ref{quasipoincare} and Proposition \ref{embeddings}, allows to conclude that $u_\infty\in W^{1,\infty}_X(\Om)\cap C(\Om)$.
To prove $(2)$ we show that $u_\infty\in W^{1,m_k}_{X,g}(\Om)$ for any $k\in\mathbb{N}$. Indeed, fix $k\in\mathbb{N}$. For any $h$ with $p_h>m_k$, there exists a sequence $(\varphi_j^h)_j\subseteq C^\infty_c(\Om)$ converging to $u_{p_h}-g$ strongly in $W^{1,p_h}_{X}(\Om)$, and so, since $p_h>m_k$, strongly in $W^{1,m_k}_{X}(\Om)$. Therefore we can find a sequence $(\varphi_h)\subseteq (\varphi_j^h)_j^h$ such that 
\begin{equation}\label{stimadiag}
    \|\varphi_h-(u_{p_h}-g)\|_{1,m_k}<\frac{1}{h}
\end{equation}
for any $h>0$. We claim that $(\varphi_h)_h$ converges weakly to $u_\infty-g$ in $W^{1,m_k}_{X}(\Om)$. Indeed, for any $\psi\in L^{m_k^*}(\Om)$, thanks to \eqref{stimadiag} and H\"older's inequality it follows that
\begin{comment}
    \begin{equation*}
    \begin{split}
        \lim_{h\to\infty}\int_\Om\varphi_h\psi dx&=\lim_{h\to\infty}\int_\Om(\varphi_h-(u_{p_h}-g))\psi dx+\lim_{h\to\infty}\int_\Om(u_{p_h}-u_\infty)\psi dx+\int_\Om(u_\infty-g)\psi dx\\
        &\leq\lim_{h\to\infty}\|\varphi_h-(u_{p_h}-g)\|_{m_k}\|\psi\|_{m_k^*}+\int_\Om(u_\infty-g)\psi dx\\
        &\leq\lim_{h\to\infty}\frac{1}{h}\|\psi\|_{m_k^*}+\int_\Om(u_\infty-g)\psi dx=\int_\Om(u_\infty-g)\psi dx.
    \end{split}
\end{equation*}
\end{comment}
\begin{equation*}
    \begin{split}
        \left |\int_\Om\varphi_h\psi dx-\int_\Om(u_\infty-g)\psi dx\right |&\leq\int_\Om|\varphi_h-(u_{p_h}-g)||\psi| dx+\left |\int_\Om(u_{p_h}-u_\infty)\psi dx\right |\\
        &\leq\|\varphi_h-(u_{p_h}-g)\|_{m_k}\|\psi\|_{m_k^*}+\left |\int_\Om(u_{p_h}-u_\infty)\psi dx\right |\\
        &\leq\frac{1}{h}\|\psi\|_{m_k^*}+\left |\int_\Om(u_{p_h}-u_\infty)\psi dx\right |.
    \end{split}
\end{equation*}
The conclusion follows letting $h\to\infty$. Reasoning in a similar way for the $X$-gradients, thanks to Proposition \ref{riesz}, the claim is proved. Therefore, thanks to Mazur's Lemma (cf. e.g. \cite[Corollary 3.9]{brezis}), for each $j\in\mathbb{N}$ there are convex combinations of $\varphi_h$ converging strongly to $u_\infty-g$ in $W^{1,m_k}_{X}(\Om)$, that is, for any $j\in\mathbb N$ there exist natural numbers $M_j<N_j$ and real numbers $a_{j,M_j},\ldots,a_{j,N_j}$, with $\lim_{j\to\infty}M_j=+\infty$, $0\leq a_{j,h}\leq 1$ and $\sum_{h=M_j}^{N_j}a_{j,h}=1$, such that
\begin{equation*}   \phi_j:=\sum_{h=M_j}^{N_j}a_{j,h}\varphi_h\longrightarrow u_\infty-g \qquad \text{in }W^{1,m_k}_{X}(\Om).
\end{equation*}
 Since each $\phi_j$ belongs to $C^\infty_c(\Om)$, it follows that $u_\infty-g\in W^{1,m_k}_{X,0}(\Om)$.
The proof of $(3)$ follows from $(2)$ and thanks to Proposition \ref{embeddings}. Finally, $(4)$ follows trivially from $(3)$.
\end{proof}

The remaining part of this section is dedicated to the proof of the last two statements in Theorem \ref{maint1}.

\subsection{Variational Solutions are AMLEs}
In this section we show that variational solutions, as one might expect, are absolutely minimizing Lipschtz extensions. We point out that this result has already been proved, in greater generality, in \cite{JS}. Nevertheless we prefer to give here a short proof to keep the paper as self-contained as possible.
\begin{prop}\label{amle1}
$u_\infty$ is an AMLE.
\end{prop}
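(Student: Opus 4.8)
The plan is to show that $u_\infty$ is an absolute minimizer for the supremal functional $F(v,V)=\|Xv\|_{L^\infty(V)}$, which by the definition recalled in the preliminaries is the same as being an AMLE. So I fix an open set $V\Subset\Om$ and a competitor $v\in\winfx{V}$ with $v|_{\partial V}=u_\infty|_{\partial V}$ (equivalently $u_\infty-v\in W^{1,\infty}_{X,0}(V)$), and I must prove $\|Xu_\infty\|_{L^\infty(V)}\le\|Xv\|_{L^\infty(V)}$. First I would reduce to the case $\|Xv\|_{L^\infty(V)}<\infty$ (otherwise there is nothing to prove), and set $L:=\|Xv\|_{L^\infty(V)}$.

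The key idea, following \cite{BDM}, is to compare the $p$-harmonic functions $u_p$ with $v$ on $V$ using the minimality of $u_p$ for the $p$-energy. Concretely, for $p=p_h$ large, the function $w_p$ that equals $v$ on $V$ and equals $u_p$ on $\Om\setminus V$ is an admissible competitor for the $p$-energy minimization problem defining $u_p$ with boundary datum $g$ (it lies in $W^{1,p}_{X,g}(\Om)$, since $u_p-w_p\in W^{1,p}_{X,0}(V)\subseteq W^{1,p}_{X,0}(\Om)$, after noting $u_\infty-v\in W^{1,\infty}_{X,0}(V)$ implies $u_p-v = (u_p-u_\infty)+(u_\infty-v)$ and using point (2) of Theorem \ref{maint1}). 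Minimality of $u_p$ for $I_p$ with $f=0$ then gives
\begin{equation*}
\int_V|Xu_p|^p\,dx \le \int_V|Xw_p|^p\,dx = \int_V|Xv|^p\,dx \le L^p|V|.
\end{equation*}
Hence $\|Xu_p\|_{L^p(V)}\le L|V|^{1/p}$. Now I fix any exponent $m$ with $1<m<\infty$; for $p_h>m$, Hölder's inequality gives $\|Xu_{p_h}\|_{L^m(V)}\le\|Xu_{p_h}\|_{L^{p_h}(V)}|V|^{1/m-1/p_h}\le L|V|^{1/m}$. Since $Xu_{p_h}\rightharpoonup Xu_\infty$ weakly in $L^m(V)$ (from the weak $W^{1,m}_X$-convergence established in the proof of (1)--(4)), lower semicontinuity of the norm yields $\|Xu_\infty\|_{L^m(V)}\le L|V|^{1/m}$. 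Letting $m\to\infty$ gives $\|Xu_\infty\|_{L^\infty(V)}\le L=\|Xv\|_{L^\infty(V)}$, which is exactly the absolute minimizing property.

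The main obstacle is the bookkeeping around the trace/boundary condition: I need to verify carefully that $w_p$ genuinely lies in $W^{1,p}_{X,g}(\Om)$, which requires knowing that $u_p-v$ has zero trace on $V$ in the $W^{1,p}_X$ sense. This uses point (2) of Theorem \ref{maint1} (so that $u_\infty-g\in W^{1,p}_{X,0}$ for all finite $p$, hence $u_p-u_\infty = (u_p-g)-(u_\infty-g)\in W^{1,p}_{X,0}(\Om)$ for $p=p_h$) together with the hypothesis $u_\infty-v\in W^{1,\infty}_{X,0}(V)\subseteq W^{1,p}_{X,0}(V)$, and one should be slightly careful gluing these zero-trace statements across $\partial V$ inside $\Om$; this is where the approximation by $C^\infty_c$ functions and the definition of $W^{1,p}_{X,0}$ are invoked. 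The rest — the energy comparison, Hölder, weak lower semicontinuity, and the limit $m\to\infty$ — is routine given the machinery already assembled.
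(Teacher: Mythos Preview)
Your argument has a genuine gap precisely at the point you flagged as ``the main obstacle'': the competitor $w_p$ (equal to $v$ on $V$ and to $u_p$ on $\Om\setminus V$) is in general \emph{not} in $W^{1,p}_{X,g}(\Om)$, because $u_p$ and $v$ do not agree on $\partial V$. Only $u_\infty$ and $v$ agree there. Your justification writes $u_p-v=(u_p-u_\infty)+(u_\infty-v)$ and invokes point (2) of Theorem~\ref{maint1}, but that gives $u_p-u_\infty\in W^{1,p}_{X,0}(\Om)$, i.e.\ vanishing trace on $\partial\Om$, which says nothing about the trace on the interior boundary $\partial V$. Consequently the glued function $w_p$ need not even lie in $W^{1,p}_X(\Om)$, and the energy comparison $\int_V|Xu_p|^p\le\int_V|Xv|^p$ is unproven.

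The paper repairs this by introducing an auxiliary function with the correct boundary values on $\partial V$: for each $p_h$, let $v_{p_h}$ be the weak solution of $-\diverx(|Xw|^{p_h-2}Xw)=0$ in $V$ with boundary datum $v$. Then $v_{p_h}\in W^{1,p_h}_{X,v}(V)$ by construction, so minimality gives the clean bound $\int_V|Xv_{p_h}|^{p_h}\le\int_V|Xv|^{p_h}$, and your H\"older/weak-lower-semicontinuity passage applies to the sequence $(v_{p_h})_h$ to give $\|Xv_\infty\|_{L^\infty(V)}\le\|Xv\|_{L^\infty(V)}$ for the variational limit $v_\infty$. The remaining step is to identify $v_\infty$ with $u_\infty$ on $V$: since both $u_{p_h}$ and $v_{p_h}$ are $p_h$-harmonic on $V$ and continuous on $\overline V$, the comparison principle (Lemma~\ref{comparison}) yields
\[
\|u_{p_h}-v_{p_h}\|_{\infty,V}\le\|u_{p_h}-v_{p_h}\|_{\infty,\partial V}=\|u_{p_h}-u_\infty\|_{\infty,\partial V}\to 0,
\]
using $v_{p_h}|_{\partial V}=v|_{\partial V}=u_\infty|_{\partial V}$ and the local uniform convergence $u_{p_h}\to u_\infty$. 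Hence $v_\infty=u_\infty$ on $V$, and the desired inequality follows.
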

\begin{proof}
Let $v\in\winfx{\Om}$ and $V\Subset\Om$ with $v|_{\partial V}=u_\infty|_{\partial V}$. Let $(m_k)_k$ and $(p_h)_h$ as above. For any $h\in\mathbb{N}$, consider the unique weak solution $v_{p}$ to the problem 
\begin{equation}\label{pdiraux}
    \left\{
\begin{aligned}
&-\diverx(|Xu|^{p_h-2}Xu)=0 \quad && \mbox{in } V\\
&u=v && \mbox{on } \partial V
\end{aligned}
\right.
\end{equation}
Up to a subsequence, we can assume that $(v_{p_h})_h$ converges to a variational solution $v_\infty$ in the sense of Theorem \ref{maint1}. We claim that $v_\infty=u_\infty$ on $V$. First of all notice that, for $h$ big enough and thanks to Proposition \ref{embeddings}, being $v\in C(\overline V)$, it holds that $u_{p_h},v_{p_h}\in C(\overline V)$. Moreover, observe that both $u_{p_h}$ and $v_{p_h}$ satisfies the equation
\begin{equation*}
    \int_V |Xu|^{p-2}Xu\cdot X\varphi dx=0
\end{equation*}
for any $\varphi \in W^{1,p}_{X,0}(V)$. Therefore, thanks to Lemma \ref{comparison} and Theorem \ref{maint1}, it follows that
\begin{equation*}
    \|u_{p_h}-v_{p_h}\|_{\infty, V}\leq\|u_{p_h}-v_{p_h}\|_{\infty, \partial V}\leq\|u_{p_h}-u_\infty\|_{\infty, \partial V}\to 0
\end{equation*}
as $h$ goes to infinity.
Therefore, again thanks to Theorem \ref{maint1}, we conclude that $u_\infty=v_\infty$. On the other hand, arguing as in the proof of Theorem \ref{maint1} and thanks to the previous claim, we conclude that
\begin{equation*}
    \|Xu_\infty\|_{\infty, V}=\|Xv_\infty\|_{\infty, V}\leq\|Xv\|_{\infty,V}.
\end{equation*}
The previous equation yields at once that
\begin{equation*}
    \||Xu_\infty|^2\|_{\infty, V}\leq\||Xv|^2\|_{\infty,V},
\end{equation*}
and the thesis follows.
\end{proof}

\subsection{Variational Solutions are $\infty$-Harmonic}
To complete the study of variational solutions, we conclude by showing that they are viscosity solutions to the $\infty$-Laplace equations. We point out that we cannot exploit Proposition \ref{amle1}, together with the results in \cite{wang,wy,PVW}, to conclude that $u_\infty$, being an AMLE, is $\infty$-harmonic. Indeed, as mentioned before, our notion of viscosity solution is stronger than the one introduced in the aforementioned papers. Therefore we need to give a direct proof which exploits again the approximation scheme employed for obtaining $u_\infty$.
\begin{prop}\label{infa}
	$u_\infty$ is a viscosity solution to the $\infty$-Laplace equation
	\begin{equation}\label{inflapl}
		-\Delta_{X,\infty}u_\infty=0\qquad \text{on }\Om.
	\end{equation}
\end{prop}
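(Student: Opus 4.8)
The plan is to mimic the classical Bhattacharya–DiBenedetto–Manfredi argument, passing to the limit in the viscosity formulation of the $p$-Poisson equation with $f=0$. The starting point is Proposition~\ref{weakvisc}: for each $p>4$ the weak solution $u_p$ of \eqref{homogeneousproblem} is a viscosity solution of $-\diverx(|Xu_p|^{p-2}Xu_p)=0$, which on $C^2_X$ test functions reads
\begin{equation*}
-|X\varphi|^{p-2}\Delta_X\varphi-(p-2)|X\varphi|^{p-4}\Delta_{X,\infty}\varphi=0.
\end{equation*}
I will argue that $u_\infty$ is a viscosity supersolution of $-\Delta_{X,\infty}u_\infty=0$, i.e. $\Delta_{X,\infty}\varphi(x_0)\le 0$ whenever $\varphi\in C^2_X(\Om)$ touches $u_\infty$ from below at $x_0$; the subsolution case is symmetric, exchanging the roles of maxima and minima. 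By the remark after Definition~\ref{secondcont} we may assume $x_0$ is a strict local minimum of $u_\infty-\varphi$.

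The key step is a standard perturbation-and-localization argument. Since $u_{p_h}\to u_\infty$ locally uniformly, for $h$ large there are points $x_h\to x_0$ at which $u_{p_h}-\varphi$ attains a local minimum on a fixed small ball $\overline{B_R(x_0)}\Subset\Om$ (this uses that $x_0$ is a \emph{strict} minimum of $u_\infty-\varphi$, so the minima of $u_{p_h}-\varphi$ cannot escape to the boundary of the ball). Because $u_{p_h}$ is a viscosity supersolution, testing with $\varphi$ at $x_h$ gives
\begin{equation*}
-|X\varphi(x_h)|^{p_h-2}\Delta_X\varphi(x_h)-(p_h-2)|X\varphi(x_h)|^{p_h-4}\Delta_{X,\infty}\varphi(x_h)\ge 0.
\end{equation*}
Now I distinguish two cases. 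If $X\varphi(x_0)=0$, then $\Delta_{X,\infty}\varphi(x_0)=X\varphi(x_0)\cdot X^2\varphi(x_0)\cdot X\varphi(x_0)^T=0\le 0$ trivially. If $X\varphi(x_0)\neq 0$, then $X\varphi(x_h)\neq 0$ for $h$ large; dividing the displayed inequality by $(p_h-2)|X\varphi(x_h)|^{p_h-4}$ yields
\begin{equation*}
\Delta_{X,\infty}\varphi(x_h)\le -\frac{|X\varphi(x_h)|^2}{p_h-2}\,\Delta_X\varphi(x_h).
\end{equation*}
Letting $h\to\infty$, using $x_h\to x_0$, the continuity of $X\varphi,X^2\varphi,\Delta_X\varphi$ on $C^2_X$, and $p_h\to\infty$, the right-hand side vanishes and we obtain $\Delta_{X,\infty}\varphi(x_0)\le 0$, as desired. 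The subsolution statement follows by the same computation with the reversed inequality, testing the viscosity subsolution property at local maxima.

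The main obstacle is purely technical: ensuring the existence of the auxiliary minimum points $x_h$ with $x_h\to x_0$ and that they lie in the interior of the chosen ball. This is handled exactly as in the Euclidean case: replace $\varphi$ by $\tilde\varphi(x)=\varphi(x)+|x-x_0|^4$ (a $C^2$, hence $C^2_X$, perturbation with the same horizontal gradient and Hessian at $x_0$ by Proposition~\ref{fermat}-type computations, since the added term and its first and second horizontal derivatives vanish at $x_0$) to make the touching strict, then use uniform convergence to show $\min_{\overline{B_R(x_0)}}(u_{p_h}-\tilde\varphi)$ is attained at an interior point converging to $x_0$. No linear independence hypothesis \eqref{lic} is needed here, since we work directly with the second-order viscosity formulation and never invoke the $X$-differential. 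This completes the proof of (5) in Theorem~\ref{maint1}; combined with Proposition~\ref{amle1} for (6), all statements of the theorem are established.
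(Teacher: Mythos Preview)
Your proposal is correct and follows essentially the same approach as the paper: both arguments use Proposition~\ref{weakvisc} to view each $u_{p_h}$ as a viscosity solution of the $p_h$-Laplace equation, locate auxiliary extremum points $x_h\to x_0$ via uniform convergence and strictness of the touching, divide the viscosity inequality by $(p_h-2)|X\varphi(x_h)|^{p_h-4}$, and pass to the limit (with the case $X\varphi(x_0)=0$ being trivial). The paper presents the subsolution half and you present the supersolution half, but the computations are symmetric; your discussion of the $|x-x_0|^4$ perturbation is a slight redundancy since you already invoked the remark allowing strict extrema, but it does no harm.
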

\begin{proof}	
	We only show that $u_\infty$ is a viscosity subsolution to \eqref{inflapl}, being the other half of the proof analogous. To this aim, let $x_0\in\Om$, $v\in C_X^2(\Om)$ and $R>0$ be such that $u_\infty-v$ has a strict maximum at $x_0$ in $B_R(x_0)\Subset\Om$. If $Xv(x_0)=0$, {\color{purple} by \eqref{eq:infXLap}} the thesis is trivial. So we can assume that $|Xv(x_0)|>0$.
Let $u_h:=u_{p_h}$ be a sequence which allows to define $u_\infty$. We can assume without loss of generality that $p_h>Q$ for any $h\in\mathbb{N}$, where $Q$ is as in Proposition \ref{embeddings}. Then it follows that $u_h\in C^0(\Om)$. Moreover, thanks to Theorem \ref{maint1} we can assume that $u_h$ converges to $u_\infty$ uniformly on $B_R(x_0)$. Let now $x_h$ be a maximum point of $u_h-v$ on $\overline{B_{\frac{R}{2}}(x_0)}$. We claim that $x_h$ has a subsequence, still denoted by $x_h$, which converges to $x_0$. If it is not the case, assume without loss of generality that $x_h\to x_1\neq x_0$, for some $x_1\in B_R(x_0)$. Then it follows that
 $$u_h(x_h)-v(x_h)\geq u_h(x_0)-v(x_0),$$
 and so, passing to the limit and thanks to uniform convergence, we get that
 $$u_\infty(x_1)-v(x_1)\geq u_\infty(x_0)-v(x_0),$$
 which contradicts the strict maximality of $x_0$. Hence, up to a subsequence, we assume that $x_h\to x_0$. 
 By Proposition \ref{weakvisc} we know that $u_h$ is a viscosity solution to \eqref{pp}, therefore
	\begin{equation*}
		-|Xv(x_h)|^{p_h-2}\Delta_Xv(x_h)-(p_h-2)|Xv(x_h)|^{p_h-4}\Delta_{X,\infty}v(x_h)\leq 0.
	\end{equation*}
	Since $|Xv(x_0)|>0$, then for $h$ big enough we have that $|Xv(x_h)|>0$. Therefore we can divide both sides by $(p_h-2)|Xv(x_h)|^{p_h-4}$, and get that 
	\begin{equation*}
		-\frac{|Xv(x_h)|^{2}\Delta_X v(x_h)}{p_h-2}-\Delta_{X,\infty}v(x_h)\leq 0.
	\end{equation*}
	Passing to the limit as $h\to\infty$, the proof is complete.
\end{proof}

\section{Variational Solutions Arising from the Non-Homogeneous Problem}
\begin{comment}
	Let $g: \Om \to \Rn$ be a non-negative function such that $g \in L^{\infty}(\Om)$ , and we consider the $p-$Laplace equation given by 
	\begin{equation}\label{pdirnh}
		\left\{
		\begin{aligned}
			&-\diverx(|Xu|^{p-2}Xu)=g \quad && \mbox{in } \Omega\\
			&u=0 && \mbox{on } \partial\Omega.
		\end{aligned}
		\right.
	\end{equation}
	and we say that $u\in W^{1,p}_{X,0}(\Omega)$ is a \emph{weak solution} of \eqref{pdir} if $u\in W^{1,p}_{X,0}(\Om)$ and it holds that
	\begin{equation}\label{weaksolnh}
		\int_\Om|Xu|^{p-2}Xu\cdot X\varphi dx=\int_{\Om} g \varphi dx
	\end{equation}
	for each $\varphi$ in $W^{1,p}_{X,0}(\Omega)$.
\end{comment}

In this section we prove Theorem \ref{maint2} and study %
the limiting behavior of weak solutions to the $p$-Poisson equation as $p\to\infty$ with a non-negative datum $f\in L^\infty(\Om)\cap C^0(\Om)$. In analogy with the previous section we introduce the notion of \emph{variational solutions} $u_\infty$ as suitable limits of the sequence $(u_p)_p$. Moreover, we show that $u_\infty$ is the solution of a constrained extremal problem which can be understood as the limiting problem arising from \eqref{epp}. Finally, we study the limiting partial differential equation satisfied by $u_\infty$. In particular we show that $u_\infty$ is a viscosity supersolution to the $\infty$-Laplace equation and a viscosity subsolution to the Eikonal equation. Unlike the homogeneous case, $u_\infty$ is not in general $\infty$-harmonic. Nevertheless, it satisfies in the viscosity sense the system \eqref{limitproblem}.

\subsection{Existence and Properties of Variational Solutions}
We follow the approach of \cite{BDM}. From now on we fix $f\in L^\infty(\Om)$ and we denote by $u_p \in W^{1,p}_{X,0}(\Omega)$ the unique solution to \eqref{pp} with $f\ge0$ and $p>4$.
Let us denote by $I_{\infty}$ the variational functional that we get taking the (formal) limit as $p\to +\infty$ in \eqref{eq:Ipfunctional}, namely
\[
I_{\infty}(\varphi):= -\int_\Om f \varphi dx 
\]
with $\varphi \in W_X^{1, \infty}(\Om) \cap C_0(\overline{\Om})$. Clearly, $I_{\infty}$ does not admit a minimum in $W_X^{1, \infty}(\Om) \cap C_0(\overline{\Om})$.
Nevertheless, in analogy with the Euclidean setting, we are going to show that imposing the extra condition $\| X \varphi\|_{\infty, \Om}=1$ is enough to find a solution.
\begin{thm}\label{tm:exuinf}
There exists $u_{\infty} \in W_X^{1, \infty}(\Om)\cap C_0(\overline{\Om})$ such that
\begin{equation}
\label{eq:maxprobinft}
I_{\infty}(u_{\infty})\leq I_{\infty}(\varphi)
\end{equation}
for any $\varphi \in W_X^{1, \infty}(\Om)\cap C_0(\overline{\Om})$ such that $ \| X \varphi\|_{\infty, \Om}=1.$
Moreover, it holds that 
\begin{equation}
\label{eq:upperbounduinf}
0 \le u_{\infty}(x) \le d_{\Om_0} (x, \partial \Om) \quad \forall x \in \overline{\Om},
\end{equation}
where $d_{\Om_0}(x, \partial\Om)= \inf_{y \in \partial\Om} d_{\Om_0} (x,y)$.

\end{thm}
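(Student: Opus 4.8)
The plan is to obtain $u_\infty$ as a cluster point of the net $(u_p)_{p>4}$ of weak solutions to \eqref{pp} with datum $f\ge0$, in analogy with the homogeneous case, and then to pass to the limit in the variational inequalities $I_p(u_p)\le I_p(\varphi)$. The first ingredient is a uniform a priori bound. Testing \eqref{weaksolp} with $\varphi=u_p$ gives $\int_\Om|Xu_p|^p\,dx=\int_\Om fu_p\,dx$, and, fixing once and for all $q_0\in(1,\infty)$, H\"older's inequality, Theorem \ref{poinc} at the exponent $q_0$, and H\"older's inequality again yield, for $p>q_0$,
\begin{equation*}
\int_\Om fu_p\,dx\le\|f\|_{L^\infty(\Om)}\,|\Om|^{1-\frac1{q_0}}\,\|u_p\|_{L^{q_0}(\Om)}\le\|f\|_{L^\infty(\Om)}\,c(\Om,q_0)^{\frac1{q_0}}\,|\Om|^{1-\frac1p}\,\|Xu_p\|_{L^p(\Om)},
\end{equation*}
so that $\|Xu_p\|_{L^p(\Om)}^{p-1}\le C_0\,|\Om|^{1-1/p}$ with $C_0:=\|f\|_{L^\infty(\Om)}c(\Om,q_0)^{1/q_0}$. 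Since $C_0^{1/(p-1)}\to1$ and $|\Om|^{(1-1/p)/(p-1)}=|\Om|^{1/p}\to1$ as $p\to\infty$, this gives $\limsup_{p\to\infty}\|Xu_p\|_{L^p(\Om)}\le1$, and in particular $\|Xu_p\|_{L^p(\Om)}\le C_1$ for all large $p$.

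For the compactness, fix $m>Q$ (with $Q$ as in Proposition \ref{embeddings}); H\"older's inequality and the previous bound give a uniform bound on $\|Xu_p\|_{L^m(\Om)}$ for large $p$, so by Theorem \ref{poinc} the family $(u_p)$ is bounded in $W^{1,m}_{X,0}(\Om)$. By reflexivity, a diagonal argument along a sequence $m_k\nearrow\infty$, and the compact embedding in Proposition \ref{embeddings}(ii), some subsequence $(u_{p_k})$ converges to a function $u_\infty$ weakly in $W^{1,m}_X(\Om)$ for every $m>1$ and locally uniformly on $\Om$. Since $W^{1,m}_{X,0}(\Om)$ is weakly closed, $u_\infty\in W^{1,m}_{X,0}(\Om)$ for every $m$, hence $u_\infty\in C_0(\overline\Om)$ by Proposition \ref{embeddings}(iii). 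Moreover, weak lower semicontinuity of the $L^m$-norm, the inequality $\|Xu_{p_k}\|_{L^m(\Om)}\le|\Om|^{1/m-1/p_k}\|Xu_{p_k}\|_{L^{p_k}(\Om)}$, and the first step give $\|Xu_\infty\|_{L^m(\Om)}\le|\Om|^{1/m}$ for every $m$, whence $\|Xu_\infty\|_{L^\infty(\Om)}\le1$ letting $m\to\infty$; thus $u_\infty\in W^{1,\infty}_X(\Om)\cap C_0(\overline\Om)$.

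For the minimality, let $\varphi\in W^{1,\infty}_X(\Om)\cap C_0(\overline\Om)$ with $\|X\varphi\|_{\infty,\Om}=1$. Its truncations $\operatorname{sign}(\varphi)(|\varphi|-\eps)_+$ are compactly supported in $\Om$ and converge to $\varphi$ in $W^{1,p}_X(\Om)$ as $\eps\to0^+$, so by a Friedrichs-type mollification (cf. \cite{GN}) $\varphi\in W^{1,p}_{X,0}(\Om)$ for every $p$; hence $I_p(u_p)\le I_p(\varphi)$ by Proposition \ref{esun}, which reads
\begin{equation*}
\frac1p\int_\Om|Xu_p|^p\,dx-\int_\Om fu_p\,dx\le\frac1p\int_\Om|X\varphi|^p\,dx-\int_\Om f\varphi\,dx\le\frac{|\Om|}{p}-\int_\Om f\varphi\,dx.
\end{equation*}
Dropping the non-negative first term on the left and letting $p=p_k\to\infty$ — using that $u_{p_k}\to u_\infty$ locally uniformly on $\Om$ while $\sup_k\|u_{p_k}\|_{L^m(\Om)}<\infty$ for $m>1$, so that $\int_\Om fu_{p_k}\,dx\to\int_\Om fu_\infty\,dx$ — yields $\int_\Om f\varphi\,dx\le\int_\Om fu_\infty\,dx$, which is precisely \eqref{eq:maxprobinft}. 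For the lower bound in \eqref{eq:upperbounduinf}, note that since $f\ge0$ the constant $0$ is a weak subsolution of \eqref{pp}, while $u_p$ is a weak solution with the same vanishing boundary values and both lie in $C^0(\overline\Om)$, so Lemma \ref{comparison}$(i)$ gives $u_p\ge0$ on $\Om$; passing to the limit, $u_\infty\ge0$.

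For the upper bound, fix $x\in\Om$, $y_0\in\partial\Om\subseteq\Om_0$ and $\eps>0$, and choose a sub-unit curve $\gamma\colon[0,T]\to\Om_0$ with $\gamma(0)=x$, $\gamma(T)=y_0$ and $T<d_{\Om_0}(x,y_0)+\eps$. Setting $t^\ast:=\inf\{t\ge0:\gamma(t)\notin\Om\}$, one has $0<t^\ast\le T$, $\gamma([0,t^\ast))\subseteq\Om$ and $\gamma(t^\ast)\in\partial\Om$. Since $u_\infty\in W^{1,\infty}_{X,loc}(\Om)$, Proposition \ref{w2} applied to $\gamma$ on any compact subinterval of $[0,t^\ast)$ shows that $t\mapsto u_\infty(\gamma(t))$ is Lipschitz there with derivative bounded in modulus by $\|Xu_\infty\|_{L^\infty(\Om)}\le1$ — indeed the relevant $(X,N)$-subgradients of $u_\infty$ have norm at most $\|Xu_\infty\|_{L^\infty(\Om)}$ and $\gamma$ is sub-unit — so $|u_\infty(x)-u_\infty(\gamma(s))|\le s$ for $s<t^\ast$; letting $s\uparrow t^\ast$ and using $u_\infty\in C_0(\overline\Om)$ (so that $u_\infty(\gamma(s))\to u_\infty(\gamma(t^\ast))=0$) we obtain $u_\infty(x)\le t^\ast\le T<d_{\Om_0}(x,y_0)+\eps$. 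Taking the infimum over $y_0\in\partial\Om$ and letting $\eps\to0^+$ gives $u_\infty(x)\le d_{\Om_0}(x,\partial\Om)$, the case $x\in\partial\Om$ being trivial. I expect the main obstacles to be the uniform estimate $\limsup_p\|Xu_p\|_{L^p}\le1$ of the first step and the upper bound just sketched: the latter genuinely needs the larger domain $\Om_0$, because a $d_{\Om_0}$-almost geodesic from a point of $\Om$ to $\partial\Om$ will in general leave $\Om$, so one cannot argue with $d_\Om$ alone and must instead exploit the continuity of $u_\infty$ up to the boundary.
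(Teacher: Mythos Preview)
Your proof is correct and the overall strategy—construct $u_\infty$ as a cluster point of $(u_p)$, pass to the limit in the variational inequalities, and read off the pointwise bounds—is the same as the paper's. However, several of your substeps follow a genuinely different route.

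For the a~priori estimate $\limsup_p\|Xu_p\|_{L^p}\le1$ you argue directly via Poincar\'e at a fixed exponent $q_0$ and H\"older. The paper instead establishes a \emph{Thompson principle} $E_p=\max_{\varphi}\bigl(\int f\varphi/\|X\varphi\|_p\bigr)^{p/(p-1)}$ and an equivalent \emph{Dirichlet principle}, and uses the latter to show that $p\mapsto(|\Omega|^{-1}E_p)^{(p-1)/p}$ is monotone, hence convergent; the bound on $\|Xu_p\|_p$ then follows. Your argument is shorter and more elementary; the paper's detour, on the other hand, yields the extra variational characterisations of $E_p$ and the existence of $E_\infty=\lim_pE_p$, which they use later. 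For the minimality \eqref{eq:maxprobinft} you pass to the limit directly in $I_p(u_p)\le I_p(\varphi)$, dropping the non-negative gradient term on the left. The paper instead combines $E_\infty=\int fu_\infty$ with the inequality $\int f\varphi/\|X\varphi\|_\infty\le E_\infty$ coming from the Thompson principle; as a by-product they obtain $\|Xu_\infty\|_\infty=1$, which you do not prove (and the theorem does not require). For the lower bound $u_\infty\ge0$ you invoke the comparison principle with the subsolution $0$, a point the paper leaves implicit (it only proves $|u_\infty(x)|\le d_{\Omega_0}(x,\partial\Omega)$). For the upper bound you run a sub-unit curve from $x$ to $\partial\Omega$ inside $\Omega_0$, stop at the first exit from $\Omega$, and use Proposition~\ref{w2} to bound the variation of $u_\infty\circ\gamma$; the paper obtains the same conclusion by extending $u_\infty$ by zero to $\Omega_0$, observing the extension lies in $W^{1,\infty}_X(\Omega_0)\subseteq\Lip_{loc}(\Omega_0,d_{\Omega_0})$, and then using the $1$-Lipschitz bound directly. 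The two arguments are essentially the same computation, phrased respectively at the level of curves and of the metric.

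Two small points to tighten: your claim that $\varphi\in W^{1,p}_{X,0}(\Omega)$ for every $\varphi\in W^{1,\infty}_X(\Omega)\cap C_0(\overline\Omega)$ is correct (the paper proves the special case of $d_{\Omega_0}(\cdot,\partial\Omega)$ in Lemma~\ref{lm:dtestfun} and cites \cite[Theorem~9.17]{brezis}), and your justification of $\int fu_{p_k}\to\int fu_\infty$ can be streamlined by simply noting that $u_{p_k}\rightharpoonup u_\infty$ weakly in $L^m(\Omega)$ with $f\in L^{m'}(\Omega)$.
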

Before proving the theorem we construct the candidate solutions $u_\infty$, in analogy with the previous section, as suitable limits of subsequences of $(u_p)_p$.
To this aim, let us define the real number $E_p$ by
\begin{equation*}
    E_p=E_p(\Omega,f):=\int_\Om|X u_p|^pdx.
\end{equation*}
By \eqref{weaksolp} and the H\"older inequality we have 
\[
\left| \int_\Om f \, \varphi \, dx \right| \le E_p^{\frac{p-1}{p}} \left(\int_\Om |X \varphi|^p\right)^{\frac{1}{p}}
\]
for each $\varphi \in W^{1,p}_{X,0}(\Omega)$.
Therefore it holds that
\begin{equation}
\label{eq:lowerEp}
\max_{\varphi \in W^{1,p}_{X,0}(\Omega), \varphi \ne 0} \left( \dfrac{\int_\Om f \, \varphi \, dx}{\left(\int_\Om |X \varphi|^p\right)^{1/p}} \right)^{\frac{p}{p-1}} \le E_p,
\end{equation}
where by possibly changing $\varphi$ into $-\varphi$ we have assumed that
\[
\int_\Om f \, \varphi\, dx \ge0.
\]
Testing \eqref{weaksolp} with $\varphi=u_p$ we get 
\begin{equation}
\label{eq:Epwrtg}
E_p=\int_\Om|X u_p|^pdx=\int_\Om f \, u_p\,  dx.
\end{equation}
From this we have 
\begin{equation}
\label{eq:upperEp}
E_p=\dfrac{\left(\int_\Om |X u_p|^p\right)^{\frac{p}{p-1}}}{\left(\int_\Om |X u_p|^p\right)^{\frac{1}{p-1}}}=\left(\dfrac{\int_\Om f \, u_p  }{\left(\int_\Om |X u_p|^p\right)^{1/p}} \right)^{\frac{p}{p-1}} \le \max_{\varphi \in W^{1,p}_{X,0}(\Omega), \varphi \ne 0} \left( \dfrac{\int_\Om f \, \varphi dx}{\left(\int_\Om |X \varphi|^p\right)^{1/p}} \right)^{\frac{p}{p-1}} 
\end{equation}
which together with \eqref{eq:lowerEp} gives
\[
E_p=\max_{\varphi \in W^{1,p}_{X,0}(\Omega), \varphi \ne 0} \left( \dfrac{\int_\Om f \, \varphi dx}{\left(\int_\Om |X \varphi|^p\right)^{1/p}} \right)^{\frac{p}{p-1}},
\]
that is the anisotropic analogous of the so-called \emph{Thompson principle} (cf. \cite{BDM}).
\begin{comment}
	\begin{rem}
		When $\Om \subset \Om'$ and $g: \Om' \to \mathbb{R}^+$ then $E_p(\Omega,g)\le E_p(\Om',g)$ since $u_p \in W^{1,p}_{X,0}(\Omega)$ extends to be zero outside of $\Om$.
	\end{rem}
\end{comment}
Using equation \eqref{eq:Epwrtg} we have 
\[
E_p=\int_\Om  \langle V, X u_p\rangle\, dx,
\]
where $V \in L^{\frac{p}{p-1}}(\Om,\mathbb{R}^m)$ is any vector valued function satisfying $-\diverx (V)=f$. By the H\"older inequality
\[
E_p \le  \int_{\Om} |V|^{\frac{p}{p-1}}
\]
with equality if $V=|X u_p |^{p-2} X u_p$. Therefore the Thompson principle is equivalent to the \emph{Dirichlet principle} given by 
\begin{equation}
\label{eq:EpDP}
E_p= \min \left\{ \int_\Om |V|^{\frac{p}{p-1}} dx \ : \ V \in L^{\frac{p}{p-1}}(\Om,\mathbb{R}^m), \quad  -\diverx (V)=f \quad \text{in} \quad \mathcal{D}'(\Om) \right\}.
\end{equation}
\begin{lem}
\label{lm:Epdec}
The function $p \to (|\Om|^{-1} E_p)^{\frac{p-1}{p}}$ is monotonically decreasing as $p \to + \infty$.
\end{lem}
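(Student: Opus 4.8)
The plan is to prove that $\phi(p) := \left(|\Om|^{-1} E_p\right)^{\frac{p-1}{p}}$ is monotonically decreasing by combining the Dirichlet principle \eqref{eq:EpDP} with a Jensen/Hölder-type inequality on the finite measure space $(\Om, \mathcal{L}^n)$. The key observation is that the exponent $q := \frac{p}{p-1}$ is a decreasing function of $p$, so that for $4 < p_1 < p_2$ one has $1 < q_2 < q_1$, and on a finite measure space the normalized $L^q$ norms are monotone in $q$.

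First I would fix $4 < p_1 < p_2$ and set $q_i = \frac{p_i}{p_i-1}$, so $q_1 > q_2 > 1$. Let $V$ be any admissible competitor in the Dirichlet principle \eqref{eq:EpDP} for the exponent $p_2$, i.e. $V \in L^{q_2}(\Om, \Rm)$ with $-\diverx(V) = f$ in $\mathcal{D}'(\Om)$; in particular one may take $V = |Xu_{p_2}|^{p_2 - 2} Xu_{p_2}$, which realizes $E_{p_2} = \int_\Om |V|^{q_2}\,dx$. Since $q_1 > q_2$, Hölder's inequality on the finite measure space gives
\begin{equation*}
\left( \frac{1}{|\Om|} \int_\Om |V|^{q_2}\, dx \right)^{\frac{1}{q_2}} \le \left( \frac{1}{|\Om|} \int_\Om |V|^{q_1}\, dx \right)^{\frac{1}{q_1}},
\end{equation*}
provided $V \in L^{q_1}(\Om,\Rm)$. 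The point is then that this same $V$ is also an admissible competitor for the exponent $p_1$ (the divergence constraint $-\diverx(V) = f$ is identical and independent of the exponent), so by the minimality in \eqref{eq:EpDP} for $p_1$ we get $E_{p_1} \le \int_\Om |V|^{q_1}\, dx$. Chaining these inequalities yields $\left(|\Om|^{-1} E_{p_2}\right)^{1/q_2} \le \left(|\Om|^{-1} \int_\Om |V|^{q_1}\right)^{1/q_1}$, and combined with $E_{p_1} \le \int_\Om |V|^{q_1}$ and $1/q_i = \frac{p_i - 1}{p_i}$ this gives exactly $\phi(p_2) \le \phi(p_1)$.

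The main obstacle is the integrability gap: the minimizer $V = |Xu_{p_2}|^{p_2-2}Xu_{p_2}$ lies in $L^{q_2}$ by construction but is not a priori in $L^{q_1}$ with $q_1 > q_2$, so one cannot directly insert it as a competitor for $p_1$. I would circumvent this in one of two ways. The cleanest is to observe via \eqref{bound}-type reasoning (or directly from $Xu_{p_2} \in L^{p_2}$, which follows since $u_{p_2}$ is a weak solution and $u_{p_2} \in W^{1,p_2}_{X,0}(\Om)$) that $|Xu_{p_2}|^{p_2-1} \in L^{p_2/(p_2-1)} = L^{q_2}$, and more is available: if needed one argues by an exhaustion/truncation of $V$ and passes to the limit, using that the constraint $-\diverx(V)=f$ is preserved under suitable approximation, or alternatively one applies the monotone-$L^q$-norm inequality first at the level of $Xu_{p_2}$ itself (which lies in $L^{p_2} \subseteq L^{p_1}$) rather than the nonlinear vector field $V$. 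In fact the slick route avoids $V$ altogether: testing \eqref{weaksolp} for $p_1$ against $u_{p_1}$ and estimating via Hölder against $u_{p_2}$ — combined with \eqref{eq:Epwrtg} and the Thompson principle — reproduces the same inequality. I expect the argument to reduce, after choosing the right representation of $E_p$, to a one-line application of the monotonicity of $t \mapsto \left(\frac{1}{|\Om|}\int_\Om |g|^t\right)^{1/t}$ on a probability space, and the bookkeeping of exponents $q_i = \frac{p_i}{p_i-1}$ will be the only genuinely delicate point.
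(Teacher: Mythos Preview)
Your overall strategy---combine the Dirichlet/Thompson characterization of $E_p$ with the monotonicity of normalized $L^q$ norms on the bounded set $\Om$---is exactly the right one, and your ``slick route'' via the Thompson principle (plugging $u_{p_2}\in W^{1,p_1}_{X,0}$ into \eqref{eq:lowerEp} for $p_1$, using $\int f u_{p_2}=E_{p_2}$ and H\"older on $\|Xu_{p_2}\|_{p_1}\le |\Om|^{1/p_1-1/p_2}E_{p_2}^{1/p_2}$) does produce the inequality cleanly.

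However, your primary route through the Dirichlet principle runs the argument in the wrong direction, and the patch you propose (truncation/exhaustion of $V$) is not convincing: truncating $V$ destroys the distributional constraint $-\diverx(V)=f$, so such approximants are no longer admissible competitors. The paper avoids this entirely by reversing the roles. Instead of fixing the minimizer for the larger exponent $p_2$ and trying to push it into the smaller (more demanding) class $L^{q_1}$, one takes an \emph{arbitrary} competitor $V\in L^{q_1}(\Om,\Rm)$ for the smaller exponent $p_1$; since $q_1>q_2$ and $|\Om|<\infty$, such $V$ automatically lies in $L^{q_2}$ and is admissible for $p_2$. Then the Dirichlet principle for $p_2$ gives $E_{p_2}\le\int|V|^{q_2}$, the normalized-norm monotonicity gives $(|\Om|^{-1}\int|V|^{q_2})^{1/q_2}\le(|\Om|^{-1}\int|V|^{q_1})^{1/q_1}$, and taking the infimum over all such $V$ (which includes the minimizer for $p_1$) yields $\phi(p_2)\le\phi(p_1)$ directly. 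This is the paper's three-line argument; no integrability gap ever arises.
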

\begin{proof}
Let $1<q<p$. For all $V $ in $L^{\frac{q}{q-1}}(\Om,\mathbb{R}^m)$ such that $-\diverx (V)=f $ in $ \mathcal{D}'(\Om) $ we have
\[
(|\Om|^{-1} E_p)^{\frac{p-1}{p}} \le \left(  |\Om|^{-1}  \int_\Om |V|^{\frac{p}{p-1}} dx \right)^{\frac{p-1}{p}} \le  \left(  |\Om|^{-1}  \int_\Om |V|^{\frac{q}{q-1}} dx\right)^{\frac{q-1}{q}}.
\]
Then we have 
\[
(|\Om|^{-1} E_p)^{\frac{p-1}{p}}  \le  \inf_{V  \in L^{q/(q-1)}(\Om,\mathbb{R}^m), \diverx (V)=-f} \left(  |\Om|^{-1}  \int_\Om |V|^{\frac{q}{q-1}} dx\right)^{\frac{q-1}{q}} \le (|\Om|^{-1} E_q)^{\frac{q-1}{q}},
\]
where the last inequality follows by \eqref{eq:EpDP}.
\end{proof}
By Lemma \ref{lm:Epdec} we get that $\{E_p\}_{p}$ converges and we set $E_{\infty}= \lim_{p \to + \infty} E_p$. Fix $m>1$, by the H\"older inequality we have
\begin{equation}
\label{eq:upperboundgradientLm}
\int_{\Om} |X u_p|^{m} \le \left( \int_\Om |X u_p|^p \right)^{\frac{m}{p}} |\Om|^{1-\frac{m}{p}}= E_p^{\frac{m}{p}}  |\Om|^{1-\frac{m}{p}}\quad \mbox{for all}\ p>m.
\end{equation}
Let us fix a non-decreasing sequence $(m_k)_k\subseteq (4,+\infty)$ with $\lim_{k\to\infty}m_k=+\infty$.
 By \eqref{eq:upperboundgradientLm} and $E_{\infty}= \lim_{p \to + \infty} E_p$, the family $(u_p)_{p>m_k}$ is bounded in $W^{1,m_k}_{X,0}(\Om)$ for each $k \in \mathbb{N}$.  
  Therefore, by reflexivity, there exists a subsequence $(u_{p_h})_h$ and a function $u_\infty\in W^{1,m_k}_{X,0}(\Om)$ such that
\begin{equation*}
    u_{p_h}\rightharpoonup u_\infty\qquad\text{in }W^{1,m_k}_{X,0}(\Om)
\end{equation*}
as $h$ goes to infinity for each $k \in \mathbb{N}$. %
 We call $u_\infty$ a \emph{variational solution}. It is now possible to repeat the same arguments of the previous section to see that $u_{p_h}\rightharpoonup u_\infty$ in $\anso{\Om}$ for any $p>4$. Moreover by \eqref{eq:upperboundgradientLm} we conclude
\begin{equation}
\label{eq:lipcost}
\| X u_{\infty}\|_{\infty} \le \lim_{p \to +\infty} \left( \frac{E_p}{|\Om|} \right)^{\frac{1}{p}}=1.
\end{equation}
Therefore $u_{\infty}\in W_{X }^{1,\infty} (\Om)$.  Moreover, by Proposition \ref{embeddings} we know that $u_{\infty}\in W_{X }^{1,\infty} (\Om)\cap C_0(\overline\Om)$. Finally, again by Proposition \ref{embeddings} we conclude that $u_{p_h}\to u_\infty$ uniformly on $\overline \Om$.
\begin{proof}[Proof of Theorem \ref{tm:exuinf}]
Let us consider a variational solution $u_\infty$, relative to sequences $(m_k)_k$ and $(p_h)_h$. For sake of simplicity, we denote $p_h$ by $p$ and we write $p\to\infty$ meaning that $h\to\infty$. We already know that $u_{\infty}\in W_{X }^{1,\infty} (\Om)\cap C_0(\overline\Om)$. 
{Therefore, if we extend $u_\infty$ to be zero outside $\Om$, then clearly $u_{\infty}\in W_{X }^{1,\infty} (\Om_0)$. Hence (cf. \cite{GN}) it follows that $u_\infty\in \text{Lip}_{loc}(\Om_0,d_{\Om_0})$. Since $\Om\Subset \Om_0$, we conclude that $u_\infty\in \text{Lip}(\overline\Om,d_{\Om_0})$.}
By \eqref{eq:lipcost} we get
\[
|u_{\infty}(x)- u_{\infty}(y)| \le d_{\Om_0} (x,y)
\]
for each $x,y \in \overline{\Om}$. Taking the infimum for $y \in \partial \Om$ and recalling that $u_{\infty}(y)=0$, we obtain 
\[
|u_{\infty}(x)| \le d_{\Om_0}(x, \partial \Om).
\]

On one hand, by \eqref{eq:lowerEp} it follows that for $\varphi \in W_X^{1, \infty}(\Om)\cap C_0(\overline{\Om}) $, $\varphi\neq 0$ fixed we have 
\[
 \frac{\int_\Om f \, \varphi\, dx}{\left(\int_\Om |X \varphi|^p\,dx\right)^{1/p}} \le E_p^{\frac{p-1}{p}} 
\]
and letting $p \to +\infty$
\begin{equation}
\label{eq:lowerEinf}
 \frac{\int_\Om f \, \varphi dx}{\| X \varphi \|_{\infty}} \le E_{\infty}.
\end{equation}
 On the other hand, recalling \eqref{eq:Epwrtg} and by the weak convergence, we have 
\begin{equation}
\label{eq:Einfequ}
E_{\infty}= \int_\Om f u_{\infty}\, dx.
\end{equation}
Combining \eqref{eq:lipcost}, \eqref{eq:lowerEinf} and \eqref{eq:Einfequ} we get that $\| X u_{\infty}\|_{\infty} =1$ and that 
\begin{equation*}
     \int_\Om f u_{\infty}\, dx\geq \int_\Om f \varphi\,dx
\end{equation*}
for any $\varphi \in W_X^{1, \infty}(\Om)\cap C_0(\overline{\Om})$ such that $\| X \varphi\|_{\infty} =1$. This concludes the proof.
\begin{comment}
    \begin{equation}
\label{eq:upperEinf}
E_{\infty}\le \dfrac{ \int_\Om f u_{\infty}}{\| X u_{\infty}\|_{\infty}} \le \sup_{ \varphi \in W_X^{1, \infty}(\Om)\cap C_0(\bar{\Om}), \| X \varphi\|_{\infty, \Om}\le1} \dfrac{ \int_\Om f \varphi}{\| X \varphi \|_{\infty}}
\end{equation}

Then \eqref{eq:lowerEinf} and \eqref{eq:upperEinf} implies 
\[
\inf_{ \varphi \in W_X^{1, \infty}(\Om)\cap C_0(\bar{\Om}), \| X \varphi\|_{\infty, \Om}\le1} \left( \|X \varphi \|_{\infty} - \frac{1}{E_{\infty}} \int_{\Om} f \varphi  \right)=0.
\]
In particular when $\varphi=u_{\infty}$ we have 
\begin{equation}
\label{eq:Einf>=0}
\|X u_{\infty} \|_{\infty} - \frac{1}{E_{\infty}} \int_{\Om} f u_{\infty}   \ge 0.
\end{equation}
Putting together \eqref{eq:Einf>=0} and \eqref{eq:Einfequ} we gain $\| X u_{\infty} \|_{\infty} \ge1$. Thus $\| X u_{\infty} \|_{\infty}  = 1$ and 
\[
E_{\infty}\le \dfrac{ \int_\Om f u_{\infty}}{\| X u_{\infty}\|_{\infty}} \le \sup_{ \varphi \in W_X^{1, \infty}(\Om)\cap C_0(\bar{\Om}), \| X \varphi\|_{\infty, \Om}=1}  \left( \int_\Om f \varphi \right). \]
This and equation \eqref{eq:lowerEinf} conclude the proof. 
\end{comment}

\end{proof}

\begin{comment}
    \begin{lem}\label{converge2}
Let $u_\infty$ be a variational solution to the $\infty$-Laplace equation, relative to sequences $(m_k)_k$ as above. Then $u_\infty\in  C_0(\overline\Om)$.
\end{lem}
\begin{proof}
We know that $u_{\infty} \in W^{1,m_k}_{X,0}(\Om)$. Let $\Omega_0$ be an open set such that $\Om \Subset  \Om_0$. It is easy to see that the function $\tilde u_{\infty}:\Om_0\scu\overline\Ru$ defined as
\begin{equation*}
  \tilde u_{\infty}(x):=
		\begin{cases}\,u_{\infty}(x)&\text{ if }x\in\Om\\
			0&\text{ otherwise}
		\end{cases}
\end{equation*}
belongs to $W^{1,m_k}_X(\Om_0)$, and so, when  $m_k>Q$, by $(i)$ in  Proposition \ref{embeddings}  we have that $ \tilde u_{\infty}$ belongs to $C^{0,1-\frac{Q}{m_k}}_{X,loc}(\Om_0).$
Since $\Om\Subset\Om_0$, we conclude that $u\in C(\overline\Om)$ and $\tilde{u}_{\infty}(x)=0$ for $x \in \partial \Omega$. Hence $u_{\infty}$ belongs to $C_0(\bar{\Om})$.
\end{proof}
\end{comment}

To conclude this section, in analogy with \cite{BDM}, we show that when $f>0$ variationals solutions are unique and coincide with the Carnot-Carathéodory distance from the boundary of $\Om$. Before we need a technical lemma.
\begin{lem}
\label{lm:dtestfun}
The distance function $x \mapsto d_{\Om_0} (x, \partial \Om)$ belongs to $W_{X}^{1,\infty}(\Om) \cap C_0 (\overline\Om)$. In particular, $d_{\Om_0}(\cdot, \partial \Om)$ belongs to $W_{X,0}^{1,p}(\Om) $ for all $p\ge 1$. Moreover, $\|X d_{\Om_0} (\cdot, \partial \Om) \|_{\infty}=1$.
\end{lem}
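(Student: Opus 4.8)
The plan is to show that $v(x) := d_{\Om_0}(x, \partial\Om)$ is Lipschitz with respect to $d_{\Om_0}$ with constant $1$, which by the identification $W^{1,\infty}_{X,loc} = \Lip_{loc}(\cdot, d_{\Om_0})$ (from \cite{GN}) immediately places it in $W^{1,\infty}_X(\Om)$ with $\|Xv\|_\infty \le 1$. First I would verify the $1$-Lipschitz bound: given $x, y \in \overline\Om$, for any $z \in \partial\Om$ one has $v(x) \le d_{\Om_0}(x,z) \le d_{\Om_0}(x,y) + d_{\Om_0}(y,z)$; taking the infimum over $z \in \partial\Om$ yields $v(x) - v(y) \le d_{\Om_0}(x,y)$, and by symmetry $|v(x) - v(y)| \le d_{\Om_0}(x,y)$. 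Since $\Om \Subset \Om_0$ and $X$ satisfies the H\"ormander condition on $\Om_0$, Proposition \ref{horm}(ii) gives that $d_{\Om_0}$ is continuous with respect to the Euclidean topology on $\overline\Om$, so $v$ is continuous on $\overline\Om$; and clearly $v \equiv 0$ on $\partial\Om$, hence $v \in C_0(\overline\Om)$. Combining, $v \in W^{1,\infty}_X(\Om) \cap C_0(\overline\Om)$, and since $\Om$ is bounded this gives $v \in W^{1,p}_X(\Om)$ for every $p \ge 1$; moreover, because $v$ vanishes on $\partial\Om$ and is Lipschitz, a standard truncation/approximation argument (using that the zero extension of $v$ lies in $W^{1,\infty}_X(\Om_0)$, exactly as in the proof of Theorem \ref{tm:exuinf}) shows $v \in W^{1,p}_{X,0}(\Om)$ for all $p \ge 1$.

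The remaining, and only genuinely non-trivial, point is the lower bound $\|Xv\|_\infty \ge 1$, i.e. that the $1$-Lipschitz estimate is sharp. The idea is to exhibit, near a suitable point, directional behavior of $v$ along a sub-unit curve that realizes the distance with asymptotically unit speed. Concretely, fix $x_0 \in \Om$ and let $z_0 \in \partial\Om$ be a point (which exists by compactness of $\partial\Om$ and continuity of $d_{\Om_0}(x_0, \cdot)$) with $d_{\Om_0}(x_0, z_0) = v(x_0)$. For $\eps > 0$ choose a sub-unit curve $\gamma : [0,T] \to \Om_0$ with $\gamma(0) = x_0$, $\gamma(T) = z_0$ and $T < v(x_0) + \eps$. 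Along this curve, for $0 \le s \le t \le T$ one has $d_{\Om_0}(\gamma(s), \gamma(t)) \le t - s$; on the other hand $v(\gamma(s)) \le d_{\Om_0}(\gamma(s), z_0) \le T - s$ and, crucially, $v(\gamma(s)) \ge v(x_0) - d_{\Om_0}(x_0, \gamma(s)) \ge v(x_0) - s$, so $|v(\gamma(s)) - v(\gamma(t))| \ge |(v(x_0) - s) - (T - t)| $ is controlled from below; more cleanly, $v(\gamma(s)) - v(\gamma(t)) \ge (v(x_0) - s) - (v(x_0)+\eps - t) = (t-s) - \eps$ for those $s,t$ where $\gamma(s)$ still lies in $\Om$, while simultaneously $d_{\Om_0}(\gamma(s),\gamma(t)) \le t-s$. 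Letting $\eps \to 0$ shows $v$ is, along such curves, $1$-Lipschitz with the bound attained in the limit, which forces $\|Xv\|_{L^\infty} \ge 1$ after invoking Proposition \ref{w2} to differentiate $s \mapsto v(\gamma(s))$ and the fact that the metric derivative of $v$ along $\gamma$ equals $|Xv(\gamma(s)) \cdot A(s)|$ a.e. Together with the upper bound this yields $\|Xv\|_\infty = 1$.

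I expect the sharpness of the gradient bound to be the main obstacle: one must be careful that the minimizing curve $\gamma$ stays inside $\Om$ at least on an initial segment (true since $x_0$ is interior and $v(x_0) > 0$, so $\gamma(s) \in \Om$ for small $s$ by the Lipschitz estimate $v(\gamma(s)) \ge v(x_0) - s > 0$), and that one can pass from the metric estimate along $\gamma$ to a pointwise statement about $|Xv|$; this is exactly where Proposition \ref{w2} and the a.e.\ differentiability of $v \circ \gamma$ enter, giving $\frac{d}{ds}(v\circ\gamma)(s) = g(s) \cdot A(s)$ with $g(s) \in \sub v(\gamma(s))$ and $|A(s)| \le 1$, so that $|g(s)| \ge |\frac{d}{ds}(v\circ\gamma)(s)|$; since the latter can be made arbitrarily close to $1$ on a positive-measure set as $\eps \to 0$, and $|g(s)| \le \|Xv\|_\infty$, the conclusion follows. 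An alternative, perhaps cleaner, route for the lower bound is simply to observe that if $\|Xv\|_\infty = c < 1$ then $v$ would be $c$-Lipschitz in $d_{\Om_0}$, contradicting $v(x_0) = d_{\Om_0}(x_0, z_0)$ by considering a near-minimizing curve from $x_0$ to $z_0$ and estimating $v(x_0) = v(x_0) - v(z_0) \le c\, d_{\Om_0}(x_0, z_0) = c\, v(x_0) < v(x_0)$, provided $v(x_0) > 0$ — which holds for any interior point. I would present this last argument, as it avoids the subtleties of differentiating along curves.
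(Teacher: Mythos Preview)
Your proposal is correct and follows the same overall route as the paper: both establish the $1$-Lipschitz bound for $v=d_{\Om_0}(\cdot,\partial\Om)$ with respect to $d_{\Om_0}$, invoke the identification from \cite{GN} to land in $W^{1,\infty}_X(\Om)$, observe $v\in C_0(\overline\Om)$ by continuity and vanishing on $\partial\Om$, and then argue membership in $W^{1,p}_{X,0}(\Om)$ by a standard approximation (the paper points to \cite[Theorem~9.17]{brezis}, you point to the zero-extension argument as in Theorem~\ref{tm:exuinf}; these amount to the same thing).

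The only substantive difference is that the paper does not prove $\|Xv\|_\infty=1$ at all --- it simply cites \cite{GN} for this fact --- whereas you supply a self-contained argument for the lower bound. Your final ``cleaner'' contradiction argument is correct, but note one small gap in its presentation: from $\|Xv\|_{L^\infty(\Om)}\le c$ you conclude that $v$ is $c$-Lipschitz with respect to $d_{\Om_0}$, yet a priori the gradient bound on $\Om$ only gives $c$-Lipschitz with respect to $d_\Om$, and near-minimizing curves for $d_{\Om_0}(x_0,z_0)$ may exit $\Om$. The fix is exactly the zero-extension you already used (so $\tilde v\in W^{1,\infty}_{X,\mathrm{loc}}(\Om_0)$ with $\|X\tilde v\|_\infty\le c$, and one integrates along the curve in $\Om_0$ via Proposition~\ref{w2}), or equivalently a first-exit-time argument stopping the curve at $\tau=\inf\{t:\gamma(t)\in\partial\Om\}$ where $v(\gamma(\tau))=0$. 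With that one line added, your argument is complete and in fact more informative than the paper's citation.
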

\begin{proof}
It is well known that $d_{\Om_0}(\cdot,\partial \Om)\in \Lip(\Om,d_{\Om_0})$ and that $\|X d_{\Om_0} (\cdot, \partial \Om) \|_{\infty}=1$ (cf. \cite{GN}). Since $\Lip(\Om,d_{\Om_0})\subseteq \Lip(\Om,d_{\Om})$ and $\Lip(\Om,d_{\Om})\subseteq\winfx{\Om}$ (cf. \cite{GN}), we conclude that 
$d_{\Om_0}(\cdot,\partial \Om) \in \winfx{\Om}$.
\begin{comment}
    First of all by the triangle inequality we have that $d_{\Om_0}: \overline{\Om} \times \overline{\Om} \to \rr $ is a $1$-Lipschitz function. Then we claim that also
$d_{\Om_0}(\cdot,\partial \Om)$ is $1$-Lipschitz. Indeed, for any $\eps >0$ there exists $\bar{y} \in \partial \Om$ such that 
\[
d_X(x,\bar{y})< \inf_{y \in \partial \Om } d_X( x,y) + \eps
\]
Therefore, we have
\[
\left | \inf_{y \in \partial \Om } d_X( x,y) - \inf_{y \in \partial \Om } d_X( z,y) \right | \le |d_X( x,\bar{y}) - d_X( z,\bar{y}) + \eps |\le d_X( x,z) + \eps.
\]
Letting $\eps \to 0$ we get the claim.
 Since   $\Lip_{d_{\Om_0}}(\Om)\subseteq\winfx{\Om}$ (cf. \cite{GN}), we obtain that $d_{\Om_0}(x,\partial \Om) \in \winfx{\Om} $, thus in particular $d_{\Om_0}(x,\partial \Om) \in W_{X}^{1,p}(\Om) $.
\end{comment}
 Moreover, $d_{\Om_0}(\cdot,\partial \Om)$ is continuous and $d_{\Om_0}(x,\partial \Om)=0$ for $x \in \partial \Om$, thus $d_{\Om_0}(x,\partial \Om) \in C_0(\overline{\Om})$. Finally, in order to prove that  $d(x, \partial \Om)$ belongs to $W_{X,0}^{1,p}(\Om) $ we argue as in \cite[Theorem 9.17]{brezis}.
\end{proof}

\begin{prop}
 Assume that $f>0$ in $\Om$. Then there exists a unique variational solution $u_{\infty}$. Moreover, every sequence $(u_{p_i})_i\subseteq (u_p)_p$ converges to $u_\infty$ strongly in $W_{X}^{1,m}(\Om)$ for any $m\geq 1$. Finally, it holds that
\[
u_{\infty}(x)=d_{\Om_0}(x,\partial \Om), \qquad \forall x \in \overline\Om.
\]
\end{prop}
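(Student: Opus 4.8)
The plan is to prove the two inequalities $u_\infty(x) \le d_{\Om_0}(x, \partial\Om)$ and $u_\infty(x) \ge d_{\Om_0}(x, \partial\Om)$ for all $x \in \overline\Om$, and then deduce uniqueness and strong convergence. The upper bound is already established in \eqref{eq:upperbounduinf} in Theorem \ref{tm:exuinf}, valid for every variational solution, so only the lower bound requires work. For the lower bound I would exploit the extremal characterization \eqref{eq:maxprobinft}: since $d_{\Om_0}(\cdot, \partial\Om)$ belongs to $W^{1,\infty}_X(\Om) \cap C_0(\overline\Om)$ with $\|X d_{\Om_0}(\cdot, \partial\Om)\|_\infty = 1$ by Lemma \ref{lm:dtestfun}, it is an admissible competitor in the minimization problem solved by $u_\infty$, whence
\[
-\int_\Om f u_\infty\, dx = I_\infty(u_\infty) \le I_\infty(d_{\Om_0}(\cdot,\partial\Om)) = -\int_\Om f\, d_{\Om_0}(x,\partial\Om)\, dx,
\]
that is, $\int_\Om f\, u_\infty\, dx \ge \int_\Om f\, d_{\Om_0}(x,\partial\Om)\, dx$. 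Combined with the pointwise upper bound $u_\infty \le d_{\Om_0}(\cdot,\partial\Om)$ and the strict positivity $f > 0$ in $\Om$, this forces $u_\infty = d_{\Om_0}(\cdot, \partial\Om)$ almost everywhere, and then everywhere on $\overline\Om$ by continuity of both functions.

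Since the limiting function is thereby identified independently of the chosen subsequences $(m_k)_k$ and $(p_h)_h$, uniqueness of the variational solution follows immediately: any two variational solutions coincide with $d_{\Om_0}(\cdot, \partial\Om)$, hence with each other. For the strong convergence statement, I would argue that the whole family $(u_p)_p$ converges (not merely a subsequence): if some sequence $(u_{p_i})_i$ failed to converge weakly in $W^{1,m}_X(\Om)$ to $u_\infty$, one could extract a further subsequence which, by the compactness argument already used to construct variational solutions (reflexivity together with the uniform bound \eqref{eq:upperboundgradientLm}), converges weakly to \emph{some} variational solution, which must again be $d_{\Om_0}(\cdot, \partial\Om) = u_\infty$, a contradiction. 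To upgrade weak convergence in $W^{1,m}_X(\Om)$ to strong convergence I would compare norms: from $u_{p_h} \rightharpoonup u_\infty$ weakly in $W^{1,m}_X(\Om)$ one has $\|X u_\infty\|_m \le \liminf_h \|X u_{p_h}\|_m$ by weak lower semicontinuity, while \eqref{eq:upperboundgradientLm} together with $E_p \to E_\infty$ and $\|X u_\infty\|_\infty = 1$ (so $\|X u_\infty\|_m \ge $ the relevant lower bound, using $E_\infty = |\Om|$ which follows from $\int_\Om f u_\infty = E_\infty$ and $u_\infty = d_{\Om_0}(\cdot,\partial\Om)$ being the extremal) yields $\limsup_h \|X u_{p_h}\|_m \le \|X u_\infty\|_m$; hence $\|X u_{p_h}\|_m \to \|X u_\infty\|_m$, and in the uniformly convex space $L^m(\Om,\Rm)$ weak convergence plus norm convergence gives strong convergence of the gradients, while strong convergence of the functions themselves is already guaranteed by the uniform convergence on $\overline\Om$ from Proposition \ref{embeddings}.

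The main obstacle I anticipate is the bookkeeping around $E_\infty$: one needs $E_\infty = |\Om|$ (equivalently, that the extremal value in the limiting Thompson/Dirichlet principle is attained with constant gradient norm $1$) to make the norm-convergence comparison tight, and this should be extracted by chaining \eqref{eq:lowerEinf}, \eqref{eq:Einfequ} and the identification $u_\infty = d_{\Om_0}(\cdot,\partial\Om)$ — testing \eqref{eq:lowerEinf} with $\varphi = d_{\Om_0}(\cdot,\partial\Om)$ gives $E_\infty \ge \int_\Om f\, d_{\Om_0}(x,\partial\Om)\,dx = \int_\Om f u_\infty\, dx = E_\infty$, so all intermediate inequalities are equalities. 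A secondary point requiring a little care is ensuring that the passage from ``almost everywhere'' to ``everywhere'' equality is legitimate, but this is immediate once we invoke the continuity of $u_\infty$ (from Proposition \ref{embeddings}) and of $d_{\Om_0}(\cdot,\partial\Om)$ (from Lemma \ref{lm:dtestfun}), together with the fact that $\{f>0\} = \Om$ has full measure.
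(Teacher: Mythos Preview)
Your identification of $u_\infty$ with $d_{\Om_0}(\cdot,\partial\Om)$ is correct and essentially identical to the paper's: test the extremal problem \eqref{eq:maxprobinft} with $d_{\Om_0}(\cdot,\partial\Om)$ via Lemma \ref{lm:dtestfun}, combine with the pointwise upper bound \eqref{eq:upperbounduinf} and strict positivity of $f$. Uniqueness and weak convergence of the full family follow by your subsequence argument, again matching the paper.

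For strong convergence the paper defers to \cite[Part II, Proposition 2.1]{BDM}, which uses Clarkson's inequalities, whereas you invoke the Radon--Riesz property (weak convergence plus norm convergence in a uniformly convex space). Both routes are standard and equally effective, and both hinge on the same ingredient: one needs $\int_\Om |Xu_\infty|^m\,dx = |\Om|$, i.e.\ $|Xd_{\Om_0}(\cdot,\partial\Om)|=1$ almost everywhere. This is where your bookkeeping goes astray. You write ``$E_\infty = |\Om|$'', but $E_\infty = \int_\Om f u_\infty\,dx$ has no reason to equal $|\Om|$ for a general $f$, and the chain of equalities you propose in the final paragraph does not produce $|\Om|$ anywhere. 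What is actually required is the eikonal identity $|Xd_{\Om_0}(\cdot,\partial\Om)|=1$ a.e.\ --- a property of the Carnot--Carath\'eodory distance function itself (see e.g.\ \cite{M}), stronger than the statement $\|Xd_{\Om_0}(\cdot,\partial\Om)\|_\infty=1$ recorded in Lemma \ref{lm:dtestfun}. Once you invoke that fact directly, your norm-comparison argument closes: $\limsup_h\|Xu_{p_h}\|_m^m\le |\Om|=\|Xu_\infty\|_m^m$ from \eqref{eq:upperboundgradientLm}, and Radon--Riesz finishes the job.
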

\begin{proof}
Let $u_{\infty}$ be as in Theorem \ref{tm:exuinf}, relative to sequences $(m_k)_k$ and $(p_h)_h$.  By Lemma \ref{lm:dtestfun}, $d_{\Om_0}(\cdot,\partial \Om)$ is a suitable test function in \eqref{eq:maxprobinft}, and so 
\[
\int_{\Om} f(x) u_{\infty}(x)\, dx \ge \int_{\Om} f(x) d_{\Om_0}(x, \partial \Om)\, dx,
\]
which together with $f>0$ in $\Om$ gives $u_{\infty}(x) \ge d_{\Om_0}(x, \partial \Om)$ for all $x$ in $\Om$. This inequality and  \eqref{eq:upperbounduinf} imply that $u_{\infty} = d_{\Om_0}(\cdot, \partial \Om)$. Fix now a sequence $(u_{p_i})_i\subseteq (u_p)_p$ and $m\geq 1$. Since every subsequence of $(u_{p_i})_i$ has a subsequence that weakly converges to $d_{\Om_0}(\cdot,\Om_0)$ in $W_{X}^{1,m}(\Om)$, then the $(u_{p_i})_i$ weakly converges to $u_{\infty}=d(x, \partial \Om)$ in $W_{X,0}^{1,m_0}(\Om)$. In particular we gain that $(u_{p_i})_i$ converges to $ d_{\Om_0}(\cdot, \partial \Om)$ in $C_X^{0,\alpha}(\overline{\Om})$ for $\alpha=1-Q/m_0$ and $(Xu_{p_i})_i$ converges weakly in $L^{m}$ to $X  d_{\Om_0}(\cdot,\partial \Om)$. The rest of the proof follows exactly as in the proof of \cite[Part II, Proposition 2.1]{BDM}.
\begin{comment}
    Let $p_i,p_j>m$. then by the Clarkson's inequalities we get 
\begin{align*}
\int_{\Om} \dfrac{|X u_{p_i}+X u_{p_j}|^{m}}{2^{m}} + \int_{\Om} \dfrac{|X u_{p_i}-X u_{p_j}|^{m}}{2^{m}} &\le \dfrac{1}{2} \left(\int_{\Om} |X u_{p_i}|^{m}+ \int_{\Om} |X u_{p_j}|^{m} \right)\\
& \le \dfrac{1}{2} \left(E_{p_i}^{\frac{m}{p_i}} |\Om|^{1-\frac{m}{p_i}}+ E_{p_j}^{\frac{m}{p_j}} |\Om|^{1-\frac{m}{p_j}} \right)
\end{align*}
Therefore  we have
\[
\limsup_{i,j \to + \infty} \int_{\Om} \dfrac{|X u_{p_i}+X u_{p_j}|^{m}}{2^{m}} \le |\Om|
\]
On the other hand, from the weak lower semicontinuity of the $L^{m}$ norm we have
\[
|\Om|=\int_{\Om} |X d_{\Om_0}(x,\partial \Om)|^{m} dx \le \liminf_{i,j \to + \infty} \int_{\Om} \dfrac{|X u_{p_i}+X u_{p_j}|^{m}}{2^{m}}.
\]
Hence we conclude that 
\[
\limsup_{i,j \to + \infty} \int_{\Om} \dfrac{|X u_{p_i}- X u_{p_j}|^{m}}{2^{m}}=0.
\]
Therefore $(u_{p_i})_i$ is a Cauchy sequence. Since $W_{X}^{1,m}(\Om)$ is a Banach space we obtain that $(u_{p_i})_i$ strongly converges to $u_{\infty}$ in  $W_{X}^{1,m}(\Om)$.
\end{comment}
\end{proof}

\begin{cor}
    Let $\Om_1$ be a domain such that $\Om\Subset \Om_1\subseteq\Om_0$. Then 
    \begin{equation*}
        d_{\Om_1}(\cdot,\partial\Om)=d_{\Om_0}(\cdot,\partial\Om)\quad\text{on }\overline\Om.
    \end{equation*}
\end{cor}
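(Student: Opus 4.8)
The plan is to deduce the identity from the characterization of the variational solution obtained in the preceding proposition, once one observes that the objects entering its construction depend only on $\Om$ and on the restriction of the frame $X$ to $\Om$, and not on the particular ambient domain chosen as background space. First I would fix, once and for all, a datum $f\in L^\infty(\Om)\cap C^0(\Om)$ with $f>0$ in $\Om$ (for instance $f\equiv 1$), and let $(u_p)_{p>4}$ be the associated net of weak solutions to \eqref{pp}. Both the weak formulation \eqref{weaksolp} and the space $W^{1,p}_{X,0}(\Om)$ refer exclusively to $\Om$ and to $X|_\Om$, so this net is unambiguously defined; consequently the same is true for any of its cluster points in the weak $W^{1,m}_{X}(\Om)$ topologies, that is, for the variational solutions.

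Next I would note that the whole construction of this section goes through verbatim with $\Om_0$ replaced by $\Om_1$. Indeed, since $X$ satisfies \eqref{hormander} on $\Om_0\supseteq\Om_1$, it satisfies \eqref{hormander} on $\Om_1$ as well, so that $(\Om_1,d_{\Om_1})$ is a Carnot-Carathéodory space; moreover $\Om\Subset\Om_1$, hence $\partial\Om\subseteq\overline\Om\subseteq\Om_1$ and the function $d_{\Om_1}(\cdot,\partial\Om)$ is well defined on $\overline\Om$. The bounds \eqref{eq:upperboundgradientLm}, the monotonicity provided by Lemma \ref{lm:Epdec}, Theorem \ref{tm:exuinf} and the constrained extremal problem \eqref{eq:maxprobinft} are all intrinsic to $\Om$, while Lemma \ref{lm:dtestfun} holds with $\Om_1$ in place of $\Om_0$, so the preceding proposition applies in both settings. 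Applying it with ambient domain $\Om_0$ gives that the (unique) variational solution $u_\infty$ satisfies $u_\infty(x)=d_{\Om_0}(x,\partial\Om)$ for every $x\in\overline\Om$; applying it with ambient domain $\Om_1$ gives $u_\infty(x)=d_{\Om_1}(x,\partial\Om)$ for every $x\in\overline\Om$. Since the net $(u_p)_p$, and hence $u_\infty$, is the same in both cases, comparing the two identities yields $d_{\Om_1}(\cdot,\partial\Om)=d_{\Om_0}(\cdot,\partial\Om)$ on $\overline\Om$.

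There is essentially no genuine obstacle here: the one point that deserves care is precisely the remark that the entire variational scheme --- the net $(u_p)_p$, the energies $E_p$, and the limit $u_\infty$ --- is insensitive to the choice of the enlarging domain, which is immediate from the definitions. In other words, the corollary simply records the fact that, as detected by the extremal problem \eqref{eq:maxprobinft}, the Carnot-Carathéodory distance to $\partial\Om$ computed from inside $\overline\Om$ does not change as long as the ambient domain contains $\overline\Om$ compactly and supports the Hörmander condition.
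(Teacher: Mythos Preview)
Your argument is correct and is precisely the approach the paper has in mind: the corollary is stated without proof immediately after the proposition identifying $u_\infty$ with $d_{\Om_0}(\cdot,\partial\Om)$, and the intended deduction is exactly the one you give---namely that the construction of $(u_p)_p$ and of $u_\infty$ takes place entirely inside $\Om$ and is therefore insensitive to whether one works with $\Om_0$ or with any intermediate $\Om_1$ satisfying $\Om\Subset\Om_1\subseteq\Om_0$.
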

\subsection{The Limiting Partial Differential Equation}
In this final section, in analogy with \cite{BDM}, we want to understand which is the limiting partial differential equation that variational solutions have to satisfy. As in the Euclidean setting we show that the limiting equations depend on the fact that we are in the support of $f$ or not. Indeed we show that a variational solution is $\infty$-harmonic outside the support of $f$ and that it satisfies the Eikonal equation inside the support of $f$.
We begin our proof with the following result.
\begin{prop}
	$u_\infty$ is a viscosity supersolution to the Eikonal equation
	\begin{equation*}
	|Xu_\infty|= 1\qquad \text{in }\quad \{f>0\}.
	\end{equation*}
\end{prop}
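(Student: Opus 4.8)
The aim is to show that $u_\infty$ is a viscosity supersolution of $H(x,u,Xu)=0$ with $H(x,s,p)=|p|-1$ on the open set $\{f>0\}$, i.e. that whenever $\varphi\in C^1_X(\Om)$ touches $u_\infty$ from below at a point $x_0\in\{f>0\}$ (so $u_\infty-\varphi$ has a local minimum there), one has $|X\varphi(x_0)|\geq 1$. The strategy mirrors the one used in Proposition \ref{infa} and in \cite{BDM}: transfer the condition to the approximating $p$-Poisson solutions, where the viscosity machinery of Proposition \ref{weakvisc} is available, and exploit the exponential decay in $p$ of the subelliptic $p$- and $\infty$-Laplacian terms.

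First I would perform the usual reductions. Subtracting $|\cdot-x_0|^2$ from $\varphi$ does not alter $X\varphi(x_0)$ and makes the minimum of $u_\infty-\varphi$ strict, say over a ball $\overline{B_R(x_0)}\Subset\{f>0\}$. Since the second-order $p$-Poisson operator is only defined on twice-differentiable functions, I would then invoke the Friedrichs-type approximation in Proposition \ref{c2c1} to obtain $\varphi_j\in C^\infty(\Om)\subseteq C^2_X(\Om)$ with $\varphi_j\to\varphi$ and $X\varphi_j\to X\varphi$ uniformly on $\overline{B_R(x_0)}$. For each $j$ the function $u_\infty-\varphi_j$ attains its minimum over $\overline{B_R(x_0)}$ at some $z_j$, and the strictness of the limiting minimum forces $z_j\to x_0$; hence $X\varphi_j(z_j)\to X\varphi(x_0)$ and $z_j$ is interior for $j$ large. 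A further subtraction of $\delta|\cdot-z_j|^2$ (which again leaves $X\varphi_j(z_j)$ unchanged) makes the minimum at $z_j$ strict. Therefore it suffices to prove: if $\psi\in C^2_X(\Om)$ and $u_\infty-\psi$ has a strict interior minimum at a point $z$ of a ball $\overline{B_r(z)}\Subset\{f>0\}$, then $|X\psi(z)|\geq 1$.

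To prove this, let $(u_{p_h})_h$ be the subsequence defining $u_\infty$; recall $p_h>4$, $u_{p_h}\in C(\Om)$ for $h$ large (Proposition \ref{embeddings}), and $u_{p_h}\to u_\infty$ uniformly on $\overline{B_r(z)}$. Then $u_{p_h}-\psi$ attains its minimum over $\overline{B_r(z)}$ at points $x_h$, and strictness of the limiting minimum gives $x_h\to z$, so $x_h$ is an interior local minimum point of $u_{p_h}-\psi$ for $h$ large. By Proposition \ref{weakvisc}, $u_{p_h}$ is a viscosity solution, in particular a viscosity supersolution, of the $p_h$-Poisson equation with datum $f\in L^\infty(\Om)\cap C(\Om)\subseteq L^{p_h'}(\Om)\cap C(\Om)$, whence
\begin{equation*}
-|X\psi(x_h)|^{p_h-2}\Delta_X\psi(x_h)-(p_h-2)|X\psi(x_h)|^{p_h-4}\Delta_{X,\infty}\psi(x_h)\geq f(x_h).
\end{equation*}
Suppose, for contradiction, that $|X\psi(z)|<1$. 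Since $X\psi$ is continuous and $x_h\to z$, there is $c$ with $0<c<1$ and $|X\psi(x_h)|\leq c$ for $h$ large; as $\Delta_X\psi$ and $\Delta_{X,\infty}\psi$ are bounded near $z$ and both $c^{p_h-2}$ and $(p_h-2)\,c^{p_h-4}$ tend to $0$ as $p_h\to\infty$, the left-hand side above tends to $0$, whereas $f(x_h)\to f(z)>0$ (recall $z\in\{f>0\}$). This forces $0\geq f(z)>0$, a contradiction, so $|X\psi(z)|\geq 1$. Feeding this back, $|X\varphi_j(z_j)|\geq 1$ for all large $j$, and letting $j\to\infty$ yields $|X\varphi(x_0)|\geq 1$, which is the claim.

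The analytic core, namely the exponential decay of the $p$- and $\infty$-Laplacian contributions once $|X\psi|<1$, is immediate, and the remainder is the comparison-of-minima bookkeeping familiar from \cite{BDM}. The one genuinely delicate point is the passage from the $C^1_X$ test functions allowed by Definition \ref{viscodeff} to the $C^2_X$ test functions required to invoke Proposition \ref{weakvisc}: this is handled via the Friedrichs approximation of Proposition \ref{c2c1}, at the cost of carrying the strict-interior-minimum condition through the two successive perturbations/approximations described above.
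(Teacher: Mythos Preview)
Your proof is correct and follows essentially the same approach as the paper: reduce from $C^1_X$ to $C^2_X$ test functions via the Friedrichs-type approximation of Proposition~\ref{c2c1}, pass to minimum points $x_h$ of $u_{p_h}-\psi$, apply the viscosity supersolution property from Proposition~\ref{weakvisc}, and derive a contradiction from the assumption $|X\psi(z)|<1$. The only cosmetic difference is that you bound the left-hand side of the supersolution inequality directly (using $|X\psi(x_h)|\le c<1$ and the boundedness of $\Delta_X\psi$, $\Delta_{X,\infty}\psi$), whereas the paper divides through by $(p_h-2)|Xv(x_h)|^{p_h-4}$ and lets the right-hand side blow down to $-\infty$; both yield the same contradiction.
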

\begin{proof}
	\begin{comment}
		First we show that it suffices to show that
		\begin{equation}\label{open}
			|Xu_\infty|\geq 1\qquad \text{on }\{f>0\}
		\end{equation}
		in the viscosity sense. Indeed, assume that \eqref{open} holds. Let $x_0\in\overline{\{f>0\}}\setminus\{f>0\}$ and take a sequence $(x_h)_h\subseteq\{f>0\}$ converging to $x_0$. Let $v\in C^1_X(\Om)$ be such that $u_\infty-v$ has a local minimum at $x_0$. Choose a sequence of radii $R_h$ such that $R_h\to 0$ as $h\to\infty$ and $B_{R_h}(x_h)\Subset \{f>0\}$. Let $y_h$ be a minimum point of $u_\infty-v$ on $\overline{B_{R_h}(x_h)}$. Then it follows that $y_h\to x_0$. Moreover, thanks to \eqref{open}, we have that $|Xv(y_h)|\geq 1$. Therefore, by continuity of $Xv$, we conclude that $|Xv(x_0)|\geq 1$. 
		Hence we are left to show \eqref{open}.
	\end{comment}
	 We begin by showing that it suffices to consider tests functions in $C^2_X(\Om)$. Indeed, let $x_0\in\{f>0\}$ and $v\in C^1_X(\Om)$ such that $u_\infty-v$ has a strict minimum at $x_0$ in a ball $B_R(x_0)\Subset \{f>0\}$. Thanks to Proposition \ref{c2c1}, there exists a sequence $(v_h)_h\in C^2_X(\Om)$ such that $v_h\to v$ and $Xv_h\to Xv$ uniformly on $\overline{B_R(x_0)}$. Let now $x_h$ be a minimum point of $u_{\infty}-v_h$ on $\overline{B_{\frac{R}{2}}(x_0)}$. 
	 \begin{comment}
	 	We claim that $x_h$ has a subsequence, still denoted by $x_h$, which converges to $x_0$. If it is not the case, assume without loss of generality that $x_h\to x_1\neq x_0$, for some $x_1\in B_R(x_0)$. Then it follows that
	 	$$u_{\infty}(x_h)-v_h(x_h)\leq u_{\infty}(x_0)-v_h(x_0),$$
	 	and so, passing to the limit and thanks to uniform convergence, we get that
	 	$$u_\infty(x_1)-v(x_1)\leq u_\infty(x_0)-v(x_0),$$
	 	which contradicts the strict minimality of $x_0$.
	 \end{comment}
	  Arguing as in the proof of Proposition \ref{infa}, up to a subsequence we can assume that $x_h\to x_0$. Therefore, passing to the limit in 
$$|Xv_h(x_h)|\geq 1,$$
thanks to uniform convergence we get that 
$$|Xv(x_0)|\geq 1.$$
Hence we can work with tests functions in $C^2_X(\Om)$.
 Let $x_0\in\{f>0\}$, $v\in C_X^2(\Om)$ and $R>0$ be such that $u_\infty-v$ has a strict minimum at $x_0$ in $B_R(x_0)\Subset\{f>0\}$.
	 If $u_h:=u_{p_h}$ is a sequence which allows to define $u_\infty$, then we can assume that $u_h$ converges to $u_\infty$ uniformly on $B_R(x_0)$. Let now $x_h$ be a minimum point of $u_h-v$ on $\overline{B_{\frac{R}{2}}(x_0)}$. Arguing as above we can assume that, up to a subsequence, $x_h\to x_0$.
	Let us assume without loss of generality that $p_h>Q$ for any $h\in\mathbb{N}$, where $Q$ is as in Proposition \ref{embeddings}. Then it follows that $u_h\in C^0(\Om)$. Therefore we can apply Proposition \ref{weakvisc} and obtain that $u_h$ is a viscosity solution to \eqref{pp}, i.e.
	\begin{equation}\label{cvb}
		|Xv(x_h)|^{p_h-2}\Delta_X v(x_h)+(p_h-2)|Xv(x_h)|^{p_h-4}Xv(x_h)\cdot X^2v(x_h)\cdot Xv(x_h)^T\leq-f(x_h),
	\end{equation}
	and recalling that $x_h\in\{f>0\}$, we also get $|Xv(x_h)|>0$ for any $h\in\mathbb{N}$.
Assume by contradiction that $|Xv(x_0)|<1$, then there exists $\delta>0$ such that $|Xv(x_0)|\leq 1-2\delta$ and without loss of generality we can also assume that $|Xv(x_h)|\leq 1-\delta$ for any $h\in\mathbb{N}$. Consequently,
\begin{equation}\label{cvb2}
0\leq\lim_{h\to\infty}(p_h-2)|Xv(x_h)|^{p_h-4}\leq\lim_{h\to\infty}(p_h-2)(1-\delta)^{p_h-4}=0.
\end{equation}
Dividing \eqref{cvb} by $(p_h -2)|Xv(x_h)|^{p_h -4}$ and using \eqref{cvb2} we conclude
$$Xv(x_0)\cdot X^2v(x_0)\cdot Xv(x_0)^T=-\infty$$ which contradicts $v\in C_X^2(\Om)$.
\end{proof}

Exploiting the previous result we can prove that variational solutions are $\infty$-superharmonic on the entire domain.

\begin{prop} 
	$u_\infty$ is a viscosity supersolution to the $\infty$-Laplace equation
		\begin{equation*}
		-\Delta_{X,\infty}u_\infty= 0\qquad\text{on }\Om.
	\end{equation*}
\end{prop}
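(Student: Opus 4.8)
The plan is to argue directly, mirroring the approximation scheme used in the proof of Proposition \ref{infa} for the homogeneous case; the only new ingredient is that the sign hypothesis $f \geq 0$ is exactly what makes the argument survive. Fix $x_0 \in \Om$ and, invoking the Remark after Definition \ref{secondcont}, take $v \in C^2_X(\Om)$ and $R > 0$ such that $u_\infty - v$ has a \emph{strict} minimum at $x_0$ in $B_R(x_0) \Subset \Om$. We must show $-\Delta_{X,\infty} v(x_0) \geq 0$. If $Xv(x_0) = 0$, then by \eqref{eq:infXLap} we have $\Delta_{X,\infty} v(x_0) = Xv(x_0) \cdot X^2 v(x_0) \cdot Xv(x_0)^T = 0$ and there is nothing to prove, so from now on we may assume $|Xv(x_0)| > 0$.

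Next I would localize along the approximating sequence. Let $u_h := u_{p_h}$ be a sequence defining $u_\infty$, with $p_h > \max\{4, Q\}$ (so that $u_h \in C(\Om)$ and, by Proposition \ref{weakvisc}, $u_h$ is a viscosity solution, in particular a supersolution, to \eqref{pp}), and with $u_h \to u_\infty$ uniformly on $\overline{B_R(x_0)}$ by the construction of $u_\infty$. Let $x_h$ be a minimum point of $u_h - v$ on $\overline{B_{R/2}(x_0)}$; exactly as in the proof of Proposition \ref{infa}, the strictness of the minimum of $u_\infty - v$ at $x_0$ together with uniform convergence forces $x_h \to x_0$ along a subsequence. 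Since $Xv$ is continuous and $Xv(x_0) \neq 0$, for $h$ large one has $|Xv(x_h)| > 0$, and the viscosity supersolution inequality for $u_h$ at $x_h$, combined with $f \geq 0$, yields
$$-|Xv(x_h)|^{p_h-2}\Delta_X v(x_h) - (p_h - 2)|Xv(x_h)|^{p_h-4}\Delta_{X,\infty} v(x_h) \geq f(x_h) \geq 0.$$

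It then remains to pass to the limit. Dividing the displayed inequality by the positive quantity $(p_h - 2)|Xv(x_h)|^{p_h-4}$ gives
$$-\frac{|Xv(x_h)|^2\, \Delta_X v(x_h)}{p_h - 2} - \Delta_{X,\infty} v(x_h) \geq 0.$$
Since $v \in C^2_X(\Om)$, the functions $|Xv|^2 \Delta_X v$ and $\Delta_{X,\infty} v$ are continuous near $x_0$, so the first term tends to $0$ (bounded numerator, $p_h - 2 \to \infty$) and $\Delta_{X,\infty} v(x_h) \to \Delta_{X,\infty} v(x_0)$; hence $-\Delta_{X,\infty} v(x_0) \geq 0$, which is the desired inequality. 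I do not expect a genuine obstacle here: this is essentially a transcription of Proposition \ref{infa}, and the non-homogeneous datum enters with the favourable sign. The only two points requiring some care are the degenerate case $Xv(x_0) = 0$ (dispatched above) and ensuring that $|Xv(x_h)|$ stays bounded away from $0$ so that the division is legitimate, which follows from continuity once $|Xv(x_0)| > 0$; no $C^1_X$ test functions need be considered, since $-\Delta_{X,\infty}$ is a second-order operator. It is worth recording that, in contrast with the homogeneous case, the reverse (subsolution) inequality generally fails, which is precisely why the limiting problem \eqref{limitproblem} is the hybrid system rather than the plain $\infty$-Laplace equation.
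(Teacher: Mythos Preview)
Your proof is correct and follows the same overall approximation scheme as the paper (localize along $u_{p_h}$, pick approximate minimum points $x_h \to x_0$, apply the viscosity supersolution inequality for the $p_h$-Poisson equation, divide, and pass to the limit). There is, however, a small but worthwhile difference in execution. You use the sign hypothesis $f\geq 0$ \emph{immediately} to discard the right-hand side before taking the limit, obtaining
\[
-\frac{|Xv(x_h)|^{2}\Delta_X v(x_h)}{p_h-2}-\Delta_{X,\infty}v(x_h)\geq 0
\]
directly; the paper instead retains $f(x_h)$ on the right, passes to the limit to get
\[
-\Delta_{X,\infty}v(x_0)\geq \frac{f(x_0)}{\lim_{h\to\infty}(p_h-2)|Xv(x_h)|^{p_h-4}},
\]
and then splits into cases: if $f(x_0)=0$ the conclusion is immediate, while if $x_0\in\{f>0\}$ the paper invokes the preceding Eikonal supersolution proposition (which forces $|Xv(x_0)|\geq 1$) to conclude that the denominator diverges. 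Your shortcut avoids both the case distinction and the dependence on the Eikonal result, and is slightly cleaner; the paper's version, on the other hand, makes explicit the quantitative lower bound $-\Delta_{X,\infty}v(x_0)\geq 0$ as a consequence of the Eikonal estimate when $f(x_0)>0$, tying the two propositions together. Both arguments are valid.
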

\begin{proof}
	Let $x_0\in\Om$, $v\in C^2_X(\Om)$ and $R>0$ be such that $u_\infty-v$ has a strict minimum at $x_0$ in $B_R(x_0)$. Assume without loss of generality that $|Xv(x_0)|\neq 0$. We argue exactly as in the previous proof to get that
	\begin{equation*}
		-Xv(x_0)\cdot X^2v(x_0)\cdot Xv(x_0)^T\geq\frac{f(x_0)}{\lim_{h\to\infty}(p_h-2)|Xv(x_h)|^{p_h-4}}.
	\end{equation*}
If $f(x_0)=0$ the thesis is trivial. If instead $x_0\in\{f>0\}$, we know by the previous proposition that $\lim_{h\to\infty}(p_h-2)|Xv(x_h)|^{p_h-4}=+\infty$, and so the thesis follows.
\end{proof}
Since the notion of viscosity solution is of local nature then proceeding exactly as in the proof of Proposition \ref{infa} the following result holds.
\begin{prop}
	$u_\infty$ is a viscosity subsolution to the $\infty$-Laplace equation
	\begin{equation*}
		-\Delta_{X,\infty}u_\infty =  0\qquad\text{on }\qquad \overline{\{f>0\}}^c.
	\end{equation*}
\end{prop}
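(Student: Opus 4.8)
The plan is to obtain this statement as a localized repetition of the proof of Proposition \ref{infa}. The crucial observation is that $U:=\overline{\{f>0\}}^c$ is open and, since $f\ge 0$ is continuous, $f\equiv 0$ on $U$; thus on $U$ the $p$-Poisson equation \eqref{pp} degenerates to the homogeneous $p$-Laplace equation, exactly the setting of Proposition \ref{infa}. Since being a viscosity subsolution is a local property, it suffices to verify the subsolution inequality at an arbitrary $x_0\in U$. I would fix such an $x_0$ together with $v\in C^2_X(\Om)$ such that $u_\infty-v$ attains a local maximum at $x_0$; by the standard reduction (cf.\ the Remark following Definition \ref{secondcont}) we may assume the maximum is strict, and by openness of $U$ we may pick $R>0$ with $B_R(x_0)\Subset U$ and the maximum attained in $B_R(x_0)$. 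If $Xv(x_0)=0$ the desired inequality $-\Delta_{X,\infty}v(x_0)\le 0$ holds trivially by \eqref{eq:infXLap}, so I may assume $|Xv(x_0)|>0$.

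The remaining steps mirror Proposition \ref{infa} verbatim. Let $u_h:=u_{p_h}$ be a subsequence defining $u_\infty$, with $p_h>Q$ (so $u_h\in C^0(\Om)$ by Proposition \ref{embeddings}) and $u_h\to u_\infty$ uniformly on $\overline{B_R(x_0)}$. Since $f\equiv0$ on $B_R(x_0)\subseteq U$, each $u_h$ restricted to $B_R(x_0)$ is a weak, hence by Proposition \ref{weakvisc} a viscosity, solution of $-\diverx(|Xu_h|^{p_h-2}Xu_h)=0$ on $B_R(x_0)$. Let $x_h$ be a maximum point of $u_h-v$ on $\overline{B_{R/2}(x_0)}$; using the uniform convergence and the strict maximality of $x_0$, up to a subsequence $x_h\to x_0$, so for $h$ large $x_h$ is an interior point of $B_{R/2}(x_0)$ and $|Xv(x_h)|>0$. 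The viscosity subsolution inequality for $u_h$ at $x_h$ then reads
\[
-|Xv(x_h)|^{p_h-2}\Delta_X v(x_h)-(p_h-2)|Xv(x_h)|^{p_h-4}\Delta_{X,\infty}v(x_h)\le 0 ;
\]
dividing by $(p_h-2)|Xv(x_h)|^{p_h-4}>0$ gives
\[
-\frac{|Xv(x_h)|^2\,\Delta_X v(x_h)}{p_h-2}-\Delta_{X,\infty}v(x_h)\le 0 ,
\]
and letting $h\to\infty$, using the boundedness of $Xv$ and $\Delta_X v$ near $x_0$ together with the continuity of $\Delta_{X,\infty}v$, yields $-\Delta_{X,\infty}v(x_0)\le 0$. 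As $x_0\in U$ and $v$ were arbitrary, $u_\infty$ is a viscosity subsolution of $-\Delta_{X,\infty}u_\infty=0$ on $\overline{\{f>0\}}^c$.

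I do not expect a genuine obstacle here: the proof is structurally identical to that of Proposition \ref{infa}. The only points needing care are (i) recording that $f$ vanishes on $\overline{\{f>0\}}^c$, which is what makes the clean limit computation of Proposition \ref{infa} available, and (ii) the localization of the test function to a ball $B_R(x_0)\Subset\overline{\{f>0\}}^c$, legitimate precisely because the viscosity subsolution condition is local. The convergence $x_h\to x_0$ and the passage to the limit in the rescaled inequality are then copied word for word from Proposition \ref{infa}.
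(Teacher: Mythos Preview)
Your proposal is correct and follows exactly the approach indicated in the paper: the authors simply remark that the notion of viscosity solution is local and that one can therefore repeat verbatim the proof of Proposition \ref{infa} on the open set $\overline{\{f>0\}}^c$, where $f\equiv 0$. Your write-up makes explicit precisely the localization and the limit computation that the paper leaves implicit.
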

\begin{comment}
	\begin{proof}
		The proof is identical to the proof of Proposition \ref{infa}.
	\end{proof}
\end{comment}
To conclude our investigation we show that $u_\infty$ is a viscosity subsolution to the Eikonal equation on $\Om$. For doing this we invoke Theorem \ref{aevhorm}, together with the fact that, thanks \eqref{eq:lipcost}, $\|Xu_\infty\|_\infty\leq 1$. 
\begin{prop}
	$u_\infty$ is a viscosity subsolution to the Eikonal equation
	\begin{equation*}
		|Xu_\infty| =1\qquad\text{on }\Om.
	\end{equation*}
\end{prop}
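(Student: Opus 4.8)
The plan is to obtain the statement as a direct consequence of Theorem \ref{aevhorm}, by reading the Eikonal inequality $|Xu_\infty|\leq 1$ as an almost everywhere first-order differential inequality with a convex Hamiltonian.

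First I would record the regularity of $u_\infty$ that has already been established: by \eqref{eq:lipcost} the variational solution satisfies $u_\infty\in\winfx{\Om}$ with $\|Xu_\infty\|_\infty\leq 1$, so in particular $u_\infty\in W^{1,\infty}_{X,loc}(\Om)$ and
\begin{equation*}
|Xu_\infty(x)|-1\leq 0\qquad\text{for a.e. }x\in\Om.
\end{equation*}
Next I would introduce the Hamiltonian $H:\Om\times\Ru\times\Rm\scu\Ru$, $H(x,s,p):=|p|-1$. This function is continuous, and for every fixed $x\in\Om$ and $s\in\Ru$ the map $p\mapsto H(x,s,p)=|p|-1$ is convex, hence quasiconvex. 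Since $X$ satisfies \eqref{hormander} on $\Om_0\supseteq\Om$, it satisfies \eqref{hormander} on $\Om$ as well, so all the hypotheses of Theorem \ref{aevhorm} are met with this choice of $H$ and with $u=u_\infty$, the inequality \eqref{aex} being exactly the displayed almost everywhere bound above.

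Applying Theorem \ref{aevhorm} then yields that $u_\infty$ is a viscosity subsolution to $H(x,u_\infty,Xu_\infty)=0$, that is, to $|Xu_\infty|=1$ on $\Om$, which is the desired conclusion. I do not expect any genuine obstacle in this argument: the only points needing a line of justification are that the chosen $H$ meets the continuity and quasiconvexity requirements of Theorem \ref{aevhorm} and that the H\"ormander condition descends from $\Om_0$ to $\Om$, both of which are immediate. All the substantive work — the lifting scheme \`a la Rothschild–Stein (cf. \cite{RS}) and the passage from an almost everywhere subsolution to a viscosity subsolution via the $(X,N)$-subgradient — has already been carried out once and for all in the proof of Theorem \ref{aevhorm}, so here it need only be invoked.
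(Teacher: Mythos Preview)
Your proposal is correct and matches the paper's own argument essentially verbatim: the paper simply invokes Theorem \ref{aevhorm} together with the bound $\|Xu_\infty\|_\infty\leq 1$ from \eqref{eq:lipcost}, exactly as you do. The choice of Hamiltonian $H(x,s,p)=|p|-1$ and the verification of its quasiconvexity are the only details to fill in, and you have handled them correctly.
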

We summarize our results as follows.
\begin{thm}
	Let $u_\infty$ be a variational solution. Then the following facts hold.
	\begin{itemize}
		\item[$(i)$] $u_\infty$ is a viscosity supersolution to the $\infty$-Laplace equation on $\Om$.
		\item[$(ii)$] $u_\infty$ is a viscosity solution to the $\infty$-Laplace equation on $\overline{\{f>0\}}^c$.
  \item [$(iii)$] $u_\infty$ is a viscosity subsolution to the Eikonal equation on $\Om$.
		\item[$(iv)$] $u_\infty$ is a viscosity solution to the Eikonal equation on $\{f>0\}$.
	\end{itemize}
	\end{thm}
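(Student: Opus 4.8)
The statement is a synthesis of the four propositions established immediately above, so the plan is simply to assemble them, keeping track of the fact that being a viscosity sub/supersolution is a local property and hence passes freely to open subsets. First I would record the two ``supersolution'' facts: that $u_\infty$ is a viscosity supersolution to $-\Delta_{X,\infty}u_\infty=0$ on all of $\Om$, and that $u_\infty$ is a viscosity supersolution to the Eikonal equation $|Xu_\infty|=1$ on $\{f>0\}$. Statement $(i)$ is then immediate, and the Eikonal part of $(iv)$ is half done.

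Next I would invoke the ``subsolution'' facts. We have proved that $u_\infty$ is a viscosity subsolution to $-\Delta_{X,\infty}u_\infty=0$ on $\overline{\{f>0\}}^c$, and that $u_\infty$ is a viscosity subsolution to the Eikonal equation $|Xu_\infty|=1$ on all of $\Om$. Restricting the latter to the open set $\{f>0\}$ and combining with the Eikonal supersolution property on $\{f>0\}$ yields that $u_\infty$ is a viscosity solution to $|Xu_\infty|=1$ there, proving $(iv)$; restricting it to $\Om$ itself is exactly $(iii)$.

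For $(ii)$, I would note that $\overline{\{f>0\}}^c$ is an open subset of $\Om$, so the supersolution property from $(i)$ restricts to it; combined with the subsolution property on $\overline{\{f>0\}}^c$ recorded above, this gives that $u_\infty$ is a viscosity solution to $-\Delta_{X,\infty}u_\infty=0$ on $\overline{\{f>0\}}^c$. The only thing worth spelling out in the write-up is the elementary observation that if $u$ is a viscosity sub/supersolution of a given first- or second-order equation on an open set $U$, then it is one on every open $V\subseteq U$, since every test function admissible on $V$ and every local extremum point in $V$ is, a fortiori, handled by the definition evaluated inside $V$. Since there is genuinely no new analytic content here, I do not anticipate an obstacle; the mild care needed is bookkeeping of which domain each conclusion lives on and the (trivial) verification that the notions localize.
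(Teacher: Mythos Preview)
Your proposal is correct and matches the paper's approach exactly: the theorem is stated as a summary of the four preceding propositions, and the paper gives no separate proof. Your only addition is the explicit remark that viscosity sub/supersolutions localize to open subsets, which is fine to include but is indeed trivial bookkeeping.
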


\section{Appendix} 
\begin{comment}
    In this appendix we remark that one can extend the existence of a $X$-differential for a $C^1_X$ function to the setting of Carnot-Carathéodory spaces, not necessarily satisfying H\"ormander's condition \eqref{hormander}. Let us start by introducing the relevant hypotheses. 
\end{comment}
As already pointed out, Proposition \ref{differenziale} %
can still be proved assuming
\begin{itemize}
	\item [$(D1)$] $(\Om,d_\Om)$ is a Carnot-Carathéodory space,
	\item[$(D2)$] $d_\Om$ is continuous with respect to the Euclidean topology,
	\item [\eqref{lic}] The vectors $X_1(x),\ldots,X_m(x)$ are linearly independent for any $x\in \Om$
\end{itemize}
instead of \eqref{lic} and \eqref{hormander}.
The previous set of conditions embraces many relevant families of vector fields, such as for instance Carnot Groups. However, when considering the two sets of hypotheses given by the H\"ormander condition and $(D1),(D2)$, \eqref{lic}, one can show that neither of the two implies the other. Indeed, from one hand it is well known that the Grushin plane, i.e. $\Ru^2$ equipped with the Carnot-Carathéodory distance generated by the vector fields
\begin{equation*}
    X=\frac{\partial}{\partial x}\qquad Y=x\frac{\partial}{\partial y},
\end{equation*}
satisfies the H\"ormander condition, while $X$ and $Y$ are clearly linearly dependent in $\{(0,y)\ |\ y\in\mathbb{R}\}$. On the other hand, there are examples of (even smooth) families of vector fields satisfying $(D1),(D2)$, \eqref{lic} which does not satisfies the H\"ormander condition.
    Let us consider the two linearly independent vector fields $X,Y$ defined on $\Ru^3$ by
    \begin{equation*}         X=\frac{\partial}{\partial x}\qquad Y=\frac{\partial}{\partial y}+\varphi(x)\frac{\partial}{\partial z},
    \end{equation*}
    where $\varphi(x):=\psi(x)+\psi(-x)$ and $\psi:\Ru\to\Ru$ is defined by
   \begin{equation*}
       \psi(x)=
		\begin{cases}e^{-\frac{1}{x}}&\text{ if  }x>0\\
			0 &\text{ otherwise}.
		\end{cases}
   \end{equation*}
   Since $\varphi^{(k)}(0)=0$ for any $k\in\mathbb{N}$, it is easy to see that
   \begin{equation*}
       [X,[\ldots,[X,Y]\ldots](0,y,z)=[Y,[\ldots,[X,Y]\ldots](0,y,z)=0
   \end{equation*}
   for any $y,z\in\Ru$ so $X,Y$ do not satisfy the H\"ormander condition in $\{(0,y,z)\ |\ y,z\in\mathbb{R}\}$.

 It is not difficult to show that they induces a Carnot-Carathéodory distance $d$ on $\Ru^3$, and that the identity map
   $$\emph{Id}:(\Ru^3,d_e)\scu(\Ru^3,d)$$
   is continuous.  Indeed, let $A=(x,y,z)$ and $B=(x_1,y_1,z_1)$ in $\mathbb{R}^3$ we construct an horizontal curve joining them whose horizontal length tends to zero as $A$ tends to $B$ in the Euclidean topology. First, notice that moving along the $X$ direction the induced Carnot-Carath\'eodory distance is comparable with the Euclidean one. Hence, without loss of generality, we can assume that $x=x_1=0$. Moreover, since $Y=\frac{\partial}{\partial y}$ on $\{x=0\}$, then moving along the $Y$ direction inside $\{x=0\}$ the induced Carnot-Carath\'eodory distance is comparable with the Euclidean one. Hence we assume that $y_1=y$. The last step is to join $(0,y,z)$ and $(0,y, z_1)$. We assume, without loss of generality, that $z_1>z$. 
   Let us set 
   $$\delta:=-\frac{1}{\log(\sqrt{z_1-z})}$$
   then $\delta\to 0^+$ as $z_1\to z$. 
   Let us define the curves $\gamma_1,\ldots,\gamma_4:[0,1]\to\Ru^3$ by
   $$\gamma_1(t)=(0,y,z)+t(\delta,0,0),$$
   $$\gamma_2(t)=(\delta,y,z)+t\left(0,\frac{z_1-z}{\varphi(\delta)},z_1-z\right),$$
   $$\gamma_3(t)=\left(\delta,y+\frac{z-z_1}{\varphi(\delta)},z_1\right)+t(-\delta,0,0)$$
   and
   $$\gamma_4(t)=\left(0,y+\frac{z_1-z}{\varphi(\delta)},z_1\right)+t\left(0,\frac{z-z_1}{\varphi(\delta)},0\right)$$
   it is easy to see that they are horizontal and that they connect $(0,y,z)$ and $(0,y,z_1)$. Moreover, a quick computation shows that 
   \begin{equation*}
       d((0,y,z),(0,y,z_1))\leq 2\delta+\frac{z_1-z}{\varphi(\delta)}=-\frac{2}{\log(\sqrt{z_1-z})}+\sqrt{z_1-z}.
   \end{equation*}
   As the right hand side tends to zero as $z_1\to z$, the conclusion follows. 

\end{document}